\numberwithin{equation}{section}
\definecolor{myred}{rgb}{0.75,0,0}
\definecolor{mygreen}{rgb}{0,0.5,0}
\definecolor{myblue}{rgb}{0,0,0.65}
    \def\AM{{\mathbb{A}}}
  \def\bg{{\mathfrak b}}  
    \def\CM{{\mathbb{C}}}
    \def\DM{{\mathbb{D}}}
    \def\EM{{\mathbb{E}}}
    \def\FM{{\mathbb{F}}}
  \def\gg{{\mathfrak g}}  \def\GM{{\mathbb{G}}}
    \def\KM{{\mathbb{K}}}
  \def\lg{{\mathfrak l}}  \def\LM{{\mathbb{L}}}
    \def\OM{{\mathbb{O}}}
\def\PG{{\mathfrak P}}    \def\PM{{\mathbb{P}}}
    \def\QM{{\mathbb{Q}}}
\def\SG{{\mathfrak S}}  \def\sg{{\mathfrak s}}  
  \def\tg{{\mathfrak t}}  
  \def\ug{{\mathfrak u}}
\def\XG{{\mathfrak X}}    
    \def\ZM{{\mathbb{Z}}}
    \def\AC{{\mathcal{A}}}
    \def\BC{{\mathcal{B}}}
    \def\CC{{\mathcal{C}}}
    \def\DC{{\mathcal{D}}}
    \def\FC{{\mathcal{F}}}
    \def\HC{{\mathcal{H}}}
    \def\KC{{\mathcal{K}}}
    \def\LC{{\mathcal{L}}}
    \def\MC{{\mathcal{M}}}
    \def\NC{{\mathcal{N}}}
    \def\OC{{\mathcal{O}}}
    \def\RC{{\mathcal{R}}}
    \def\SC{{\mathcal{S}}}
    \def\TC{{\mathcal{T}}}
\def\a{\alpha}
\def\b{\beta}
\def\G{\Gamma}
\def\D{\Delta}
\def\e{\varepsilon}
\def\l{\lambda}
\def\s{\sigma}
\newcommand{\nc}{\newcommand} \newcommand{\renc}{\renewcommand}
\newcommand{\rdots}{\mathinner{ \mkern1mu\raise1pt\hbox{.}
    \mkern2mu\raise4pt\hbox{.}
    \mkern2mu\raise7pt\vbox{\kern7pt\hbox{.}}\mkern1mu}}
\def\mini{{\mathrm{min}}}
\def\pr{{\mathrm{pr}}}
\newcommand{\elem}[1]{\stackrel{#1}{\longto}}
\def\wh{\widehat}
\def\ov{\overline}
\def\un{\underline}
\def\p{{}^p}
\def\to{\rightarrow}
\def\longto{\longrightarrow}
\def\injto{\hookrightarrow}
\nc{\triright}{\stackrel{[1]}{\to}}
\nc{\longtriright}{\stackrel{[1]}{\longto}}
\nc{\Br}{\mathcal{B}}
\nc{\HotRR}{{}_R\mathcal{K}_R}
\nc{\HotR}{\mathcal{K}_R}
\nc{\excise}[1]{}
\nc{\defect}{\text{df}}
\nc{\h}[1]{\underline{H}_{#1}}
\nc{\Ga}{\mathbb{G}_a} % additive group
\nc{\Gm}{\mathbb{G}_m} % multiplicative group
\nc{\Perv}{{\mathbf{P}}}
\nc{\IH}{{\mathrm{IH}}}
\nc{\gl}{{\mathfrak{gl}}}
\renc{\sl}{{\mathfrak{sl}}}
\renc{\sp}{{\mathfrak{sp}}}
\nc{\HBM}{H^{BM}}
\newtheorem{theorem}{Theorem}[section]
\newtheorem{lemma}[theorem]{Lemma}
\newtheorem{proposition}[theorem]{Proposition}
\newtheorem{corollary}[theorem]{Corollary}
\theoremstyle{definition}
\newtheorem{definition}[theorem]{Definition}
\theoremstyle{remark}
\def\op{{\mathrm{op}}}
\def\Xti{{\tilde{X}}}
\def\NCt{{\tilde{\mathcal{N}}}}
\def\p{{}^p}
\def\pp{{}^{p_+}\!}
\def\0{{}^0\!}
\DeclareMathOperator{\Ad}{{\mathrm{Ad}}}
\DeclareMathOperator{\codim}{codim}
\DeclareMathOperator{\End}{{\mathrm{End}}}
\DeclareMathOperator{\HH}{H}
\DeclareMathOperator{\ic}{{{\mathcal {IC}}}}
\DeclareMathOperator{\im}{{\mathrm{Im}}}
\DeclareMathOperator{\Irr}{{\mathrm{Irr}}}
\DeclareMathOperator{\nat}{{\mathrm{nat}}}
\DeclareMathOperator{\Rad}{{\mathrm{Rad}}}
\DeclareMathOperator{\res}{{\mathrm{res}}}
\DeclareMathOperator{\RHOM}{R\underline{Hom}}
\DeclareMathOperator{\Rg}{R\G}
\DeclareMathOperator{\Sh}{{\mathrm{Sh}}}
\DeclareMathOperator{\Soc}{Soc}
\DeclareMathOperator{\Spec}{{\mathrm{Spec}}}
\DeclareMathOperator{\Tr}{{\mathrm{Tr}}}
\DeclareMathOperator{\tr}{{\mathrm{tr}}}
\DeclareMathOperator{\Top}{Top}
\def\rs{{\mathrm{rs}}}
\def\mini{{\mathrm{min}}}
\def\triv{{\mathrm{triv}}}
\def\reg{{\mathrm{reg}}}
\def\subreg{{\mathrm{subreg}}}
\def\Sing{{\mathrm{Sing}}}
\def\boxempty{\square}
\def\impsi{\PG^0}
\def\Imp{\Longrightarrow}
\def\to{\rightarrow}
\def\longto{\longrightarrow}
\def\injto{\hookrightarrow}
\def\lamh{{\hat{\lambda}}}
\def\muh{{\hat{\mu}}}
\def\nuh{{\hat{\nu}}}
\def\etah{{\hat{\eta}}}
\def\zeth{{\hat{\zeta}}}
\def\NCh{{\hat{\mathcal{N}}}}
\begin{document}

%\shorthandoff{;:!?}

\title{Modular Springer correspondence, decomposition matrices and basic sets}
\author{Daniel Juteau}
\address{LMNO, Universit\'e de Caen Basse-Normandie, CNRS, BP 5186, 14032 Caen, France}
\email{daniel.juteau@unicaen.fr}
\urladdr{http://www.math.unicaen.fr/~juteau}
\thanks{The author was supported by ANR Grants No.~ANR-09-JCJC-0102-01 and ANR-13-BS01-0001-01.}

%\date{Received: date / Revised version: date}

\maketitle

\begin{abstract}
The Springer correspondence makes a link between the
characters of a Weyl group and the geometry of the
nilpotent cone of the corresponding semisimple Lie algebra.
In this article, we consider a modular version of the theory, and show that the
decomposition numbers of a Weyl group are particular cases of decomposition numbers for equivariant
perverse sheaves on the nilpotent cone. We give some decomposition
numbers which can be obtained geometrically. In the case of the symmetric group, we show that
James' row and column removal rule for the symmetric group can be
derived from a smooth equivalence between nilpotent singularities
proved by Kraft and Procesi. We give the complete structure of the Springer and Grothendieck sheaves
in the case of $SL_2$. Finally, we determine explicitly the modular Springer correspondence for
exceptional types.
\end{abstract}

\tableofcontents

\section{Introduction}
\label{intro}

\subsection{Modular Springer correspondence}

Let $G$ be a connected reductive group over a finite field $\FM_q$ of characteristic $p$,
with Frobenius endomorphism $F$.
In 1976, Springer defined a correspondence making a link between the
irreducible ordinary representations of the Weyl group $W$ of $G$ (over a field of
characteristic zero $\KM$) and the nilpotent orbits in the semi-simple
Lie algebra $\gg$ of $G$ \cite{SPRTRIG}, under some restrictions on $p$ and $q$. More precisely,
he constructed these representations in the top cohomology of some
varieties associated to the different nilpotent orbits, the Springer
fibers, which are the fibers of Springer's resolution of the nilpotent
cone. Each irreducible representation can be associated with a
nilpotent orbit and an irreducible $G$-equivariant local system on this orbit.
Springer's original approach was in terms of trigonometric sums, which
are (up to a scalar) Fourier transforms of characteristic functions of
$G^F$-orbits on $\gg^F$. This required to consider varieties over a
base field of characteristic $p$, and $\ell$-adic cohomology.

Then many other approaches to Springer correspondence were discovered.
For example, Kazhdan and Lusztig found a topological approach
\cite{KL}, and Slodowy constructed Springer representations by
monodromy \cite{SLO1}.
Links between different constructions were established, as in 
Hotta's work \cite{HOT}.
In the early 1980's, the emergence of intersection cohomology,
discovered by Goresky and MacPherson \cite{GM-IH1,GM-IH2}, and the
theory of perverse sheaves developed by Be\u{\i}linson, Bernstein,
Deligne and Gabber \cite{BBD}, allowed Lusztig, Borho and MacPherson
to reinterpret the correspondence with these new tools
\cite{LusGreen,BM}.
Later, Hotta and Kashiwara found an approach using a Fourier transform
for $\DC$-modules, when the base field is $\CM$ \cite{HK}, and
Brylinski adapted this idea to the case of a base field of
characteristic $p$, using a Fourier-Deligne transform for perverse
sheaves in $\ell$-adic cohomology \cite{BRY}.

Let us explain briefly the construction. Let $\KM$ be a finite extension of
$\QM_\ell$, with $\ell\neq p$. We call \emph{ordinary Springer sheaf}
the (derived) direct image of the constant perverse sheaf $\KM$
(suitably shifted) under the Springer resolution $\pi_\NC$ of the
nilpotent cone. By the Decomposition Theorem \cite{BBD} and the fact
that $\pi_\NC$ is semi-small, it is a semi-simple perverse sheaf.
Since it is also $G$-equivariant, its isotypic components are
parametrized by some pairs $(x,\rho)$, up to $G$-conjugation, where
$x$ is a nilpotent element and $\rho$ is an irreducible character of
the finite group $A_G(x) = C_G(x)/C_G(x)^0$. In both approaches using
perverse sheaves, one can show that the endomorphism algebra of $\KM
\KC_\NC$ is isomorphic to the group algebra $\KM W$. So the isotypic
components are also parametrized by the irreducible characters of $\KM
W$, and this defines the correspondence.

In the Lusztig-Borho-MacPherson approach, one uses a functor of
restriction to the nilpotent cone. Actually, one may give a direct
construction of the simple perverse sheaf associated
to an irreducible representation $\chi$ of $W$ \cite{LusGreen}. 
One takes the intersection cohomology complex of the local system on
the regular semisimple elements associated to $\chi$, and restricts it
to the nilpotent cone. Up to a shift by the rank of $G$, this is a simple
$G$-equivariant perverse sheaf on $\NC$, so it is associated with an
intersection cohomology datum consisting of a pair $(x,\rho)$ where
$x$ is a nilpotent element and $\rho$ is an irreducible representation
of $\KM A_G(x)$ (corresponding to an irreducible local system on the
orbit of $x$), up to $G$-conjugation, where $A_G(x)$ is the finite
group $C_G(x)/C_G(x)^0$ of components of the centralizer of $x$ in
$G$. This gives a correspondence which differs from Springer's original
parametrization by the sign character of $W$ \cite[Proposition 17.7]{SHOJI}. The proof that we get a
simple perverse sheaf on $\NC$ by this restriction process really
uses the Decomposition Theorem, and the result fails with characteristic
$\ell$ coefficients.

On the other hand, the Fourier-Deligne transform still makes sense with
characteristic $\ell$ coefficients, and sends a simple perverse sheaf to a simple
perverse sheaf. We can still associate
simple perverse sheaves to simple modules: if $\phi$ is a modular character of
$W$, one takes the Fourier transform of the intersection cohomology
complex of the local system on the regular semisimple elements
associated to $\phi$, which turns out to be supported by the nilpotent
cone, as in characteristic zero. For the proof, we
have to deal with non semi-simple perverse sheaves at some point, but
it still works.

\subsection{Decomposition matrices}
\label{intro:dec mat}

For a moment, suppose $W$ is any finite group, and choose a prime
number $\ell$. We also choose a sufficiently large finite extension
$\KM$ of $\QM_\ell$, with valuation ring $\OM$ and residue field
$\FM$. In representation theory, a triple such as $(\KM,\OM,\FM)$ is called
an $\ell$-modular system. If $E$ is a simple $\KM W$-module, we may
choose a $W$-stable $\OM$-lattice $E_\OM$ in $E$, and form the $\FM
W$-module $\FM \otimes_\OM E_\OM$. Though its isomorphism class
depends on the choice of the lattice, it does have a well-defined
class in the Grothendieck group $K_0(\FM W)$. Thus, for each simple
$\FM W$-module $F$, we have a well-defined multiplicity
$d^W_{E,F} := [\FM \otimes_\OM E_\OM : F]$, which is called a
decomposition number. The matrix
\[
D^W := (d^W_{E,F})_{E\in \Irr \KM W,\ F \in \Irr \FM W}
\]
is called the ($\ell$-modular) decomposition matrix of $W$. We refer
the reader to \cite[Partie III]{Serre} for more information.

When the ordinary characters are known, the determination of $D^W$ is
equivalent to the determination of the modular characters.
Modular representations of finite groups are much less known than
ordinary ones. For example, for the symmetric group $\SG_n$, we know
the ordinary characters, but in characteristic $\ell$, even if we know
how to parametrize the simple modules \cite{JAMESirr}, we do not even know their
dimensions explicitly in general.

When $W$ is a Weyl group (as before), we will see that the
determination of $D^W$ can be translated into a geometrical or
topological problem, thanks to the Springer correspondence.
Indeed, one can define a decomposition matrix $D^\NC$ for the
$G$-equivariant perverse sheaves on the nilpotent cone, just
as in representation theory \cite{decperv}, and we will see that $D^W$
is a submatrix of $D^\NC$. Thus to determine $D^W$, it would be enough
to determine explicitly the modular Springer correspondence, and to
compute the stalks of the modular intersection cohomology
complexes. While the first task has now been achieved by \cite{JLS, AHJRII} and the present paper,
the second one is probably extremely difficult, however one can
hope to prove some qualitative results with these methods.

\subsection{Some explicit results}

The nilpotent orbits are naturally ordered by the inclusion of their
closures, the smallest orbit being the trivial orbit $\OC_\triv =
\{0\}$, and the largest orbit being the regular orbit $\OC_\reg$.
Suppose $G$ is simple. Then there is a unique minimal non-trivial
orbit $\OC_\mini$, and a unique maximal non-regular orbit, the
subregular orbit $\OC_\subreg$. For $G$ of adjoint type, we computed
the decomposition numbers corresponding to the regular and subregular
orbits, and to the minimal and trivial orbits in \cite{cohmin,decperv}. 
These results will be recalled.

A result of Kraft and Procesi \cite{KP3} implies that the special
pieces of the nilpotent cone are $\FM_\ell$-smooth as soon as $\ell
\neq 2$, in classical types. We will deduce that some decomposition
numbers, involving a special orbit and a smaller orbit in the same
piece, are zero for $\ell \neq 2$.

Then we focus on type $A$. We determine explicitly the modular
Springer correspondence in this case, and we show that James' row and
column removal rule \cite{JAMESdecIII} is a consequence of a smooth
equivalence of nilpotent singularities obtained by Kraft and Procesi
\cite{KP1}.

In the case of $G = SL_2$, we give the complete structure of the Springer
and Grothendieck sheaves.

Finally, we determine explicitly the modular Springer correspondence for exceptional
Weyl groups, using their decomposition matrices.

\subsection{Related work}
This paper contains the results of \cite[Chapters 6 and 8]{Juteau:thesis} (finally!),
the results in Subsection \ref{subsec:basicsets} which were obtained at MSRI in 2008 during
the program ``Representation Theory of Finite Groups and Related Topics'',
and the explicit results for exceptional Weyl groups.

Several papers related to modular Springer theory have appeared in the
mean time. In \cite{Treumann:springer}, Rossmann's topological approach is followed to produce
some version of the induction theorem for modular Springer representations.

In \cite{Mautner:schur}, Mautner proves that, for $n \geq
d$, the category of $GL_d$-equivariant perverse sheaves with $\EM$
coefficients on the nilpotent cone of $\gg\lg_d$ is
equivalent to the category of polynomial representations of $GL_n$
over $\EM$ of degree $d$, using Lusztig's embedding of the nilpotent
cone in the affine Grassmannian \cite{LusGreen}, a map in the other
direction at the level of stacks, and the geometric Satake
correspondence \cite{MV}. Using the modular Springer correspondence,
he then provides a geometric proof of Schur-Weyl duality, the Schur
functor being described by homomorphisms from the Springer
sheaf. 

In \cite{Achar-Mautner:ringel}, Achar and Mautner study the functor
from the equivariant derived category of the nilpotent cone to itself
given by Fourier transform on $\gg$ followed by restriction to
$\NC$. They prove that it is an equivalence. In the case of
$GL_n$ it is a geometric version of Ringel duality, exchanging tilting
sheaves (which are parity sheaves \cite{JMW}) and projective
sheaves. Moreover, they investigate examples in types $B_2$ and $G_2$.

There is now a modular generalized Springer correspondence
\cite{AHJRI, AHJRII, AHJRIII}. It is always defined (except possibly
for type $E_8$ in characteristic $2$); we can classify cuspidal pairs completely for classical
types, and partially for exceptional types; we can determine the generalized correspondence explicitly for $SL_n$
in any characteristic, for classical types in characteristic $2$, and partially in exceptional types.
In \cite{JLS}, we determine the non-generalized modular Springer correspondence for classical groups
in odd characteristic. This last work uses basic sets for several orders (the Springer order,
and some combinatorial orders which are known to be compatible with decomposition numbers), which
justifies the introduction of the notion of ``basic set datum'', where the order is part of the data.

The fact that the Fourier transform is an auto-equivalence makes it easy to show that the endomorphism algebra
of the Springer sheaf is again the group algebra of the Weyl group.
See \ref{subsubsec:res} for a discussion of the approach by restriction to the nilpotent cone (either using
the fact that both constructions differ by the sign character \cite{AHJR}, or directly with the Borel-Moore homology of the Steinberg variety \cite{Riche:res}).

We should also mention \cite{AHR} which, apart from being a continuation of \cite{AH}, contains
many compatibilities, notably of induction and restriction functors with respect to a modular
Springer functor defined in terms of restriction to the nilpotent cone. By \cite{AHJR},
those compatibilities are valid also for the Fourier transform construction of the present paper.

Finally, the need to better understand the geometry of nilpotent cones to do further explicit calculation of stalks of modular
IC sheaves was one of the motivations for \cite{FJLS}. We describe generic singularities of nilpotent orbit closures
for exceptional types (for classical types, this was done in \cite{KP1,KP2}). Most of the time, we still find
simple and minimal singularities, extending the applicability of the results in Section \ref{sec:dec}.

\subsection{Outline}

In Section \ref{sec:preliminaries}, we recall some facts used in the sequel, mostly about modular perverse sheaves.
In Section \ref{sec:fourier}, we recall the definition and properties
of the Fourier-Deligne transform that we need.
In Section \ref{sec:springer}, we recall the background for Springer
correspondence and we explain why we can still define such a
correspondence in the modular case, using the Fourier-Deligne transform.
In Section \ref{sec:decomposition}, we show that the decomposition
matrix of a Weyl group is part of the decomposition matrix for
equivariant perverse sheaves on the nilpotent cone, and that this defines a ``Springer basic set'' for the Weyl group.
In Section \ref{sec:dec}, we give some decomposition numbers which can
be obtained geometrically, using previous work.
Section \ref{sec:gl_n} is devoted to the study of modular Springer correspondence for the general linear group,
including the row and column removal rule.
In Section \ref{sec:sl2}, we give explicit calculations
of the structure of the Springer and Grothendieck sheaves for $G = SL_2$.
Finally, in Section \ref{sec:exc}, we determine explicitly the modular Springer correspondence
by displaying the decomposition matrices of the exceptional Weyl groups in the order given by Springer correspondence.

\subsection*{Acknowledgements}
I sincerely thank my former supervisors Cédric Bonnafé and Raphaël Rouquier for giving me such a nice subject.
Besides making great contributions to modular Springer theory, my collaborators
Pramod Achar, Anthony Henderson, Cédric Lecouvey, Simon Riche and Karine Sorlin 
are to be praised for their (big) patience while I was not submitting this paper.
I am very grateful to Hyohe Miyachi for recomputing the decomposition matrices of the exceptional Weyl groups in GAP3.
Thanks are also due to Jean Michel for his help with GAP3.
Finally, I thank Olivier Dudas for his careful reading and his friendly encouragements.

\section{Preliminaries}
\label{sec:preliminaries}

Let $k$ be a finite field $\FM_q$ or its algebraic closure.
We will consider only $k$-varieties (separated $k$-schemes of
finite type), and morphisms of $k$-schemes. We call $p$ the
characteristic of $k$.

Recall from \ref{intro:dec mat} that $(\KM,\OM,\FM)$ is an
$\ell$-modular system. We assume $\ell$ is different from $p$.
Let $X$ be a $k$-variety. For $\EM = \KM, \OM, \FM$, we have an
abelian category of constructible $\EM$-sheaves $\Sh(X,\EM)$. It
contains the full subcategory of $\EM$-local systems. For $X$
connected, the latter is equivalent to the category of continuous
representations of the fundamental group of $X$. We denote by
$D^b_c(X,\EM)$ the bounded constructible derived category of
$\EM$-sheaves as defined by Deligne. In this setting we have
Grothendieck's six operations: the tensor product $- \otimes_\EM^L -$,
the internal Hom functor $\RHOM(-,-)$, and, for $f : X \to Y$, the
pairs of adjoint functors $(f^*,f_*)$ and $(f_!,f^!)$ (all of these
denote functors between derived categories). We denote by $\DM_X$ the
Grothendieck-Verdier duality functor.
We also have functors of extension of scalars
$\KM(-) := \KM \otimes_\OM (-)$ and of modular reduction 
$\FM(-) := \FM \otimes^L_\OM (-)$.

The category $\p\MC(X,\EM)$ of $\EM$-perverse sheaves on $X$ is the
heart of the perverse $t$-structure on $D^b_c(X,\EM)$ for the middle
perversity \cite{BBD}. In the case where $\EM$ is a field, this
$t$-structure is stable by the duality, but for $\EM = \OM$ the
duality exchanges this standard perverse $t$-structure  with a dual
perverse $t$-structure with heart $\pp\MC(X,\EM)$. Over a point,
the dual perverse sheaves are complexes of $\OM$-modules with torsion-free $H^0$
and torsion $H^1$ (both finitely generated), all other cohomology
groups being zero.
We have perverse cohomology functors
$\p H^m:D^b_c(X,\EM)\to\p\MC(X,\EM)$, and also
$\pp H^m:D^b_c(X,\EM)\to\pp\MC(X,\EM)$ in case $\EM = \OM$.
Distinguished triangles give rise to long exact sequences in perverse
cohomology. 
We refer to \cite[\S  3.3]{BBD} and \cite{decperv}.

\subsection{About intermediate extensions}
\label{subsec:IC}

Assume we are in a recollement situation \cite[\S 1.4]{BBD}.
So we have three $t$-categories (i.e. triangulated
categories endowed with $t$-structures) $\DC$,
$\DC_U$ and $\DC_F$ related by six gluing functors, as follows:
\begin{equation}\label{eq:recollement setup}
\xymatrix{
\DC_F
\ar[r]^{i_*}
&
\DC
\ar@<-3ex>[l]_{i^*}
\ar@<3ex>[l]_{i^!}
\ar[r]^{j^*}
&
\DC_U
\ar@<-3ex>[l]_{j_!}
\ar@<3ex>[l]_{j_*}
}
\end{equation}
satisfying certain axioms.
The $t$-structure on $\DC$ is deduced from the $t$-structures on
$\DC_U$ and $\DC_F$ by the recollement procedure. The hearts are
denoted by $\CC$, $\CC_U$ and $\CC_F$.
If $T$ is any of the six gluing functors, we denote by $\p T$
the corresponding functor between the hearts, obtained as the
composition of the inclusion of the heart of the source $t$-category,
followed by $T$, and then by the perverse cohomology functor $\p H^0$
of the target $t$-category. Recall that $i_*$ and $j^*$ are $t$-exact
and give rise to exact functors between the hearts ($i_*$ is the
inclusion of a thick subcategory, while $j^*$ is the corresponding
quotient functor).

For example $\p i_* \p i^!$ is the functor
``largest subobject in $\p i_*\CC_F$'', while  $\p i_* \p i^*$ is the
functor ``largest quotient in $\p i_*\CC_F$''. Note that we write $\p i_*$
to indicate that we consider the functor between the hearts, although it is
common to drop the $p$ from the notation because the functor $i_*$ is
already $t$-exact. We will occasionally make this abuse of notation. The
same remark applies to the functor $\p j^*$.

It follows from the axioms that there is a canonical morphism
$j_! \to j_*$, and the latter factors
through a canonical morphism $\p j_! \to \p j_*$. The intermediate
extension functor $\p j_{!*}$ is defined as the image of this
morphism. Over $\OM$, there are also versions for $p_+$ of all those
functors.

If $\SC$ (resp. $\SC_U$, $\SC_F$) denotes the set of (isomorphisms
classes of) simple objects in $\CC$ (resp. $\CC_U$, $\CC_F$),
then we have $\SC = \p j_{!*} \SC_U \cup \p i_* \SC_F$.

The typical recollement situation arises when one
considers an open immersion $j:U\to X$ with closed complement 
$i:F\to X$. The perverse $t$-structure on a stratified space is
defined inductively by gluing shifts of natural $t$-structures on each
stratum.

One bad feature of the intermediate extension functor $\p j_{!*}$ is
that it is not exact. It may already happen with perverse sheaves in
characteristic zero, however it is easy not to be aware of this
problem because one usually applies this functor to a semisimple
object. But with perverse sheaves with $\FM$ coefficients, the problem
arises very often, and already in the Springer correspondence for
$GL_2$ in characteristic $2$.
Nevertheless, the functor of intermediate extension enjoys some nice
properties, that we will now recall. We refer to \cite{decperv} for
more details, proofs and references.

\begin{proposition}
The intermediate extension functor preserves monomorphisms and
epimorphisms.
\end{proposition}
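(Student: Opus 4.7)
The plan is to exploit the factorization $\p j_! \twoheadrightarrow \p j_{!*} \hookrightarrow \p j_*$ together with the one-sided $t$-exactness of the adjoints $j_!$ and $j_*$, which passes to exactness properties on the hearts. Namely, since $j_!$ is $t$-right exact and $j_*$ is $t$-left exact, the induced functors $\p j_!$ and $\p j_*$ on perverse hearts are right exact and left exact, respectively. In particular $\p j_!$ preserves epimorphisms and $\p j_*$ preserves monomorphisms. The intermediate extension, defined as the image of the canonical morphism $\p j_! \to \p j_*$, is both a quotient of $\p j_!$ and a subobject of $\p j_*$, and this will be the essential leverage.

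First I would handle monomorphisms. Let $f:A \hookrightarrow B$ be a monomorphism in $\CC_U$. Applying $\p j_*$, which is left exact, I get a monomorphism $\p j_* f : \p j_* A \hookrightarrow \p j_* B$. Naturality of the canonical map $\p j_! \to \p j_*$ (and hence of its image factorization) produces a commutative square
\[
\xymatrix{
\p j_{!*} A \ar[r] \ar@{^{(}->}[d] & \p j_{!*} B \ar@{^{(}->}[d] \\
\p j_* A \ar@{^{(}->}[r] & \p j_* B.
}
\]
The composition $\p j_{!*} A \to \p j_{!*} B \hookrightarrow \p j_* B$ equals the monomorphism $\p j_{!*} A \hookrightarrow \p j_* A \hookrightarrow \p j_* B$, so it is itself a monomorphism. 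A morphism whose post-composition with another morphism is mono must itself be mono, so $\p j_{!*} f$ is a monomorphism.

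For epimorphisms, the argument is dual: given $f : A \twoheadrightarrow B$ in $\CC_U$, right exactness of $\p j_!$ yields an epimorphism $\p j_! A \twoheadrightarrow \p j_! B$, and naturality gives a commutative square
\[
\xymatrix{
\p j_! A \ar@{->>}[r] \ar@{->>}[d] & \p j_! B \ar@{->>}[d] \\
\p j_{!*} A \ar[r] & \p j_{!*} B.
}
\]
The composition $\p j_! A \twoheadrightarrow \p j_{!*} A \to \p j_{!*} B$ equals the epimorphism $\p j_! A \twoheadrightarrow \p j_! B \twoheadrightarrow \p j_{!*} B$, so it is an epimorphism, and therefore $\p j_{!*} f$ is an epimorphism.

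The only mildly delicate point—and really the only place where something could go wrong—is the naturality of the epi-mono factorization of $\p j_! \to \p j_*$ in the variable on $U$, but this is automatic in any abelian category once one knows that $\p j_!$ and $\p j_*$ are functors and that the comparison $\p j_! \to \p j_*$ is a natural transformation. No use of the recollement axioms beyond the existence of the adjoints and the resulting $t$-exactness properties is required, so the argument applies uniformly to the $\KM$, $\FM$, and $\OM$ coefficient cases (and, over $\OM$, equally well to the $p_+$ version of the functor).
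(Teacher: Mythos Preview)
Your argument is correct and is precisely the standard proof: factor $\p j_{!*}$ through the mono into $\p j_*$ (resp.\ the epi from $\p j_!$), use left exactness of $\p j_*$ (resp.\ right exactness of $\p j_!$), and cancel. The paper does not spell out a proof here but refers to \cite{decperv}, where the same argument appears; so your approach matches.
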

Using the preceding result one can show the following.
\begin{proposition}
\label{prop:top socle}
Let $A$ be an object of $\CC_U$. Then we have
\begin{gather*}
\Soc \p j_{!*} A
\simeq \Soc \p j_* A
\simeq \p j_{!*} \Soc A,\\
\Top \p j_{!*} A
\simeq \Top \p j_! A
\simeq \p j_{!*} \Top A.
\end{gather*}
\end{proposition}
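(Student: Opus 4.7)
The plan is to prove the socle statement first; the top statement will follow by a completely parallel argument using $\p j_!$ in place of $\p j_*$ (and epimorphisms in place of monomorphisms). Throughout, write $j^* = \p j^*$, which is $t$-exact, and recall that $j^* \p j_{!*} A = j^* \p j_* A = j^* \p j_! A = A$, and that $j^* \p i_* = 0$.

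First I would use the recollement adjunctions $(\p j^*, \p j_*)$ and $(\p i^*, \p i_*)$ to eliminate the $\p i_* \SC_F$-contributions. For any simple $S \in \SC_F$, we have $\Hom(\p i_* S, \p j_* A) = \Hom(j^* \p i_* S, A) = 0$. Therefore every simple subobject of $\p j_* A$ is of the form $\p j_{!*} T$ with $T \in \SC_U$, and hence the same is true for $\p j_{!*} A$ since it injects into $\p j_* A$.

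Second I would identify exactly which simples appear. If $\p j_{!*} T \hookrightarrow \p j_* A$ with $T$ simple in $\CC_U$, applying the exact functor $j^*$ yields $T \hookrightarrow A$, so $T$ embeds in $\Soc A$. Conversely, if $T \hookrightarrow \Soc A \hookrightarrow A$, then by the preceding proposition $\p j_{!*}$ preserves monomorphisms, so $\p j_{!*} T \hookrightarrow \p j_{!*} A \hookrightarrow \p j_* A$. Since $\p j_{!*}$ sends simples to simples and commutes with finite direct sums (as the image of a functor of additive categories), $\p j_{!*} \Soc A$ is a semisimple subobject both of $\p j_{!*} A$ and of $\p j_* A$, and its isomorphism class accounts for every simple that can appear in either socle.

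Third I would chain the inclusions. From the previous step, $\p j_{!*} \Soc A \subseteq \Soc \p j_{!*} A$. Since $\p j_{!*} A \hookrightarrow \p j_* A$, we have $\Soc \p j_{!*} A \subseteq \Soc \p j_* A$. Finally, any simple subobject of $\p j_* A$ is, by the first two steps, of the form $\p j_{!*} T$ with $T \hookrightarrow A$, i.e.\ $T \hookrightarrow \Soc A$, hence $\Soc \p j_* A \subseteq \p j_{!*} \Soc A$. Combining these three inclusions gives the desired isomorphisms
\[
\p j_{!*} \Soc A \simeq \Soc \p j_{!*} A \simeq \Soc \p j_* A.
\]

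The dual statement for $\Top$ is obtained by reversing all arrows: the adjunction $(\p j_!, \p j^*)$ gives $\Hom(\p j_! A, \p i_* S) = 0$, $\p j_{!*}$ preserves epimorphisms by the preceding proposition, and $\p j_{!*} A$ is a quotient of $\p j_! A$. The main conceptual obstacle is keeping the perverse/non-perverse distinction of the gluing functors straight and verifying that the adjunctions pass to the hearts; once this bookkeeping is done, the argument is essentially formal, relying only on the vanishing $j^* \p i_* = 0$, the faithfulness of $j^*$ on the $\p j_{!*}$-image, and the exactness properties of $\p j_{!*}$ recorded in the previous proposition.
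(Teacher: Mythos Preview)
Your approach is the natural one and matches what the paper indicates (it only says ``using the preceding result one can show the following'' and defers to \cite{decperv}). Steps one and two are fine. The final inclusion $\Soc \p j_* A \subseteq \p j_{!*} \Soc A$ in step three, however, is not justified as written: knowing that every \emph{individual} simple subobject of $\p j_* A$ is isomorphic to some $\p j_{!*} T$ with $T \hookrightarrow \Soc A$ does not control multiplicities, so it does not yield an embedding of the whole socle into $\p j_{!*}\Soc A$.

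The repair is immediate with the ingredients you already have. Set $S := \Soc \p j_* A$. By your step one, $S$ has no summand in $\p i_*\CC_F$, hence $S \simeq \p j_{!*}(j^* S)$ with $j^* S$ semisimple. Applying the exact functor $j^*$ to the inclusion $S \hookrightarrow \p j_* A$ gives $j^* S \hookrightarrow A$, so $j^* S \hookrightarrow \Soc A$; since $\p j_{!*}$ preserves monomorphisms, $S = \p j_{!*}(j^* S) \hookrightarrow \p j_{!*}\Soc A$. This closes the chain and finishes the socle statement; the top statement is dual as you say. (Alternatively, the adjunction $\Hom(\p j_{!*} T,\p j_* A)\simeq\Hom(T,A)$ for $T$ simple in $\CC_U$, together with $\End(\p j_{!*}T)\simeq\End(T)$ by full faithfulness of $\p j_{!*}$, matches socle multiplicities directly and gives $\Soc \p j_* A \simeq \p j_{!*}\Soc A$ in one stroke.)
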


Thus the intermediate extension functor preserves tops and socles. The
second series of results is about composition multiplicities.

\begin{proposition}
If $B$ is a finite length object in $\CC$, then $\p j^*B$ is of finite
length in $\CC_U$ and we have
\[
[B : \p j_{!*} S] = [\p j^* B : S]
\]
for all simple objects $S$ in $\CC_U$.
\end{proposition}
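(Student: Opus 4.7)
The plan is to use induction on the length of $B$, exploiting the fact that $\p j^* = j^*$ is $t$-exact (so restricts to an exact functor between the hearts) together with the classification $\SC = \p j_{!*}\SC_U \cup \p i_*\SC_F$ recalled just before the statement.

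First I would analyze what $\p j^*$ does on simple objects of $\CC$. For a simple $T \in \SC_U$, the intermediate extension $\p j_{!*} T$ satisfies $\p j^* \p j_{!*} T \simeq T$ (this is one of the defining properties of $\p j_{!*}$). For a simple $T' \in \SC_F$, we have $\p j^* \p i_* T' = j^* i_* T' = 0$ by the recollement axioms. Hence on a simple object $S' \in \SC$, the functor $\p j^*$ either produces a simple object of $\CC_U$ (when $S' = \p j_{!*} T$ for some $T \in \SC_U$, in which case the output is $T$) or is zero (when $S'$ is supported on $F$). In particular the base case of the induction, $B$ simple, is immediate: both sides of the equality equal $1$ if $B \simeq \p j_{!*} S$ and $0$ otherwise.

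For the inductive step, given $B$ of length $n \geq 2$, pick any short exact sequence
\[
0 \to B' \to B \to B'' \to 0
\]
in $\CC$ with $B'$, $B''$ of strictly smaller length. Exactness of $\p j^*$ yields
\[
0 \to \p j^* B' \to \p j^* B \to \p j^* B'' \to 0
\]
in $\CC_U$. By the induction hypothesis, $\p j^* B'$ and $\p j^* B''$ are of finite length, hence so is $\p j^* B$. Since Jordan--Hölder multiplicities $[-:S]$ and $[-:\p j_{!*} S]$ are additive on short exact sequences (in $\CC_U$ and $\CC$ respectively), both sides of
\[
[B : \p j_{!*} S] = [\p j^* B : S]
\]
add up along the short exact sequence, reducing the claim for $B$ to the known claims for $B'$ and $B''$.

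I do not anticipate a serious obstacle: everything reduces to the two standard properties $\p j^* \p j_{!*} \simeq \mathrm{id}$ on $\CC_U$ and $\p j^* \p i_* = 0$, combined with the $t$-exactness of $\p j^*$. The only mild care needed is to make sure that when $\EM = \OM$ we are consistently using the perverse $t$-structure whose intermediate extension $\p j_{!*}$ is the one in the statement (as opposed to the dual $p_+$ version); but since the argument only uses formal recollement properties shared by both, no extra work is required.
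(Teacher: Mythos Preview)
Your proof is correct. The paper does not actually include a proof of this proposition: the surrounding subsection states the result and refers the reader to \cite{decperv} for proofs. Your argument by induction on the length of $B$, using the $t$-exactness of $j^*$, the identity $\p j^*\p j_{!*}\simeq\id$, the vanishing $j^*i_*=0$, and the classification $\SC=\p j_{!*}\SC_U\cup\p i_*\SC_F$, is exactly the standard one and needs no modification.
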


Since the three functors $\p j_!$, $\p j_{!*}$ and $\p j_*$ are
extension functors (meaning that applying $\p j^*$ to them gives back the
identity), we obtain the following corollary.

\begin{corollary}
\label{cor:IC mult}
If $A$ is an object in $\CC_U$, then we have
\[
[\p j_! A : \p j_{!*} S] = [\p j_{!*}A : \p j_{!*}S]
= [\p j_* A : \p j_{!*} S] = [A:S]
\]
for all simple objects $S$ in $\CC_U$,
whenever those multiplicities are defined (i.e. when the objects
involved have finite length).
\end{corollary}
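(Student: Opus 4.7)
The plan is to reduce all three equalities in the corollary to a single application of the preceding proposition, by exploiting the extension-functor property. Given $A$ in $\CC_U$, I would set $B$ successively equal to $\p j_! A$, $\p j_{!*} A$, and $\p j_* A$, and apply the previous proposition to each. It then suffices to identify $\p j^* B$ with $A$ in each case.

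For $B = \p j_! A$ and $B = \p j_* A$, the isomorphism $\p j^* B \simeq A$ is one of the standard recollement axioms: it expresses precisely that $\p j_!$ and $\p j_*$ are extension functors, or equivalently, that the adjunction unit $\id \to \p j^* \p j_!$ and counit $\p j^* \p j_* \to \id$ are isomorphisms. For $B = \p j_{!*} A$, the intermediate extension is defined as the image of the canonical morphism $\p j_! A \to \p j_* A$; since $j^*$ is $t$-exact in any recollement, so is $\p j^*$, and applying it to this morphism yields the image of the map $\p j^* \p j_! A \to \p j^* \p j_* A$. Under the identifications above, this map is the identity of $A$ (because the canonical $\p j_! \to \p j_*$ restricts to the identity under $\p j^*$, by construction of the adjunction morphisms), so its image is $A$, i.e. $\p j^* \p j_{!*} A \simeq A$ as well.

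Combining these three identifications with the preceding proposition gives all three equalities $[\p j_? A : \p j_{!*} S] = [\p j^* \p j_? A : S] = [A : S]$, which is exactly the content of the corollary. The finite length hypothesis is already built into the statement: one needs each of $\p j_! A$, $\p j_{!*} A$, $\p j_* A$ to be of finite length for the multiplicities on the left to make sense, and the previous proposition guarantees that $A = \p j^*\p j_? A$ is then automatically of finite length so that $[A:S]$ is defined too.

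The only step requiring any care is the identification $\p j^* \p j_{!*} A \simeq A$ as the identity on $A$ (not merely an isomorphism), so that the image construction returns $A$ on the nose. This is a formal consequence of the compatibility of the canonical morphism $\p j_! \to \p j_*$ with the adjunctions, and so poses no real obstacle; everything else is immediate from the previous proposition and the definitions.
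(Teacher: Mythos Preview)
Your proof is correct and follows exactly the same approach as the paper: apply the preceding proposition to $B = \p j_! A$, $\p j_{!*} A$, $\p j_* A$ in turn, using that all three are extension functors (i.e., $\p j^*$ applied to each returns $A$). The paper states this in a single sentence, while you have spelled out more carefully why $\p j^* \p j_{!*} A \simeq A$, but the argument is the same.
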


So the intermediate extension functor preserves composition
multiplicities. This will imply that, in the context of perverse
sheaves, the intermediate extension functor preserves decomposition
numbers. Let us first recall from \cite{decperv} that, in an
open/closed situation, starting with a torsion-free object $A$ in 
$\p \MC(U,\OM)$, we can define a priori nine extensions of $\FM A$
with the functors $\FM\, \p j_?$, $\p j_? \FM$ and $\FM\, \pp j_?$
where $? \in \{!, !*, *\}$. Out of those, $\p j_!$ and $\pp j_*$ might
fail to be perverse but the other seven are automatically perverse.
The three involving $!*$ ones are particularly interesting as they are the
modular reduction of the $\ic$, the $\ic$ of the modular reduction,
and the modular reduction of the dual $\ic$. Since they are all
extensions of $\FM A$, using the proposition we get the following.

\begin{corollary}
\label{cor:open mult}
If $A$ is a torsion-free object in $\p \MC(X,\OM)$, then
\[
[\FM\, \p j_{!*}A : \p j_{!*}S] 
= [\p j_{!*} \FM A : \p j_{!*}S] 
= [\FM\, \pp j_{!*}A : \p j_{!*}S] 
= [\FM A:S].
\]
\end{corollary}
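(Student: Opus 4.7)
The plan is to reduce all three equalities to the proposition preceding Corollary \ref{cor:IC mult}, which asserts that for a finite length object $B$ of $\CC$ one has $[B:\p j_{!*}S] = [\p j^*B:S]$ for every simple $S$ in $\CC_U$. The point is that each of the three perverse sheaves appearing on the left is a finite-length extension of $\FM A$ in the sense that $\p j^*$ sends it to $\FM A$. Note that although the corollary is stated for $A$ in $\p\MC(X,\OM)$, the natural reading (as in the discussion just above the statement) is that $A$ is a torsion-free object of $\p\MC(U,\OM)$, so that the three versions $\FM\,\p j_{!*}A$, $\p j_{!*}\FM A$ and $\FM\,\pp j_{!*}A$ all make sense as objects of $\p\MC(X,\FM)$.

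The first step is to verify that these three objects really are extensions of $\FM A$. For the middle one, $\p j^*\,\p j_{!*}\FM A = \FM A$ holds by definition of the intermediate extension functor. For the outer two, I would use that $\p j^*$ is $t$-exact and commutes with the modular reduction functor $\FM(-)$ (as $\p j^*$ is given by $j^*$, an exact functor on the level of constructible derived categories, which commutes with $\FM\otimes^L_\OM(-)$). Combined with $\p j^*\,\p j_{!*}A \cong A$ and the dual identity $\p j^*\,\pp j_{!*}A \cong A$ (both $\p j_{!*}$ and its $p_+$-analogue are genuine extensions of perverse sheaves on $U$), this gives
\[
\p j^*\,\FM\,\p j_{!*}A \;\cong\; \FM A \;\cong\; \p j^*\,\FM\,\pp j_{!*}A.
\]

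Next I would invoke the proposition to conclude, for each of the three extensions $B$ and each simple $S$ in $\p\MC(U,\FM)$,
\[
[B : \p j_{!*}S] \;=\; [\p j^*B : S] \;=\; [\FM A : S],
\]
provided $B$ is of finite length in $\CC = \p\MC(X,\FM)$. The finite length of $B$ reduces to the finite length of $\FM\,\p j_{!*}A$, $\p j_{!*}\FM A$ and $\FM\,\pp j_{!*}A$, which follows from the finite length of $\p j_{!*}A$ and $\pp j_{!*}A$ in $\p\MC(X,\OM)$ (and its dual version) together with the exactness properties of the various functors involved, or simply from the constructibility assumptions built into the recollement setup.

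The only substantive point that needs care — and what I would regard as the main (mild) obstacle — is the compatibility $\p j^*\circ\FM(-) \cong \FM(-)\circ \p j^*$ and the fact that the $p_+$-intermediate extension functor $\pp j_{!*}$ is also an extension of the identity when composed with $\p j^*$; both are standard but should be pointed to explicitly, e.g.\ to the corresponding statements in \cite{decperv}. Once those are in hand, the whole corollary becomes a one-line application of the preceding proposition.
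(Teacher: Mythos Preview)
Your proposal is correct and follows essentially the same approach as the paper: the paper simply remarks that the three objects $\FM\,\p j_{!*}A$, $\p j_{!*}\FM A$ and $\FM\,\pp j_{!*}A$ are all extensions of $\FM A$, and then applies the proposition $[B:\p j_{!*}S]=[\p j^*B:S]$. Your write-up makes explicit the compatibilities (that $j^*$ commutes with $\FM(-)$ and that $\pp j_{!*}$ is also an extension functor) that the paper leaves implicit, and you correctly note the typo that $A$ should lie in $\p\MC(U,\OM)$ rather than $\p\MC(X,\OM)$.
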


Now let $X$ be a stratified algebraic variety.
Recall from \cite{decperv} that we can define decomposition numbers
for perverse sheaves by
\[
d^X_{(\OC,\LC),(\OC',\LC')} := [\FM \ic(\ov\OC,\LC_\OM) : \ic(\ov{\OC'},\LC')]
\]
where $\OC$ and $\OC'$ are two strata, $\LC$ is a $\KM$-local system
on $\OC$ with some integral form $\LC_\OM$, and $\LC'$ is an
$\FM$-local system on $\OC'$. For simplicity, we assume that the
representations of the fundamental groups corresponding to the
considered local systems factor through a finite group, so that it is
clear that a stable lattice exists and that decomposition numbers are
well defined --- this will be the case for the nilpotent cone
stratified by the group orbits.
(However, we note that a finite dimensional continuous representation
over $\KM$ of a profinite
group like the étale fundamental group always has a stable lattice.)
The preceding corollary implies that decomposition numbers between
$\ic$'s with the same support are just decomposition numbers for (a
finite quotient of) the fundamental group of the stratum,
as stated below.

\begin{corollary}
\label{cor:dec same}
Let $\OC$ be a stratum in $X$ and let $\LC$ and $\LC'$ be two local
systems on $\OC$ corresponding to irreducible representations $\rho$ and $\rho'$
of $\pi_1(\OC)$ which factor through a finite quotient $H$. Then
\[
d^X_{(\OC,\LC),(\OC,\LC')} = d^H_{\rho,\rho'}.
\]
\end{corollary}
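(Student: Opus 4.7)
The plan is to reduce the problem to a computation purely on the stratum $\OC$ by means of Corollary \ref{cor:open mult}, and then to identify the resulting multiplicity with a decomposition number of the finite group $H$.

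First I would factor the inclusion of $\OC$ as $j: \OC \hookrightarrow \ov\OC$ followed by the closed immersion $i: \ov\OC \hookrightarrow X$, and use that by definition $\ic(\ov\OC,\LC_\OM) = i_*\, \p j_{!*}(\LC_\OM[\dim\OC])$ (and similarly for $\LC'$). The functor $i_*$ is $t$-exact and fully faithful on perverse sheaves, hence preserves composition multiplicities, so
\[
d^X_{(\OC,\LC),(\OC,\LC')} = [\FM\, \p j_{!*}(\LC_\OM[\dim\OC]) : \p j_{!*}(\LC'[\dim\OC])]
\]
inside $\p\MC(\ov\OC,\FM)$.

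Next, since $\LC_\OM$ is an $\OM$-locally free local system on the smooth connected stratum $\OC$, the shift $\LC_\OM[\dim\OC]$ is a torsion-free object of $\p\MC(\OC,\OM)$. The hypotheses of Corollary \ref{cor:open mult} are therefore satisfied for the open immersion $j:\OC\hookrightarrow\ov\OC$ with $A = \LC_\OM[\dim\OC]$ and $S = \LC'[\dim\OC]$, and that corollary directly removes the intermediate extension, giving
\[
d^X_{(\OC,\LC),(\OC,\LC')} = [\FM(\LC_\OM[\dim\OC]) : \LC'[\dim\OC]]
\]
in $\p\MC(\OC,\FM)$. Because $\OC$ is smooth and connected, the full subcategory of perverse sheaves of the form $\MC[\dim\OC]$ with $\MC$ a local system is equivalent to the category of $\FM$-local systems on $\OC$, i.e.\ to the category of continuous finite-dimensional $\FM$-representations of $\pi_1(\OC)$. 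The right-hand side is thus the multiplicity of $\rho'$ as a composition factor of $\FM \otimes_\OM \rho_\OM$ in $\FM[\pi_1(\OC)]\modules$, where $\rho_\OM$ is the $\OM[\pi_1(\OC)]$-lattice attached to $\LC_\OM$.

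For the last step, I would observe that since $\pi_1(\OC)$ acts on $\rho_\OM$ through $H$, an $\FM$-subspace of $\FM\otimes_\OM\rho_\OM$ is $\pi_1(\OC)$-stable if and only if it is $H$-stable; in particular a composition series over $\FM\pi_1(\OC)$ is the same as a composition series over $\FM H$. The multiplicity above is therefore exactly $d^H_{\rho,\rho'}$, as desired. There is no serious obstacle; the only point that requires care is the bookkeeping around normalisations — that the shift by $\dim\OC$ makes $\LC_\OM$ into a torsion-free perverse sheaf in the sense needed to invoke Corollary \ref{cor:open mult}, and that $\p j_{!*}(\LC_\OM[\dim\OC])$ matches the convention used to define $\ic(\ov\OC,\LC_\OM)$ in the definition of $d^X_{(\OC,\LC),(\OC,\LC')}$.
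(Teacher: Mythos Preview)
Your proof is correct and follows exactly the route the paper indicates: the paper states this corollary immediately after Corollary \ref{cor:open mult} and simply says that the latter implies decomposition numbers between $\ic$'s with the same support reduce to decomposition numbers for (a finite quotient of) the fundamental group of the stratum. You have faithfully unpacked that implication, including the reduction via $i_*$ to $\ov\OC$ and the passage from $\pi_1(\OC)$ to $H$.
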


We have a natural partial order on the strata of $X$ defined by
$\OC \geq \OC'$ if $\ov \OC \supset \OC'$.
For decomposition numbers involving different strata, one can make the
following easy observation.

\begin{proposition}
\label{prop:dec tri}
If some decomposition number $d^X_{(\OC,\LC),(\OC',\LC')}$ is nonzero,
then $\OC' \leq \OC$.
\end{proposition}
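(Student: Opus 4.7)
The plan is to argue by support considerations. By definition,
\[
d^X_{(\OC,\LC),(\OC',\LC')} = [\FM\,\ic(\ov\OC,\LC_\OM) : \ic(\ov{\OC'},\LC')],
\]
so I want to show that if $\ic(\ov{\OC'},\LC')$ occurs as a composition factor of $\FM\,\ic(\ov\OC,\LC_\OM)$, then $\ov{\OC'}\subseteq\ov\OC$.

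First I would observe that the modular reduction $\FM(-) = \FM\otimes^L_\OM(-)$ cannot enlarge supports: for any $\OM$-complex $K$ on $X$ and any point $x \notin \supp K$, the stalk $(\FM K)_x = \FM\otimes^L_\OM K_x$ vanishes. Applied to $K = \ic(\ov\OC,\LC_\OM)$, whose support is $\ov\OC$, this gives $\supp(\FM\,\ic(\ov\OC,\LC_\OM))\subseteq\ov\OC$.

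Next, the support of a perverse sheaf contains the supports of all its composition factors: if $j : U \hookrightarrow X$ is the open complement of $\ov\OC$, then $\p j^*$ is $t$-exact and kills $\FM\,\ic(\ov\OC,\LC_\OM)$, so it kills each of its composition factors as well. Hence every simple subquotient of $\FM\,\ic(\ov\OC,\LC_\OM)$ is supported in $\ov\OC$. In particular $\ic(\ov{\OC'},\LC')$ is supported in $\ov\OC$, which means $\ov{\OC'}\subseteq\ov\OC$, i.e.\ $\OC'\leq\OC$ in the stratification order.

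There is no real obstacle here; the only mild point to keep track of is that one is working with the standard perverse $t$-structure on $X$ relative to the chosen stratification, so that $\ic(\ov{\OC'},\LC')$ is indeed simple and has support precisely $\ov{\OC'}$, and that $\FM\,\ic(\ov\OC,\LC_\OM)$ is a genuine perverse sheaf of finite length (both of which are recalled in the setup of Section \ref{subsec:IC}), so that the notion of composition factor appearing in the definition of $d^X_{(\OC,\LC),(\OC',\LC')}$ makes sense.
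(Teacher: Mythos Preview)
Your proof is correct and follows essentially the same approach as the paper: both argue that $\FM\,\ic(\ov\OC,\LC_\OM)$ is supported on $\ov\OC$ since modular reduction does not enlarge supports, and then that every composition factor must also be supported there. The only cosmetic difference is that the paper phrases the second step by saying that $\p i_*\,\p\MC(\ov\OC,\FM)$ is a Serre subcategory of $\p\MC(X,\FM)$, whereas you use the equivalent formulation via the exact quotient functor $\p j^*$ for the open complement.
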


\begin{proof}
If $\LC_\OM$ is an integral form of the local system $\LC$, then
$\ic(\ov\OC,\LC_\OM)$ is an integral form of $\ic(\ov\OC,\LC)$, and is
clearly supported by $\ov\OC$, hence $\FM\ic(\ov\OC,\LC)$ is also
supported by $\ov\OC$. Now $\p i_* \p \MC(\ov\OC, \FM)$, where
$i:\ov\OC\to X$ is the closed inclusion, is a Serre
subcategory of $\p \MC(X, \FM)$, hence the result.
\end{proof}

Let us assume that $X$ is a $G$-variety with finitely many orbits, and
let us consider only $G$-equivariant local systems and perverse
sheaves. The $G$-equivariant local systems on some orbit $\OC$
correspond to finite dimensional representations of the finite group
$A_G(\OC) = C_G(x) / C_G^0(x)$ where $x$ is some point in $\OC$.
Corollary \ref{cor:dec same} and Proposition \ref{prop:dec tri} mean
that the decomposition matrix for $G$-equivariant perverse sheaves on
$X$ is block triangular, and the diagonal blocks are decomposition
matrices for the groups $A_G(\OC)$.

We have used the fact that an $\ic$ complex over $\KM$ has some
integral form, namely an $\ic$ complex over $\OM$, with the same support.
In the next subsection we will see a better statement: for any
perverse sheaf on some closed subvariety, any integral form is
supported by that closed subset.

\subsection{Integral forms and supports}

In this paragraph we prove a result which we will need in Section
\ref{sec:springer}, and which would have found its place in
\cite[\S 2.5]{decperv}. We consider a $k$-variety $X$ with an open
subvariety $j : U \to X$ and closed complement $i : Z \to X$. So we
are in a recollement situation.

\begin{proposition}
\label{prop:int supp F}
Suppose $A$ is a $\KM$-perverse sheaf on $X$ supported on $Z$, and let
$A_\OM$ be an integral form of $A$, that is, a torsion-free
$\OM$-perverse sheaf on $X$ such that $\KM A_\OM \simeq A$.
Then $A_\OM$ is also supported by $Z$.% as well.
\end{proposition}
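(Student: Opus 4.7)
The plan is to prove that $j^* A_\OM = 0$, where $j : U \hookrightarrow X$ denotes the open complement of $Z$; this is exactly what it means for $A_\OM$ to be supported on $Z$. Two formal observations drive the argument: (i) since $j$ is an open immersion, $j^*$ is $t$-exact for the perverse $t$-structure and commutes with the extension of scalars $\KM \otimes_\OM (-)$ (both are exact functors, the latter because $\KM$ is flat over $\OM$); and (ii) $j^*$ preserves torsion-freeness, because being torsion-free means that $\ell : A_\OM \to A_\OM$ is a monomorphism in the abelian category $\p\MC(X,\OM)$, and $j^*$ is exact on perverse sheaves, hence preserves monomorphisms.

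Applying (i), the object $P := j^* A_\OM$ is a perverse $\OM$-sheaf on $U$ satisfying
\[
\KM P \simeq j^*(\KM A_\OM) \simeq j^* A = 0,
\]
the last equality by hypothesis. By (ii), $P$ is moreover torsion-free. The proposition thus reduces to the following lemma: \emph{a torsion-free perverse $\OM$-sheaf $P$ on $U$ with $\KM P = 0$ must vanish.}

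To prove the lemma, I would argue stalkwise. For any $x \in U$ and any integer $i$, the stalk $\HH^i(P)_x$ is a finitely generated $\OM$-module, and $\KM P = 0$ gives $\KM \otimes_\OM \HH^i(P)_x = 0$, so each such stalk is $\ell$-power torsion. Using that $P$ is constructible with respect to a finite stratification of $U$ and lives in a bounded range of degrees, one obtains a uniform integer $N \geq 1$ such that $\ell^N$ annihilates every stalk of every sheaf cohomology of $P$. Therefore $\ell^N : P \to P$ is the zero morphism in $\p\MC(U,\OM)$. Combined with the monomorphism property coming from torsion-freeness, this forces $P = 0$, as required.

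The main obstacle is securing the uniform bound $N$: this step relies essentially on constructibility with respect to a finite stratification together with the boundedness of the underlying complex, so that only finitely many stalks of finitely many sheaf cohomology groups have to be controlled at once. Once that is in place, the rest is a formal consequence of the exactness properties of $j^*$ and the definition of torsion-freeness.
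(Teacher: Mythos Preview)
Your argument is correct in outline and in some ways more direct than the paper's: you aim to show $j^* A_\OM = 0$ outright, whereas the paper works with the recollement triangle
\[
j_! j^* A_\OM \longto A_\OM \longto i_* i^* A_\OM \rightsquigarrow,
\]
passes to perverse $\p H^0$, and observes that the resulting map $\p j_! j^* A_\OM \to A_\OM$ goes from a torsion object (since $\KM\, \p j_! j^* A_\OM = 0$) to a torsion-free one, hence vanishes, giving $A_\OM \simeq \p i_* \p i^* A_\OM$. Both routes rest on the same principle---an $\OM$-perverse sheaf which is simultaneously torsion (i.e.\ $\KM(-)=0$) and torsion-free (i.e.\ $\ell$ acts as a monomorphism) must vanish---but the paper simply invokes this from the torsion theory of \cite{decperv}, while you reprove the instance you need via stalks.

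There is, however, one genuine wrinkle in your stalk argument. From ``$\ell^N$ annihilates every stalk of every $\HC^i(P)$'' you correctly deduce that $\ell^N$ annihilates each cohomology sheaf $\HC^i(P)$; but the inference ``therefore $\ell^N : P \to P$ is the zero morphism'' is not automatic in a derived category: a morphism inducing zero on all cohomology objects need not itself be zero. The standard repair is a d\'evissage via the truncation triangles
\[
\tau_{\leq b-1} P \longto P \longto \HC^b(P)[-b] \rightsquigarrow :
\]
if $P$ has ordinary cohomological amplitude $m$ and $\ell^N$ kills each $\HC^i(P)$, then inductively $\ell^{Nm}$ kills $P$. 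Replacing $N$ by $Nm$ is harmless for your conclusion, so with this small adjustment the proof goes through.
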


\begin{proof}
First, we have $\KM\, j^* A_\OM = j^* A = 0$, hence
$j^* A_\OM$ is torsion. It follows that
$\p j_! j^* A_\OM$ is torsion as well.
Now, from the distinguished triangle
\[
j_! j^* A_\OM \longto A_\OM \longto i_* i^* A_\OM \rightsquigarrow
\]
we get the following perverse cohomology exact sequence:
\[
\p j_! j^* A_\OM \longto A_\OM \longto \p i_* \p i^* A_\OM \longto 0
\]
since $j_!$ is right $t$-exact. Now, the first map must be zero as
it goes from a torsion object to a torsion-free object.
Thus $A_\OM \simeq \p i_* \p i^* A_\OM$ is supported by $Z$.
\end{proof}

\subsection{Small and semi-small morphisms}

Let us recall the notions of semi-small and small morphisms, and a
proposition which shows their usefulness (the usual proof applies).

\begin{definition}
A morphism $\pi : \Xti \to X$ is \emph{semi-small} if there is a stratification
$\XG$ of $X$ such that the for all strata $S$ in $\XG$, and for all closed
points $s$ in $S$, we have $\dim \pi^{-1}(s) \leqslant \frac{1}{2}\codim_X(S)$.
If moreover these inequalities are strict for all strata of positive codimension,
we say that $\pi$ is \emph{small}.
\end{definition}

\begin{proposition}
\label{prop:small}
Let $\pi : \Xti \to X$ be a surjective, proper and separable morphism,
with $\Xti$ smooth irreducible of dimension $d$.
Let $\LC$ be an $\EM$-local system on $\Xti$. Let us consider the complex $K = \pi_*\LC[d]$.
\begin{enumerate}[(i)]
\item If $\pi$ is semi-small, then $\pi$ is generically finite and $K$ is $p$-perverse.
\item If $\pi$ is small, then $K = \p j_{!*}\p j^* K$
for any inclusion $j : U \to X$ of a smooth open dense subvariety over which
$\pi$ is étale.
\end{enumerate}
\end{proposition}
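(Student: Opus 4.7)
The plan is to base both parts on the stalk formula
\[
\mathcal{H}^i(K)_s = \HH^{i+d}(\pi^{-1}(s), \mathcal{L}),
\]
valid at every closed point $s \in X$ thanks to properness of $\pi$, together with the elementary vanishing $\HH^{i+d}(\pi^{-1}(s),\mathcal{L}) = 0$ whenever $i+d > 2\dim\pi^{-1}(s)$. Note that $\dim X = d$ because $\pi$ is surjective with $\tilde X$ irreducible of dimension $d$, so that a stratum $S$ of codimension $c$ has dimension $d - c$.

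For part (i), I would first apply the semi-small inequality to the dense open stratum of $X$ (where $c = 0$): the bound $\dim\pi^{-1}(s) \leq 0$ for such $s$ is exactly the statement that $\pi$ is generically finite. For the $p$-perversity of $K$, observe that $\mathcal{L}[d]$ is $p$-perverse on the smooth $d$-dimensional variety $\tilde X$, and plug the semi-small bound $\dim\pi^{-1}(s)\leq c/2$ into the stalk formula: this forces $\mathcal{H}^i(K)|_S = 0$ unless $i+d \leq c$, i.e.\ $\dim S \leq -i$, which yields the support condition $\dim\supp\mathcal{H}^i(K) \leq -i$. For the cosupport condition, I would combine properness with the commutation $\DM_X(\pi_*\mathcal{L}[d]) \simeq \pi_*\DM_{\tilde X}(\mathcal{L}[d])$; since $\tilde X$ is smooth of dimension $d$, the object $\DM_{\tilde X}(\mathcal{L}[d])$ is again a shifted local system on $\tilde X$, and the identical estimate applied to $\DM_X K$ gives the dual support bound.

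For part (ii), I would use the standard characterization (BBD 1.4.22) that a perverse $B$ on $X$ coincides with $\p j_{!*}\p j^*B$ if and only if $i^*B \in {}^pD^{\leq -1}(Z)$ and $i^!B \in {}^pD^{\geq 1}(Z)$, where $i : Z \to X$ is the closed complement of $U$. Because $U$ is dense and $\pi$ is étale over $U$, $Z$ is contained in the union of strata of strictly positive codimension. On such a stratum, the smallness assumption upgrades $\dim\pi^{-1}(s) \leq c/2$ to the strict inequality $\dim\pi^{-1}(s) < c/2$, and the stalk formula then sharpens the bound to $\dim\supp_S\mathcal{H}^i(i^*K) < -i$, establishing $i^*K \in {}^pD^{\leq -1}(Z)$. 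The condition on $i^!K$ follows by the same Verdier duality argument used in (i), applied to the shifted dual local system.

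The main technical delicacy is the cosupport condition over $\OM$ coefficients, where the $p$ and $p_+$ perverse $t$-structures genuinely differ and are exchanged by $\DM$. This is defused by the observation that, both for $\mathcal{L}[d]$ and for $\DM_{\tilde X}(\mathcal{L}[d])$, one is dealing with a local system on a smooth variety shifted by its dimension, hence an object which is torsion-free and perverse for both $t$-structures, so the symmetric application of the stalk estimate goes through without modification.
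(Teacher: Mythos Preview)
Your argument is correct and is precisely the ``usual proof'' the paper alludes to without writing out: proper base change for the stalk computation, the cohomological-dimension bound $H^{>2\dim F}(F,-)=0$, and Verdier self-duality of $\pi_*$ combined with smoothness of $\tilde X$ to obtain the cosupport condition. Your remark on the $\OM$ case is the right one; note that what you actually need there is only that $\DM_{\tilde X}(\mathcal L[d])$ is again a local system shifted by $d$ (so that the same stalk estimate applies to $\DM_X K$), which holds provided $\mathcal L$ is locally free over $\OM$---this is the only case used in the paper (indeed $\mathcal L$ is always the constant sheaf there). One small imprecision: your claim $\dim X=d$ does not follow from surjectivity alone but from surjectivity together with generic finiteness, which you do establish first from the semi-small condition on the open stratum; and in part~(ii) you may need to refine the stratification so that $U$ is a union of strata, which is harmless since smallness is preserved under refinement.
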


\subsection{Rational smoothness, $\ell$-smoothness}

Suppose $X$ is an irreducible variety. If $j:V\to X$ the
inclusion of a smooth open dense subvariety and $\LC$ is a local
system on $V$, then we denote by $\ic(X,\LC)$ the intermediate
extension $\p j_{!*}(\LC[\dim X])$.
We say that $X$ is $\EM$-smooth if $\ic(X,\EM)$ is reduced to
$\un \EM_X[\dim X]$. When $\EM = \OM$, we require this condition for both
perversities, $p$ and $p_+$. This property ensures that $X$ satisfies
Poincaré duality with $\EM$ coefficients.

\begin{proposition}
\label{prop:E smooth}
Let $H$ be a finite group of order prime to $\ell$.
If $X$ is an $\FM_\ell$-smooth $H$-variety, then $X/H$ is also
$\FM_\ell$-smooth.
\end{proposition}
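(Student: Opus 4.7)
The plan is to analyze the pushforward $K := \pi_*\un{\FM}_X[d]$ along the finite quotient map $\pi : X \to X/H$ (with $d = \dim X = \dim X/H$), show that it is an intermediate extension from a suitable étale open, and then extract $\ic(X/H, \FM) = \un{\FM}_{X/H}[d]$ from its $H$-invariants. As a preliminary observation, since $|H|$ is invertible in $\FM$ the averaging operator $|H|^{-1}\sum_{h \in H} h$ splits the unit $\un{\FM}_{X/H} \to \pi_*\un{\FM}_X$, exhibiting $\un{\FM}_{X/H} \simeq (\pi_*\un{\FM}_X)^H$ as a direct summand.

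The key step is to show $K = \p j_{!*}(j^*K)$, where $j : U \hookrightarrow X/H$ is the inclusion of a smooth open dense subvariety such that $V := \pi^{-1}(U)$ is smooth and $\pi|_V : V \to U$ is étale Galois with group $H$ (take $U$ to be the image in $X/H$ of the smooth $H$-free locus of $X$, after reducing to a faithful action). Let $i : Z \hookrightarrow X/H$ denote the closed complement. Since $\pi$ is finite, $\pi_*$ is $t$-exact, and the hypothesis that $X$ is $\FM_\ell$-smooth gives $\un{\FM}_X[d] = \ic(X, \FM)$, so $K$ is perverse on $X/H$. Because $\pi$ has 0-dimensional fibres, the ordinary cohomology sheaves $\HH^k(\pi_*\un{\FM}_X)$ vanish for $k \neq 0$, so $\HH^k(i^*K) = 0$ for $k \neq -d$ and $\dim\supp \HH^{-d}(i^*K) \leq \dim Z \leq d - 1$; this places $i^*K$ in $\p D^{<0}(Z, \FM)$. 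Moreover, $\FM_\ell$-smoothness of $X$ gives $\DM_X(\un{\FM}_X[d]) \simeq \un{\FM}_X[d]$, so $K$ is Verdier self-dual (using $\pi_* = \pi_!$ and $\DM \pi_! = \pi_* \DM$), and by duality $i^!K \in \p D^{>0}(Z, \FM)$. These are precisely the conditions characterising the intermediate extension.

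Now decompose by the commuting $H$-action. Both $K$ and $\p j_{!*}(j^*K)$ carry an $H$-action over $X/H$, and $|H|^{-1}$ makes isotypic decomposition available; the functor $\p j_{!*}$ preserves this decomposition. The restriction $j^*K = (\pi|_V)_*\un{\FM}_V[d]$ is the shifted regular-representation local system on $U$, whose $H$-invariants is $\un{\FM}_U[d]$. Taking $H$-invariants in the identity $K = \p j_{!*}(j^*K)$ therefore yields
\[
K^H \simeq \p j_{!*}\bigl((j^*K)^H\bigr) = \p j_{!*}(\un{\FM}_U[d]) = \ic(X/H, \FM).
\]
But the preliminary observation gives $K^H = \un{\FM}_{X/H}[d]$. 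Comparing the two expressions, $\un{\FM}_{X/H}[d] = \ic(X/H, \FM)$, which is exactly the $\FM_\ell$-smoothness of $X/H$.

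The hard part of the argument is the middle step, and specifically the cosupport half $i^!K \in \p D^{>0}(Z, \FM)$: this is where the full strength of $\FM_\ell$-smoothness of $X$ is used, through the Verdier self-duality of $\un{\FM}_X[d]$, rather than merely its perversity. Everything else is formal given the $t$-exactness of $\pi_*$ (finite map), the invertibility of $|H|$, and the fact that every finite morphism is small.
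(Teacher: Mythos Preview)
The paper states this proposition without proof, so there is no approach to compare against. Your argument is correct and is the standard one.

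A few remarks on the write-up. First, your appeal to Proposition~\ref{prop:small} is not quite available as stated, since that proposition assumes the source is smooth whereas here $X$ is only $\FM_\ell$-smooth; you correctly sidestep this by arguing directly via self-duality of $\un\FM_X[d]$, so the issue is only cosmetic. Second, when you say $\p j_{!*}$ ``preserves this decomposition'', the underlying fact is that $\p j_{!*}$, while not exact, is an additive functor (being the image of the natural transformation $\p j_! \Rightarrow \p j_*$ between additive functors), so it commutes with the idempotent $|H|^{-1}\sum_h h$; this is worth making explicit since readers may worry about the non-exactness. Third, your reduction to a faithful action and the density of the $H$-free locus both use irreducibility of $X$, which is part of the paper's standing hypothesis for $\FM_\ell$-smoothness; it would do no harm to say so.
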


\section{Fourier-Deligne transform}
\label{sec:fourier}

We will recall well known facts about the Fourier-Deligne transform
(see \cite{Lau}), pointing out that it makes sense with coefficients
in $\KM$, $\OM$ or $\FM$. It is a pleasant exercise to fill in the details
in Laumon's elegant exposition. One may refer to
\cite[Chapter 5]{Juteau:thesis} for details.

In this section, $k = \FM_q$.
Let us assume that $\EM^\times$ contains a primitive root of unity of order $p$.
We fix a non-trivial character $\psi : \FM_p \to \EM^\times$, that is,
a primitive root $\psi(1)$ of order $p$ in $\EM^\times$.
Composing with $\Tr_{\FM_q/\FM_p}$, we get a character of $\FM_q$.
Let $\LC_\psi$ be the locally constant $\EM$-sheaf of rank $1$ 
on the additive group $\GM_{a}$ associated to $\psi$ (the corresponding Artin-Schreier
local system).

Let $S$ be a variety, and $E\elem{\pi} S$
a vector bundle of constant rank $r \geqslant 1$.
We denote by $E' \elem{\pi'} S$ its dual vector bundle,
by $\mu : E \times_S E' \to \AM^1$ the canonical pairing,
and by $\pr : E \times_S E' \to E$ and $\pr' : E \times_S E' \to E'$
the canonical projections. So we have the following diagram.

\[
\xymatrix{
& E\times_S E'
\ar[dl]_\pr
\ar[dr]^{\pr'}
\ar[rr]^\mu 
&& \AM^1\\
E \ar[dr]_\pi 
\ar@{}[rr] | {\boxempty}
&& E' \ar[dl]^{\pi'}\\
&S
}
\]

\begin{definition}
The Fourier-Deligne transform for $E \elem{\pi} S$, associated to the
character $\psi$, is the triangulated functor
\[
\FC_\psi : D^b_c(E,\EM) \longto D^b_c(E',\EM)
\]
defined by
\[
\FC_\psi(K) = {\pr'}_! (\pr^* K \otimes^\LM_\EM \mu^*\LC_\psi) [r]
\]
\end{definition}

In the sequel, we will drop the indices $\psi$ from the notations
$\FC_\psi$ and $\LC_\psi$ when no confusion may arise.

Let $E''\elem{\pi''}S$ be the bidual vector bundle of $E\elem{\pi}S$
and $a : E \elem{\sim}E''$ the $S$-isomorphism defined by $a(e) = -\mu(e,-)$
(that is, the opposite of the canonical $S$-isomorphism). We will denote by
$\s:S\to E$, $\s':S\to E'$ and $\s'':S\to E''$ the respective null sections
of $\pi$, $\pi'$ and $\pi''$. Finally, we will denote by
$s:E\times_S E\to E$ the addition of the vector bundle $E\elem{\pi}S$
and by $- 1_E:E\to E$ the opposite for this addition.

The following Proposition is the analogue of the fact that the Fourier transform
of the constant function is a Dirac distribution supported at the origin in
classical Fourier analysis. By the function/sheaf dictionary, this becomes
a functorial isomorphism, to which we will refer as \eqref{eq:dirac}.
It will be used to prove that the pair corresponding to the trivial
character $1_W$ is the trivial pair $(0,1)$ consisting of the zero
nilpotent orbit with the trivial local system, both in the ordinary and
modular cases.

\begin{proposition}
\label{prop:dirac}
We have a functorial isomorphism 
\begin{equation*}
\label{eq:dirac}
\tag{DIRAC}
\FC(\pi^* L[r]) \simeq \s'_* L (-r)
\end{equation*}
for all objects $L$ in $D^b_c(S,\EM)$.
\end{proposition}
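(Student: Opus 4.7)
The plan is to reduce the statement to the standard computation that the Fourier transform of the constant sheaf on a vector bundle is a delta function at the origin of the dual bundle.

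First I would exploit that $\pi\circ\pr = \pi'\circ\pr' : E\times_S E' \to S$, giving $\pr^*\pi^*L \simeq \pr'^*\pi'^*L$. Substituting into the definition of $\FC$ and applying the projection formula for the proper morphism $\pr'$ yields
\[
\FC(\pi^*L[r]) \;\simeq\; \pi'^*L \otimes^L_\EM \pr'_!\bigl(\mu^*\LC_\psi\bigr)\,[2r].
\]

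Next I would prove the key identity
\[
\pr'_!(\mu^*\LC_\psi) \;\simeq\; \s'_*\un\EM_S(-r)[-2r],
\]
which really is the Fourier transform of the constant sheaf. Over the zero section, proper base change along the Cartesian square
\[
\xymatrix@R=1.2pc{E \ar[r] \ar[d]_\pi & E\times_S E' \ar[d]^{\pr'}\\ S\ar[r]_{\s'} & E'}
\]
yields $\s'^*\pr'_!(\mu^*\LC_\psi) \simeq \pi_!\un\EM_E \simeq \un\EM_S(-r)[-2r]$, since $\mu$ vanishes on $E\times_S\s'(S)$ and $\pi$ is an $\AM^r$-bundle. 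Away from the zero section I would work locally on $S$, trivialize $E$, split off a one-dimensional factor in the direction of a nonzero covector, and invoke Künneth to reduce to the characteristic vanishing $H^*_c(\AM^1,\LC_\psi) = 0$ of the nontrivial Artin-Schreier sheaf.

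Combining the two steps and applying the projection formula for the closed immersion $\s'$, together with $\pi'\circ\s'=\id_S$, gives
\[
\FC(\pi^*L[r]) \;\simeq\; \pi'^*L \otimes \s'_*\un\EM(-r) \;\simeq\; \s'_*\bigl(\s'^*\pi'^*L\bigr)(-r) \;\simeq\; \s'_*L(-r),
\]
as desired. The main obstacle is the second step: upgrading the stalk-by-stalk vanishing off the zero section to a genuine global isomorphism on $E'$. The cleanest route is to verify both the stalks and the specialization morphisms uniformly by base change after passing to a local trivialization of $E$, which reduces everything to the well-documented case $S=\pt$, and then gluing the resulting identifications (all functorial in $L$) across a trivializing open cover of $S$.
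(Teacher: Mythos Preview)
The paper does not actually prove Proposition~\ref{prop:dirac}; Section~\ref{sec:fourier} explicitly recalls the Fourier--Deligne formalism from Laumon~\cite{Lau} (with details in \cite[Chapter~5]{Juteau:thesis}) without reproducing the arguments. So there is no proof in the paper to compare against, only the reference.

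Your argument is the standard one and is correct. One remark: you overstate the ``main obstacle''. Once you know that every stalk of $\pr'_!(\mu^*\LC_\psi)$ at a geometric point of $E'\setminus\s'(S)$ vanishes, the restriction of this constructible complex to the open complement $j:E'\setminus\s'(S)\hookrightarrow E'$ is zero. The recollement triangle $j_!j^*K\to K\to \s'_*\s'^*K\to$ then yields a \emph{canonical} isomorphism $\pr'_!(\mu^*\LC_\psi)\simeq \s'_*\s'^*\pr'_!(\mu^*\LC_\psi)$, and your proper base change computation identifies $\s'^*\pr'_!(\mu^*\LC_\psi)$ canonically with $\pi_!\un\EM_E\simeq\un\EM_S(-r)[-2r]$. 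No gluing over a trivializing cover is needed, and there is no issue with specialization maps: the isomorphism is global and functorial from the outset. The local trivialization is only used, pointwise, to verify the stalk vanishing via K\"unneth and $R\Gamma_c(\AM^1,\LC_\psi)=0$.
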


The following fundamental result says that there is an inverse Fourier
transform. We will use it to see that the Springer correspondence is injective.

\begin{theorem}
\label{th:inv}
Let $\FC'$ be the Fourier-Deligne transform, associated to the character
$\psi$, of the vector bundle $E'\elem{\pi'}S$. Then we have a functorial
isomorphism
\begin{equation*}
\label{eq:inv}
\tag{INV}
\FC' \circ \FC (K) \simeq a_* K (-r)
\end{equation*}
for all objects $K$ in $D^b_c(E,\EM)$.
\end{theorem}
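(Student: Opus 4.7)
The approach is to compute $\FC' \circ \FC(K)$ directly by unwinding the two definitions and reducing everything to a single pushforward from the triple fibre product $T := E \times_S E' \times_S E''$, at which point the formula \eqref{eq:dirac} will do the rest. Let $p_1, p_2, p_3$ denote the three projections from $T$, and $p_{ij}$ the partial projections to the two-fold products. First, applying proper base change to the Cartesian square obtained from $T$, $E \times_S E'$, $E' \times_S E''$ and $E'$, I would rewrite $(\pr')^* \FC(K)$ as $p_{23!} p_{12}^*(\pr^* K \otimes \mu^*\LC_\psi)[r]$. Then the projection formula, combined with the multiplicativity of the Artin-Schreier sheaf (i.e.\ $s^*\LC_\psi \simeq \LC_\psi \boxtimes \LC_\psi$ for $s$ the addition on $\GM_a$), collapses the iterated expression to
\[
\FC' \circ \FC(K) \simeq p_{3!}\bigl(p_1^* K \otimes \nu^*\LC_\psi\bigr)[2r],
\]
where $\nu : T \to \AM^1$ is given by $\nu(e,e',e'') = \mu(e,e') + \mu'(e',e'')$.

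The next step is a change of variables that exploits the canonical identification of $E$ with its bidual. Writing $\beta : E \to E''$ for the canonical (as opposed to the sign-twisted) isomorphism, so that $\mu'(e', \beta(e)) = \mu(e,e')$ and $a = -\beta$, I get
\[
\nu(e,e',e'') = \mu'\bigl(e', e'' - a(e)\bigr).
\]
Thus, viewing $T \to E \times_S E''$ as the pullback of the vector bundle $E' \to S$, the sheaf $\nu^*\LC_\psi$ is the pullback of $\mu'^*\LC_\psi$ under the $E \times_S E''$-automorphism of this relative vector bundle determined by the shift of the dual coordinate by $-a(e)$. Applying the relative form of \eqref{eq:dirac} to push forward $\nu^*\LC_\psi[r]$ along $T \to E \times_S E''$ then produces a sheaf concentrated on the locus $\{e'' = a(e)\}$, that is, on the graph $\Gamma_a \subset E \times_S E''$, with an additional Tate twist $(-r)$. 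Tensoring with $p_1^* K$ and pushing forward to $E''$ gives $a_! K(-r)$; since $a$ is an isomorphism and in particular proper, $a_! = a_*$, and \eqref{eq:inv} follows.

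The main obstacle in executing this plan is the careful bookkeeping of shifts, Tate twists, and sign conventions. The two shifts $[r]$ introduced by the two Fourier transforms must cancel against the shift implicit in reading the relative \eqref{eq:dirac} as a pushforward to the zero section, and the sign in the definition $a(e) = -\mu(e,-)$ is precisely what ensures that the final pushforward is along the graph of $a$ rather than its negative, yielding the clean statement of \eqref{eq:inv}. The remaining ingredients --- proper base change, the projection formula, and the multiplicativity of $\LC_\psi$ --- are available verbatim with $\OM$ and $\FM$ coefficients, since these formal properties make no use of semisimplicity of the coefficient ring, so the same argument produces (INV) in all three cases $\EM = \KM, \OM, \FM$.
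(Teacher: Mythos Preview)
The paper does not give its own proof of Theorem~\ref{th:inv}: Section~\ref{sec:fourier} explicitly recalls the Fourier--Deligne formalism from Laumon \cite{Lau} without proof, noting only that the arguments go through unchanged with $\OM$ or $\FM$ coefficients and referring to \cite[Chapter~5]{Juteau:thesis} for details. So there is nothing in the paper to compare against; your task is effectively to reproduce the standard argument, and your outline is indeed the classical one.

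Your plan is correct and complete in substance. The reduction to the triple product, the identification $\nu(e,e',e'') = \mu'(e', e'' - a(e))$, and the conclusion via the graph of $a$ are exactly the right steps, and you are right that none of the ingredients (proper base change, projection formula, multiplicativity of $\LC_\psi$) require anything about the coefficient ring.

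One passage deserves tightening. You write that $\nu^*\LC_\psi$ is ``the pullback of $\mu'^*\LC_\psi$ under the $E\times_S E''$-automorphism of this relative vector bundle determined by the shift of the dual coordinate by $-a(e)$.'' This does not quite parse: there is no automorphism of $T$ over $E\times_S E''$ in play here. What you actually need is the map $h:T\to E'\times_S E''$, $(e,e',e'')\mapsto (e', e''-a(e))$, which fits into a Cartesian square
\[
\xymatrix{
T \ar[r]^-{h} \ar[d]_{p_{13}} & E'\times_S E'' \ar[d]^{\pr'}\\
E\times_S E'' \ar[r]_-{g} & E''
}
\]
with $g(e,e'')=e''-a(e)$. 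Then $\nu^*\LC_\psi = h^*\mu'^*\LC_\psi$, proper base change gives $p_{13!}(\nu^*\LC_\psi) = g^*\pr'_!(\mu'^*\LC_\psi)$, and now \eqref{eq:dirac} for $E'\to S$ identifies $\pr'_!(\mu'^*\LC_\psi)$ with $\sigma''_*\EM_S(-r)[-2r]$. Pulling back by $g$ lands you on the graph $\Gamma_a = g^{-1}(0)$, and the rest of your argument goes through verbatim. With this small rewording the proof is clean and matches the standard literature.
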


\begin{corollary}
\label{cor:equiv db}
The triangulated functor $\FC$ is an equivalence
of triangulated categories from
$D^b_c(E,\EM)$ to $D^b_c(E',\EM)$, with quasi-inverse
$a^* \FC'(-)(r)$.
\end{corollary}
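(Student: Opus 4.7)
The plan is to treat the corollary as essentially a formal consequence of Theorem \ref{th:inv}. Write $G := a^*\circ\FC'(-)(r)$ for the candidate quasi-inverse. One direction is immediate: by Theorem \ref{th:inv},
\[
G\bigl(\FC(K)\bigr) = a^*\FC'\FC(K)(r) \simeq a^*\bigl(a_*K(-r)\bigr)(r) \simeq a^*a_*K \simeq K
\]
functorially in $K \in D^b_c(E,\EM)$, since $a\colon E\weq E''$ is an $S$-isomorphism (so $a^*a_*\simeq\id$) while the Tate twists $(-r)$ and $(r)$ are mutually inverse.

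For the reverse composition $\FC\circ G \simeq \id$, I would apply Theorem \ref{th:inv} once more, but now with the vector bundle $E'\elem{\pi'}S$ playing the role of $E\elem{\pi}S$. This produces, for every $L\in D^b_c(E',\EM)$, a functorial isomorphism $\FC''\circ\FC'(L)\simeq a'_*L(-r)$, where $\FC''$ denotes the Fourier-Deligne transform of $E'$ (landing in $D^b_c(E''',\EM)$) and $a'\colon E'\weq E'''$ is the analogous sign-twisted bidual identification. Under the canonical identifications $E\weq E''$ via $a$ and $E'\weq E'''$ via $a'$, the Fourier transform $\FC''$ becomes identified with $\FC$ up to the sign built into $a$. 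Transporting the formula back through these identifications and applying $a'^*(-)(r)$ on the left yields $\FC\bigl(G(L)\bigr)\simeq L$ naturally in $L$.

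The only genuine obstacle is bookkeeping: the map $a(e)=-\mu(e,-)$ is deliberately the \emph{opposite} of the canonical bidual isomorphism, and one must verify that this sign interacts correctly with the iterated application of $\FC$ when the inversion formula is recycled on $E'$. Once that is checked, everything else reduces to formal manipulations with Tate twists and with $a^*a_*\simeq\id$. No further input from the coefficient ring is required, so the argument works uniformly for $\EM\in\{\KM,\OM,\FM\}$, which is precisely the flexibility needed in Section \ref{sec:springer} to construct the modular Springer correspondence.
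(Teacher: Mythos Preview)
Your proposal is correct and is precisely the formal derivation the paper has in mind: the corollary is stated without proof as an immediate consequence of Theorem \ref{th:inv}, and your argument---applying \eqref{eq:inv} once to obtain $G\circ\FC\simeq\id$, then a second time with $E'$ in place of $E$ and transporting along $a$ and $a'$ to obtain $\FC\circ G\simeq\id$---is exactly the expected unpacking. The sign bookkeeping you flag does work out (one checks that ${}^ta\circ a'=\id_{E'}$ fibrewise, since the two minus signs from $a$ and $a'$ cancel), so there is no hidden obstacle.
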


We will also need the following result, which describes how the
Fourier-Deligne transform behaves with respect to morphisms of vector
bundles.

\begin{theorem}
\label{th:morphism}
Let $f : E_1 \to E_2$ be a morphism of vector bundles over $S$,
with constant ranks $r_1$ and $r_2$ respectively, and
let $f' : E'_2 \to E'_1$ denote the transposed morphism.
Then we have a functorial isomorphism
\begin{equation*}
\label{eq:morphism}
\tag{MOR}
\FC_2(f_! K_1) \simeq f'^* \FC_1(K_1)[r_2 - r_1]
\end{equation*}
for $K_1$ in $D^b_c(E_1,\EM)$, where $\FC_1$ and $\FC_2$ denote the
Fourier-Deligne transforms for $E_1$ and $E_2$.
\end{theorem}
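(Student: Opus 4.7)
The plan is to reduce both sides of \eqref{eq:morphism} to a common expression involving the fibre product $E_1 \times_S E'_2$, using proper base change, the projection formula, and the key identity $\mu_2 \circ (f \times \id_{E'_2}) = \mu_1 \circ (\id_{E_1} \times f')$ that characterizes the transpose.

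First I would introduce the auxiliary vector bundle $E_1 \times_S E'_2$ together with its projections $p : E_1 \times_S E'_2 \to E_1$ and $p' : E_1 \times_S E'_2 \to E'_2$, and the two morphisms
\[
F := f \times \id_{E'_2} : E_1 \times_S E'_2 \longto E_2 \times_S E'_2,
\qquad
F' := \id_{E_1} \times f' : E_1 \times_S E'_2 \longto E_1 \times_S E'_1.
\]
Then I would write down two Cartesian squares: the first compares $E_2 \times_S E'_2 \to E_2$ with $E_1 \times_S E'_2 \to E_1$ via $f$ and $F$; the second compares $E_1 \times_S E'_1 \to E'_1$ with $E_1 \times_S E'_2 \to E'_2$ via $f'$ and $F'$. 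Both are Cartesian by the universal property of the fibre product. I would also note the commutativity relations $\pr'_2 \circ F = p'$ and $\pr_1 \circ F' = p$, and above all the identity
\[
F^* \mu_2^* \LC_\psi \;\simeq\; F'^* \mu_1^* \LC_\psi,
\]
which expresses that $f'$ is the transpose of $f$ (both pullbacks send $(e_1,e'_2)$ to $\LC_\psi$ along $\mu_1(e_1,f'(e'_2)) = \mu_2(f(e_1),e'_2)$).

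Next I would compute $\FC_2(f_! K_1)$. By proper base change applied to the first Cartesian square, $\pr_2^* f_! K_1 \simeq F_! p^* K_1$. Substituting this into the definition of $\FC_2$ and then applying the projection formula along $F$, using $(F^* \mu_2^* \LC_\psi) \simeq F'^* \mu_1^* \LC_\psi$, yields
\[
\FC_2(f_! K_1) \;\simeq\; p'_!\bigl(p^* K_1 \otimes^L F'^* \mu_1^* \LC_\psi\bigr)[r_2].
\]
Analogously, I would compute $f'^* \FC_1(K_1)$: applying proper base change along the second Cartesian square gives $f'^*(\pr'_1)_! \simeq p'_! F'^*$, and since $\pr_1 \circ F' = p$ this yields
\[
f'^* \FC_1(K_1) \;\simeq\; p'_!\bigl(p^* K_1 \otimes^L F'^* \mu_1^* \LC_\psi\bigr)[r_1].
\]
Comparing the two expressions gives the isomorphism $\FC_2(f_! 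K_1) \simeq f'^* \FC_1(K_1)[r_2 - r_1]$.

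The computation is essentially formal once the diagrams are in place; the only real subtlety is the identity $F^* \mu_2^* \LC_\psi \simeq F'^* \mu_1^* \LC_\psi$, which is the incarnation of the definition of transpose at the level of the Artin--Schreier sheaf. I do not expect any serious obstacle beyond bookkeeping of the shifts (the $[r_2]$ from $\FC_2$ versus the $[r_1]$ from $\FC_1$), which is where the discrepancy $[r_2 - r_1]$ in the statement comes from.
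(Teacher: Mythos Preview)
Your argument is correct and is precisely the standard proof: set up the two Cartesian squares over $E_1\times_S E'_2$, apply proper base change and the projection formula, and use the defining identity $\mu_2\circ(f\times\id)=\mu_1\circ(\id\times f')$ for the transpose to match the Artin--Schreier pullbacks; the shifts $[r_2]$ and $[r_1]$ then account for the $[r_2-r_1]$.

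There is nothing to compare against in the paper itself: Theorem~\ref{th:morphism} is stated without proof, as one of the properties of the Fourier--Deligne transform recalled from Laumon \cite{Lau} (the paper refers to \cite[Chapter~5]{Juteau:thesis} for details). Your write-up is exactly the kind of verification the author had in mind when calling it ``a pleasant exercise to fill in the details''.
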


With this result, one can generalize Proposition \ref{prop:dirac}
\eqref{eq:dirac} to any sub-vector bundle:

\begin{proposition}
\label{prop:sub}
Let $i : F \hookrightarrow E$ be a sub-vector bundle over $S$, with constant
rank $r_F$. We denote by $i^\perp : F^\perp \hookrightarrow E'$ the orthogonal
of $F$ in $E'$. Then we have a canonical isomorphism
\begin{equation*}
\label{eq:sub}
\tag{SUB}
\FC(i_* \EM_F [r_F]) \simeq i^\perp_* \EM_{F^\perp}(-r_F)[r - r_F]
\end{equation*}
\end{proposition}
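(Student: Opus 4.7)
The plan is to combine the two previously established properties of the Fourier--Deligne transform---namely functoriality under morphisms of vector bundles (Theorem \ref{th:morphism}) and the Dirac-type computation (Proposition \ref{prop:dirac})---with a base change argument.

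\textbf{Step 1: reduction via the morphism formula.} View $i : F \hookrightarrow E$ as a morphism of vector bundles over $S$, of ranks $r_F$ and $r$ respectively. Its transpose is the surjection $i' : E' \twoheadrightarrow F'$ whose kernel is, by definition, $F^\perp$, so that $i^\perp : F^\perp \hookrightarrow E'$ is exactly the inclusion of $\ker(i')$. Since $i$ is a closed immersion we have $i_! = i_*$, and Theorem \ref{th:morphism} gives
\[
\FC(i_* \EM_F[r_F]) \simeq i'^* \, \FC_F(\EM_F[r_F]) \, [\, r - r_F \,],
\]
where $\FC_F$ denotes the Fourier--Deligne transform for $F \to S$.

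\textbf{Step 2: the Dirac computation on $F$.} Write $\EM_F = \pi_F^* \EM_S$, where $\pi_F : F \to S$ is the bundle projection. Applying Proposition \ref{prop:dirac} with vector bundle $F \to S$ and $L = \EM_S$ yields
\[
\FC_F(\EM_F[r_F]) \;\simeq\; \s'_{F\,*}\, \EM_S \,(-r_F),
\]
where $\s'_F : S \to F'$ is the zero section of the dual bundle.

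\textbf{Step 3: base change on the dual side.} Because $i^\perp : F^\perp \hookrightarrow E'$ is the preimage of the zero section of $F'$ under $i' : E' \to F'$, we have a Cartesian square
\[
\xymatrix{
F^\perp \ar[r]^{i^\perp} \ar[d] & E' \ar[d]^{i'} \\
S \ar[r]^{\s'_F} & F'.
}
\]
Since $\s'_F$ is a closed immersion, proper base change gives $i'^* \s'_{F\,*} \EM_S \simeq i^\perp_* \EM_{F^\perp}$. Combining this with Steps 1 and 2 (and pulling the Tate twist outside $i'^*$, which is harmless) produces the desired isomorphism
\[
\FC(i_* \EM_F[r_F]) \;\simeq\; i^\perp_* \EM_{F^\perp}(-r_F)[\, r - r_F\,].
\]

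There is essentially no obstacle here beyond bookkeeping: the identification of $F^\perp$ as the kernel of $i'$ makes the relevant square Cartesian on the nose, and base change then works uniformly for $\EM \in \{\KM, \OM, \FM\}$, so no semisimplicity or decomposition theorem input is needed. The mild point to check is canonicity---i.e.\ that the two applications of (MOR) and (DIRAC) combine into one natural isomorphism---which follows from the naturality statements contained in those results.
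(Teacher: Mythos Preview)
Your proof is correct and follows exactly the route the paper signals: the paper introduces Proposition~\ref{prop:sub} with the phrase ``With this result, one can generalize Proposition~\ref{prop:dirac}\ldots'', where ``this result'' is Theorem~\ref{th:morphism}, so the intended argument is precisely your combination of \eqref{eq:morphism} and \eqref{eq:dirac} together with the obvious base change along the Cartesian square identifying $F^\perp$ as $i'^{-1}$ of the zero section.
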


Now we consider how the Fourier-Deligne transform behaves with respect
to a base change. We will need \eqref{eq:bc_!} to prove that the
Fourier-Deligne transform of the Grothendieck sheaf is the Springer sheaf.

\begin{proposition}
\label{prop:bc}
Let $f : S_1 \to S$ be an $\FM_q$-morphism of finite type. We have a
base change diagram:
{\scriptsize
\[
\xymatrix{
&
E_1 \times_{S_1} E'_1
\ar[dl]^{\pr_1}
\ar[dr]_{\pr'_1}
\ar[drrr]^F
\ar[rrrrr]^{\mu_1}
&&&&&
\AM^1
\\
E_1
\ar[dr]_{\pi_1}
\ar[drrr]^(.3){f_E}
&&
E'_1
\ar[dl]_(.3){\pi'_1} |!{[ll];[dr]}\hole
\ar[drrr]^(.3){f_{E'}} |!{[rr];[dr]}\hole
%\ar[dl]_(.3){\pi'_1} 
%\ar[drrr]^(.3){f_{E'}}
&&
E \times_S E'
\ar[urr]_\mu
\ar[dl]^(.7)\pr
\ar[dr]^{\pr'}
\ar@{}[ul] | {\D}
\\
&
S_1
\ar[drrr]_f
&& E \ar[dr]_\pi
&& E' \ar[dl]^{\pi'}
\\
&&&& S
}
\]
}
Let $\FC_1$ denotes the Fourier-Deligne transform associated to
$E_1 \elem{\pi_1} S$.
Then we have functorial isomorphisms
\begin{equation*}
\label{eq:bc^*}
\tag{$\text{BC}^*$}
\FC_1 (f_E^* K) \simeq f_{E'}^* \FC(K)
\end{equation*}
\vspace{-.7cm}
\begin{equation*}
\label{eq:bc_!}
\tag{$\text{BC}_!$}
\FC({f_E}_!\ K_1) \simeq {f_{E'}}_!\ \FC_1(K_1)
\end{equation*}
for $K$ in $D^b_c(E,\EM)$ and $K_1$ in $D^b_c(E_1,\EM)$.
\end{proposition}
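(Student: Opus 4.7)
The plan is to reduce both isomorphisms to a combination of proper base change, the projection formula, and the identity $\mu_1 = \mu\circ F$ where $F:E_1\times_{S_1}E_1'\to E\times_S E'$ is the natural morphism. The first thing I would verify is that the various squares appearing in the big diagram are Cartesian. Concretely, writing $E_1 = S_1\times_S E$ and $E_1' = S_1\times_S E'$, one has canonically
\[
E_1\times_{S_1}E_1' \;\simeq\; S_1\times_S(E\times_S E'),
\]
so the square with vertical arrows $\pr,\pr_1$ (resp.\ $\pr',\pr'_1$) and horizontal arrows $f_E,F$ (resp.\ $f_{E'},F$) is Cartesian. The compatibility $\mu_1=\mu\circ F$ is immediate from the construction of $F$ and the functoriality of the canonical pairing. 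The ranks of $E_1$ and $E$ agree, so the shifts in the two Fourier transforms match.

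For \eqref{eq:bc^*} I would start on the right-hand side. Proper base change on the Cartesian square $(\pr',f_{E'};\pr'_1,F)$ gives $f_{E'}^*\,\pr'_!\simeq(\pr'_1)_!\,F^*$, hence
\[
f_{E'}^*\,\FC(K)\;\simeq\;(\pr'_1)_!\bigl(F^*\pr^*K\otimes^\LM F^*\mu^*\LC_\psi\bigr)[r].
\]
Then I apply $F^*\pr^*=\pr_1^*\,f_E^*$ (commutativity of the square) and $F^*\mu^*\LC_\psi=\mu_1^*\LC_\psi$, which turns the right-hand side into $\FC_1(f_E^*K)$.

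For \eqref{eq:bc_!} I would start on the left-hand side. Proper base change on the Cartesian square $(\pr,f_E;\pr_1,F)$ yields $\pr^*\,{f_E}_!\simeq F_!\,\pr_1^*$, so
\[
\FC({f_E}_!K_1)\;\simeq\;\pr'_!\bigl(F_!\pr_1^*K_1\otimes^\LM\mu^*\LC_\psi\bigr)[r].
\]
The projection formula applied to $F$ then gives
\[
F_!\pr_1^*K_1\otimes^\LM\mu^*\LC_\psi \;\simeq\; F_!\bigl(\pr_1^*K_1\otimes^\LM F^*\mu^*\LC_\psi\bigr) \;=\;F_!\bigl(\pr_1^*K_1\otimes^\LM\mu_1^*\LC_\psi\bigr),
\]
and the identity $\pr'\circ F=f_{E'}\circ\pr'_1$ gives $\pr'_!\,F_!=(f_{E'})_!\,(\pr'_1)_!$, producing ${f_{E'}}_!\,\FC_1(K_1)$.

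There is no real obstacle here beyond bookkeeping: once one checks that the relevant squares are Cartesian and that $\mu$ pulls back to $\mu_1$, the result is a formal consequence of the six-functor formalism, and in particular the arguments are insensitive to whether the coefficients are $\KM$, $\OM$ or $\FM$, since proper base change and the projection formula hold with torsion coefficients (for $\ell\neq p$) in exactly the same form as in the $\ell$-adic case recalled in Laumon's exposition.
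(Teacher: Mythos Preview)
Your argument is correct and is exactly the standard proof: proper base change for the two Cartesian squares involving $F$, the projection formula for $F$, and the identity $\mu_1=\mu\circ F$. The paper does not supply its own proof of this proposition but refers to Laumon's exposition and the author's thesis, where the same six-functor manipulations are carried out; your write-up is precisely the ``pleasant exercise'' alluded to at the start of Section~\ref{sec:fourier}.
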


The next result shows that the Fourier-Deligne transform
preserves equivariance.

\begin{proposition}
\label{prop:G eq}
Let $G$ be a smooth affine group scheme over $S$, acting linearly on
the vector bundle $E \elem{\pi} S$, let $K$ and $L$ be two objects in
$D^b_c(E,\EM)$, and let $M$ be an object in $D^b_c(G,\EM)$. We denote
by $m : G \times_S E \to E$ the action of $G$ on $E$, and by $m' : G
\times_S E' \to E'$ the contragredient action, defined by $m'(g,e') =
{}^t g^{-1}.e'$. Then each isomorphism
\[
m^* K \simeq M \boxtimes^\LM_S L
\]
in $D^b_c(G\times_S E, \EM)$ induces canonically an isomorphism
\begin{equation*}
\label{eq:G eq}
\tag{$G$-EQ}
m'^*\FC(K) \simeq M \boxtimes^\LM_S \FC(L)
\end{equation*}
in $D^b_c(G\times_S E', \EM)$.
\end{proposition}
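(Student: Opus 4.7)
The plan is to deduce the desired isomorphism by applying the relative Fourier--Deligne transform $\FC^G$ for the pulled-back vector bundle $G \times_S E \to G$ (with dual $G \times_S E' \to G$) to the given isomorphism $m^* K \simeq M \boxtimes^\LM_S L$. This reduces the proposition to establishing two natural isomorphisms:
$\FC^G(m^* K) \simeq m'^* \FC(K)$, and $\FC^G(M \boxtimes^\LM_S L) \simeq M \boxtimes^\LM_S \FC(L)$.

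The second isomorphism is a K\"unneth-style computation. Writing $M \boxtimes^\LM_S L$ as a tensor product of pullbacks from $G$ and $E$, the projection formula combined with the base change \eqref{eq:bc^*} applied to $G \to S$ shows that $\FC^G$ acts on the $L$-factor only, leaving $M$ untouched. The first isomorphism is the substantial step, and it encodes the defining property of the contragredient action, namely the pairing invariance $\mu(g \cdot e, {}^tg^{-1} \cdot e') = \mu(e, e')$. To prove it, I would apply the base change \eqref{eq:bc_!} to the Cartesian square
\[
\xymatrix{
G \times_S E \times_S E' \ar[r]^-{\tilde m'} \ar[d]_-{\tilde \pr'} & E \times_S E' \ar[d]^-{\pr'} \\
G \times_S E' \ar[r]_-{m'} & E'
}
\]
with $\tilde m'(g, e, e') = (e, {}^tg^{-1} \cdot e')$ and $\tilde \pr'(g, e, e') = (g, e')$, and rewrite $m'^* \FC(K)$ as the pushforward by $\tilde \pr'$ of $p_E^* K \otimes^\LM \nu^* \LC_\psi[r]$, where $p_E(g, e, e') = e$ and $\nu(g, e, e') = \mu(e, {}^tg^{-1} \cdot e')$. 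Unraveling the definition, $\FC^G(m^* K)$ is instead the pushforward by $\tilde \pr'$ of $\mu_m^* K \otimes^\LM \mu_G^* \LC_\psi[r]$, where $\mu_m(g, e, e') = g \cdot e$ and $\mu_G(g, e, e') = \mu(e, e')$. The automorphism $\phi(g, e, e') = (g, g \cdot e, e')$ of $G \times_S E \times_S E'$ commutes with $\tilde \pr'$ and intertwines the two expressions, thanks to the identity $\mu(g^{-1} \cdot e, e') = \mu(e, {}^tg^{-1} \cdot e')$, which is equivalent to pairing invariance.

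The main difficulty is bookkeeping: assembling the various Cartesian squares and tracking how the Artin--Schreier twist transforms under each change of variables. Once this is done the argument is purely formal; it uses only the standard six-functor formalism together with the properties of $\FC$ recalled above, and therefore applies uniformly for coefficients $\EM = \KM,\OM,\FM$.
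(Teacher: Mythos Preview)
The paper does not actually prove Proposition~\ref{prop:G eq}: Section~\ref{sec:fourier} only \emph{recalls} the properties of the Fourier--Deligne transform from \cite{Lau} and refers to \cite[Chapter~5]{Juteau:thesis} for details. So there is no proof in the paper to compare against; what matters is whether your argument is sound, and it is.

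Your strategy---apply the Fourier--Deligne transform $\FC^G$ relative to the base $G$ and identify both sides---is exactly the right one. The K\"unneth step for $\FC^G(M\boxtimes^\LM_S L)$ is correct: writing $M\boxtimes^\LM_S L = p_G^*M \otimes f_E^*L$, the projection formula pulls $p_G^*M$ through $\pr'_{G!}$, and then \eqref{eq:bc^*} gives $\FC^G(f_E^*L)\simeq f_{E'}^*\FC(L)$. For the other step, your direct Cartesian-square computation is fine, but note that what you are invoking is ordinary proper base change for the square you drew, not the formula \eqref{eq:bc_!} of Proposition~\ref{prop:bc} (which concerns base change in $S$, not pullback along an arbitrary map to $E'$). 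A slightly slicker route, using only the results already stated in the paper, is to factor $m = f_E\circ\alpha$ where $\alpha:G\times_S E\to G\times_S E$ is the vector-bundle automorphism $(g,e)\mapsto(g,g\cdot e)$ over $G$; then \eqref{eq:bc^*} handles $f_E^*$ and \eqref{eq:morphism} (applied to the isomorphism $\alpha$) handles $\alpha^*$, with the transpose of $\alpha^{-1}$ being precisely $(g,e')\mapsto(g,{}^tg^{-1}\cdot e')$, so that $f_{E'}\circ(\alpha^{-1})' = m'$. Either way the content is the same pairing-invariance identity you isolated.
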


One could have used $\pr'_!$ instead of $\pr'_*$ to define an a priori
different notion of Fourier-Deligne transform. The following
fundamental result shows that both are canonically isomorphic
\cite[Appendice 2.4]{KaLa}.

\begin{theorem}
\label{th:oubli support}
For any object $K$ in $D^b_c(E,\EM)$, the support forgetting morphism
\begin{equation*}
\label{eq:oubli support}
\tag{SUPP}
\pr'_!\ (\pr^* K \otimes^\LM_\EM \mu^* \LC) \longto \pr'_*\ (\pr^* K \otimes^\LM_\EM \mu^* \LC)
\end{equation*}
is an isomorphism.
\end{theorem}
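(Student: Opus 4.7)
The strategy is the one in [KaLa]: exhibit a proper compactification of $\pr'$, identify the cone of the support-forgetting map with a boundary contribution, and show that this contribution vanishes via the wild ramification of $\LC_\psi$ at infinity.

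\smallskip

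\emph{Step 1: Compactification.} Take the projective completion $\bar E = \PM(E \oplus \OC_S) \to S$ of the vector bundle $E$, and form the fiber product $\bar E \times_S E' \to E'$; this extends $\pr'$ to a proper morphism $\bar\pr'$. Denote by $\bar j : E \times_S E' \hookrightarrow \bar E \times_S E'$ the open inclusion and by $\bar i$ the closed immersion of the hyperplane at infinity $H \times_S E'$. Setting $F = \pr^* K \otimes^L \mu^* \LC$, the distinguished triangle
\[
\bar j_! F \longto R\bar j_* F \longto \bar i_* \bar i^* R\bar j_* F \triright
\]
together with proper base change for $\bar\pr'$ identifies $\pr'_! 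F \simeq \bar\pr'_*\, \bar j_! F$ and $\pr'_* F \simeq \bar\pr'_*\, R\bar j_* F$. Thus the cone of \eqref{eq:oubli support} is $\bar\pr'_*\, \bar i_* \bar i^* R\bar j_* F$, and it suffices to prove $\bar i^* R\bar j_* F = 0$.

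\smallskip

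\emph{Step 2: Factoring out $K$.} Let $\bar\pr : \bar E \times_S E' \to \bar E$ be the projection extending $\pr$, and let $\tilde K$ be any extension of $K$ along $E \hookrightarrow \bar E$ (for instance $R(j_E)_* K$). Then $\bar j^* \bar\pr^* \tilde K = \pr^* K$, so the projection formula for $\bar j$ gives
\[
R\bar j_* F \simeq \bar\pr^* \tilde K \otimes^L R\bar j_*\, \mu^* \LC.
\]
Hence it is enough to show $\bar i^* R\bar j_*\, \mu^* \LC = 0$.

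\smallskip

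\emph{Step 3: Wild ramification.} The pairing $\mu$ extends to a rational map $\bar\mu : \bar E \times_S E' \dashrightarrow \PM^1$; at a geometric point of $H \times_S E'$ where $\bar\mu$ is defined with value $\infty$, the stalk of $R\bar j_*\, \mu^* \LC$ identifies with (the pullback of) the stalk at $\infty$ of $R\jmath_* \LC_\psi$, where $\jmath : \AM^1 \hookrightarrow \PM^1$. This stalk vanishes because $\LC_\psi$ is wildly ramified at $\infty$ (equivalently, $\jmath_! \LC_\psi \weq R\jmath_* \LC_\psi$). At the remaining points, which lie over the indeterminacy locus $H \times_S \s'(S)$, one resolves the indeterminacy by blowing up, so that $\bar\mu$ lifts to a genuine morphism $\tilde\mu$ to $\PM^1$; in local coordinates on the blow-up, $\tilde\mu$ becomes monomial and the previous Artin--Schreier vanishing applies along each exceptional divisor. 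Properness of the blow-up transports the vanishing back to $\bar E \times_S E'$.

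\smallskip

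\emph{Main obstacle.} The one place where care is needed is the local computation on the resolution: one must verify that, in each affine chart of the blow-up around an exceptional divisor, $\tilde\mu$ is given by a monomial whose Artin--Schreier pullback still has vanishing nearby cycles. This is a classical but slightly delicate calculation carried out in \cite[Appendice 2.4]{KaLa}, which we follow.
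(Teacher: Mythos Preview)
The paper does not give a proof of this theorem; it simply records it and cites \cite[Appendice~2.4]{KaLa}. So there is no ``paper's own proof'' to compare against directly---your sketch is an attempt to summarise that cited argument, and the overall strategy (compactify $\pr'$, reduce to vanishing of $\bar i^*R\bar j_*F$, exploit wild ramification of $\LC_\psi$ at infinity) is indeed the right one.

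That said, Step~2 has a genuine gap. The projection formula $R\bar j_*(\bar j^*A\otimes^L B)\simeq A\otimes^L R\bar j_*B$ holds when $A$ is perfect (for instance a local system), but \emph{not} for an arbitrary constructible complex $A=\bar\pr^*\tilde K$. Concretely: if $K$ itself carries wild Artin--Schreier-type ramification along $H\subset\bar E$, then at suitable boundary points this can cancel against the ramification of $\mu^*\LC$, so that $\bar i^*R\bar j_*\,\mu^*\LC=0$ does \emph{not} force $\bar i^*R\bar j_*F=0$. Thus ``factoring out $K$'' is not a legitimate reduction here. A smaller point: for $r\geq 2$ the indeterminacy locus of $\bar\mu$ is not $H\times_S\sigma'(S)$ but the full incidence variety $\{(h,e')\in H\times_S E':\langle h,e'\rangle=0\}$ (in homogeneous coordinates $[e_0:\cdots:e_r]$ on $\bar E$ with $H=\{e_0=0\}$ one has $\bar\mu=[\sum_i e_ie'_i:e_0]$), so the resolution in Step~3 is more involved than a single monomial blow-up. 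The argument in \cite{KaLa} does not proceed via a global projection formula; the boundary vanishing is established by a direct local analysis, and you should follow their computation rather than the shortcut in Step~2.
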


This theorem implies the powerful property that Fourier transform
commutes with duality.

\begin{theorem}
\label{th:rhom fourier}
We have a functorial isomorphism
\[
\RHOM(\FC_\psi(K), \pi'^! L) \simeq \FC_{\psi^{-1}}(\RHOM(K,\pi^! L)) (r)
\]
for $(K,L)$ in $D^b_c(E,\EM)^\op \times D^b_c(S,\EM)$.
\end{theorem}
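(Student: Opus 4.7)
The plan is to unwind the left-hand side using standard six-functor identities and then invoke Theorem~\ref{th:oubli support} to turn a morphism into an isomorphism. First I would apply the adjunction $(\pr'_!, \pr'^!)$ to peel off $\pr'_!$, together with the equality $\pr'^! \pi'^! = (\pi' \pr')^! = (\pi \pr)^! = \pr^! \pi^!$ which holds because the two compositions $E\times_S E' \to S$ coincide. This gives
\[
\RHOM(\FC_\psi(K),\, \pi'^! L)
\;\simeq\;
\pr'_* \, \RHOM\bigl(\pr^* K \otimes^L \mu^* \LC_\psi,\; \pr^! \pi^! L\bigr)[-r].
\]
Since $\mu^* \LC_\psi$ is an invertible $\EM$-local system of rank $1$ with inverse $\mu^* \LC_{\psi^{-1}}$, tensoring with it is an auto-equivalence. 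Combined with the tensor-Hom adjunction, this lets one rewrite the internal $\RHOM$ as $\mu^* \LC_{\psi^{-1}} \otimes^L \RHOM(\pr^* K,\, \pr^! \pi^! L)$.

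Next I would invoke the formal identity $\RHOM(f^* A,\, f^! B) \simeq f^! \RHOM(A, B)$ (a consequence of the $(f_!, f^!)$-adjunction and the projection formula) applied to $f = \pr$, producing
\[
\RHOM(\pr^* K,\, \pr^! \pi^! L) \;\simeq\; \pr^! \RHOM(K, \pi^! L).
\]
The projection $\pr : E \times_S E' \to E$ is a vector bundle of rank $r$, hence smooth of relative dimension $r$, so $\pr^! = \pr^*(r)[2r]$. Assembling these steps, the shifts $[-r]$ and $[2r]$ combine to $[r]$, and one obtains
\[
\pr'_* \bigl(\pr^* \RHOM(K, \pi^! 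L) \otimes^L \mu^* \LC_{\psi^{-1}}\bigr)[r](r).
\]

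The final step is to replace $\pr'_*$ by $\pr'_!$, which is precisely Theorem~\ref{th:oubli support} applied to the object $\RHOM(K,\pi^! L) \in D^b_c(E,\EM)$ and the character $\psi^{-1}$. The resulting expression is by definition $\FC_{\psi^{-1}}(\RHOM(K, \pi^! L))(r)$, as desired; functoriality in $(K,L)$ is automatic because each of the isomorphisms used is functorial. The only real obstacle is the bookkeeping of shifts and Tate twists: one has to check that the smoothness of $\pr$ contributes exactly the shift and twist needed to absorb those coming from the adjunction and from the definition of $\FC_\psi$. The substantive input, making the whole argument more than a one-sided natural transformation, is the support-forgetting isomorphism \eqref{eq:oubli support}; without it one would only get a canonical morphism in one direction rather than a genuine duality.
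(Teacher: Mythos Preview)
Your argument is correct and is precisely the standard derivation: push the $\RHOM$ across $\pr'_!$ via the local adjunction $\RHOM(f_! A,B)\simeq f_*\RHOM(A,f^! B)$, use $\pi'\circ\pr' = \pi\circ\pr$, invert the rank-one local system $\mu^*\LC_\psi$, apply $\RHOM(f^*A,f^!B)\simeq f^!\RHOM(A,B)$ for $f=\pr$, convert $\pr^!$ to $\pr^*(r)[2r]$ by smoothness, and finally invoke \eqref{eq:oubli support} to swap $\pr'_*$ for $\pr'_!$. The bookkeeping of shifts and twists checks out: the $[-r]$ from unwinding the definition and the $[2r]$ from $\pr^!=\pr^*(r)[2r]$ combine to the $[r]$ needed for $\FC_{\psi^{-1}}$, with the single Tate twist $(r)$ left over.

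As for comparison with the paper: the paper does not actually supply a proof of this theorem. Section~\ref{sec:fourier} is explicitly a recollection of facts from Laumon's exposition, with the remark that ``it is a pleasant exercise to fill in the details'' and a pointer to the author's thesis for those details. Your write-up is exactly the exercise the paper has in mind, and the essential input you identify --- that \eqref{eq:oubli support} is what upgrades a one-sided natural transformation to an isomorphism --- is the right emphasis.
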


Remember that, if $X$ is a variety, we denote by $\DC_{X,\EM}$ the
duality functor of $D^b_c(X,\EM)$.  If
$a : X \to \Spec k$ is the structural morphism, we denote by
$D_{X,\EM}$ the dualizing complex $a^! \EM$.

\begin{corollary}
\label{cor:dual fourier}
We have a functorial isomorphism
\[
\DC_{E',\EM} (\FC_\psi(K)) \simeq \FC_{\psi^{-1}} (\DC_{E,\EM}(K)) (r)
\]
for $K$ in $D^b_c(E,\EM)^\op$.
\end{corollary}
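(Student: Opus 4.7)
The plan is to deduce Corollary \ref{cor:dual fourier} directly from Theorem \ref{th:rhom fourier} by specializing the auxiliary object $L$ to the dualizing complex of the base.

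First, I would recall the standard compatibility between upper-shriek pullback and dualizing complexes. If $a_S : S \to \Spec k$ denotes the structural morphism of $S$, then the structural morphisms of $E$ and $E'$ factor as $a_E = a_S \circ \pi$ and $a_{E'} = a_S \circ \pi'$, so that
\[
D_{E,\EM} = a_E^! \EM \simeq \pi^! a_S^! \EM = \pi^! D_{S,\EM},
\qquad
D_{E',\EM} \simeq \pi'^! D_{S,\EM}.
\]

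Next, I would take $L = D_{S,\EM}$ in the statement of Theorem \ref{th:rhom fourier}. The theorem then provides, for any $K$ in $D^b_c(E,\EM)^{\op}$, a functorial isomorphism
\[
\RHOM\bigl(\FC_\psi(K),\, \pi'^! D_{S,\EM}\bigr) \simeq \FC_{\psi^{-1}}\bigl(\RHOM(K, \pi^! D_{S,\EM})\bigr)(r).
\]
Using the identifications above, the left-hand side is $\RHOM(\FC_\psi(K), D_{E',\EM}) = \DC_{E',\EM}(\FC_\psi(K))$ by definition of Verdier duality, and similarly the inner $\RHOM$ on the right becomes $\DC_{E,\EM}(K)$. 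This yields exactly
\[
\DC_{E',\EM}(\FC_\psi(K)) \simeq \FC_{\psi^{-1}}(\DC_{E,\EM}(K))(r),
\]
as required.

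There is really no hard step here: the entire content is packaged in Theorem \ref{th:rhom fourier}, whose proof (via the support-forgetting isomorphism of Theorem \ref{th:oubli support}) is where the real work has already been done. The only point that deserves a moment of care is the functoriality in $L$ of the isomorphism of Theorem \ref{th:rhom fourier}, which one needs in order to legitimately specialize $L$ to $D_{S,\EM}$; this functoriality is built into the construction, since the isomorphism arises from the projection formula and base change applied to the diagram defining $\FC_\psi$, both of which are natural in $L$.
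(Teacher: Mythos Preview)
Your argument is correct and is exactly the intended one: the paper states this result as an immediate corollary of Theorem \ref{th:rhom fourier} without further proof, and specializing $L = D_{S,\EM}$ together with the identifications $\pi^! D_{S,\EM} \simeq D_{E,\EM}$ and $\pi'^! D_{S,\EM} \simeq D_{E',\EM}$ is precisely how one unpacks it.
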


\begin{theorem}
\label{th:equiv perv}
$\FC$ maps ${}^p\MC(E,\EM)$ onto ${}^p\MC(E',\EM)$. The functor
\begin{equation*}
\label{eq:equiv perv}
\tag{EQUIV}
\FC : {}^p\MC(E,\EM) \longto {}^p\MC(E',\EM)
\end{equation*}
is an equivalence of abelian categories, with quasi-inverse $a^* \FC'(-)(r)$.
\end{theorem}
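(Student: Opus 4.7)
The plan is to reduce the $t$-exactness statement to the $t$-exactness properties of the individual functors that appear in the definition of $\FC$, crucially exploiting the support-forgetting isomorphism \eqref{eq:oubli support} to get the two-sided bound. Once $t$-exactness is known, the equivalence of abelian categories follows formally from the equivalence of derived categories in Corollary~\ref{cor:equiv db}.

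More precisely, fix $K \in {}^p\MC(E,\EM)$ and consider the intermediate complex
\[
M := \pr^* K \otimes^\LM_\EM \mu^* \LC_\psi \, [r].
\]
First I would check that $M$ is perverse. Since $\pr : E \times_S E' \to E$ is a vector bundle projection of rank $r$, it is smooth of relative dimension $r$, hence $\pr^*[r]$ is $t$-exact. Tensoring with the rank-one local system $\mu^*\LC_\psi$ is $t$-exact as well (it amounts locally to a shift by zero of a lisse rank-one sheaf). Thus $M \in {}^p\MC(E \times_S E', \EM)$.

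Next, observe that $\pr' : E \times_S E' \to E'$ is also a vector bundle projection, hence affine. By Artin's vanishing theorem (in the two dual forms used in \cite{BBD}), $\pr'_!$ is right $t$-exact and $\pr'_*$ is left $t$-exact for the perverse $t$-structure. Applying these to the perverse sheaf $M$ yields
\[
\pr'_! M \in {}^p D^{\leq 0}(E',\EM), \qquad \pr'_* M \in {}^p D^{\geq 0}(E',\EM).
\]
By Theorem~\ref{th:oubli support}, the canonical morphism $\pr'_! M \to \pr'_* M$ is an isomorphism, so this common object lies in ${}^p D^{\leq 0} \cap {}^p D^{\geq 0} = {}^p\MC(E',\EM)$. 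This is exactly $\FC_\psi(K)$, so $\FC_\psi$ sends perverse sheaves to perverse sheaves. The same argument, applied to the dual bundle $E' \to S$, shows that $\FC'$ is $t$-exact as well; since the isomorphism $a : E \isom E''$ and the Tate twist $(r)$ are obviously $t$-exact, the quasi-inverse $a^*\FC'(-)(r)$ of Corollary~\ref{cor:equiv db} restricts to a functor ${}^p\MC(E',\EM) \to {}^p\MC(E,\EM)$. Combining these two $t$-exact functors with the derived equivalence of Corollary~\ref{cor:equiv db} yields the asserted equivalence of abelian categories.

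The key step is the sandwiching argument using \eqref{eq:oubli support}: without it, one only gets $\pr'_! M \in {}^p D^{\leq 0}$, which is not enough. I expect this to be the only nontrivial point, since the rest is a straightforward combination of classical $t$-exactness properties (smooth pullback, Artin vanishing) that are insensitive to whether the coefficients are $\KM$, $\OM$ or $\FM$, so the proof works uniformly in all three cases.
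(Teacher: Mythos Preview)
Your argument is correct and is precisely the standard proof: use smoothness of $\pr$ to see that $\pr^*[r]$ is $t$-exact, tensor with the rank-one local system $\mu^*\LC_\psi$, then sandwich using Artin vanishing for the affine morphism $\pr'$ together with the key isomorphism \eqref{eq:oubli support}. The paper does not give its own proof of this theorem; Section~\ref{sec:fourier} explicitly recalls these facts from \cite{Lau} and refers to \cite[Chapter~5]{Juteau:thesis} for details, and the argument recorded there is the one you have written.
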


Moreover, by Proposition \ref{prop:G eq} \eqref{eq:G eq}, $\FC$
sends $G$-equivariant perverse sheaves on $G$-equivariant perverse
sheaves (take the constant perverse sheaf on $G$ for $M$).

\begin{corollary}
\label{cor:fourier simple}
Suppose $\EM = \KM$ or $\FM$. Then $\FC$ transforms simple
$\EM$-perverse sheaves on $E$ into simple $\EM$-perverse sheaves on
$E'$.
\end{corollary}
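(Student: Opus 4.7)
The plan is to invoke Theorem \ref{th:equiv perv} and observe that the rest is purely formal. Theorem \ref{th:equiv perv} already does the essential work: it asserts that
\[
\FC : {}^p\MC(E,\EM) \longto {}^p\MC(E',\EM)
\]
is an equivalence of abelian categories, with explicit quasi-inverse $a^*\FC'(-)(r)$. Granted this, preservation of simples is a categorical formality.

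Indeed, simplicity of an object in an abelian category is a categorical property: $S$ is simple if and only if $S\neq 0$ and its only subobjects are $0$ and $S$ itself. Any equivalence of abelian categories preserves and reflects monomorphisms, and therefore induces a bijection between the subobject lattice of $S$ and that of $\FC(S)$. Consequently $S$ is simple if and only if $\FC(S)$ is simple. I would write this as one line at the end of the proof.

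The hypothesis $\EM = \KM$ or $\FM$ is there only to guarantee that $\EM$ is a field, so that the categories ${}^p\MC(E,\EM)$ and ${}^p\MC(E',\EM)$ are of finite length and their simple objects are honestly the intermediate extensions $\p j_{!*}$ of simple local systems on strata (the analogous statement fails over $\OM$, where one must also distinguish the $p$ and $p_+$ perversities and deal with torsion phenomena, as recalled in \S\ref{subsec:IC}). There is therefore no real obstacle: the entire content of the corollary has already been absorbed into the proof of Theorem \ref{th:equiv perv}.
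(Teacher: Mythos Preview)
Your proposal is correct and matches the paper's approach exactly: the paper gives no explicit proof for this corollary, treating it as an immediate consequence of Theorem~\ref{th:equiv perv}, which is precisely what you do. Your additional remarks on why the hypothesis $\EM = \KM$ or $\FM$ is imposed are accurate and helpful, though not strictly needed for the argument.
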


This will play a crucial role in the construction of Springer correspondence.
Again, we have a version with $G$-equivariant perverse sheaves.

\section{Springer correspondence}
\label{sec:springer}

\subsection{The geometric context}\label{subsec:geometric context}

\subsubsection{Notation}

Let $G$ be a split connected semisimple linear algebraic group of rank $r$
over $k$. Let us fix a Borel
subgroup $B$ of $G$, with unipotent radical $U$, and a maximal torus
$T$ contained in $B$. We denote by $\gg$, $\bg$, $\ug$ and $\tg$ the
corresponding Lie algebras. The characters of $T$ form a free abelian
group $X(T)$ of rank $r$.  The Weyl group $W = N_G(T) / T$ acts as a
reflection group on $V = \QM \otimes_\ZM X(T)$.

Let $\Phi \subset X(T)$ be the root system of $(G,T)$, $\Phi^+$ the
set of positive roots defined by $B$, and $\D$ the corresponding
basis. We denote by $\nu_G$ (or just $\nu$) the cardinality of
$\Phi^+$. Then $\dim G = 2\nu +r$, $\dim B = \nu + r$, $\dim T = r$
and $\dim U = \nu$.

\subsubsection{The finite quotient map}

Let $\phi : \tg \to \tg/W$ be the quotient map, corresponding to the
inclusion $k[\tg]^W \hookrightarrow k[\tg]$. It is finite and
surjective. For $t \in \tg$, we will also denote $\phi(t)$ by $\ov t$.

Let us assume that $p$ is not a torsion prime for $\gg$.  Then
$k[\tg]^W = k[\phi_1,\ldots,\phi_r]$ for some algebraically
independent homogeneous polynomials $\phi_1,\ldots,\phi_r$ whose degrees
$d_1 \leqslant \ldots \leqslant d_r$ are well defined and called
\emph{characteristic degrees} \cite[Théorème 3]{DEM}.
We have $d_i = m_i + 1$, where the $m_i$ are the exponents of $W$
(determining the eigenvalues of a Coxeter element) \cite[Chap. VI, \S 6.2]{BOUR456}.
The quotient space $\tg/W$ can be identified with $\AM^r$ and $\phi$ with
$(\phi_1,\ldots,\phi_r)$.

For example, if $G = SL_n$, we can identify $\tg$ with the hyperplane
$\{ (x_1,\ldots,x_n) \mid x_1 + \cdots + x_n = 0 \}$ of $k^n$, and for
$\phi_i$ ($1 \leq i \leq r = n - 1$) we can take the $(i+1)$st
elementary symmetric polynomial $\sigma_{i+1}$ on $k^n$, restricted to
this hyperplane ($\s_1$ does not appear, since its restriction vanishes).

\subsubsection{The adjoint quotient}

Assume that $p > 2$ if $G$ has a component of type $C_m$, $m\geq 1$.
Then by \cite{CR} the Chevalley restriction theorem holds: 
the restriction map $k[\gg]^G \to k[\tg]^W$ is an isomorphism.
Recall that we assume moreover that $p$ is not a torsion prime for
$G$, so that the preceding paragraph applies. Let $\chi_i$, $1\leq i
\leq r$, be the element of $k[\gg]^G$ restricting to $\phi_i$.
Then the $\chi_i$ are homogeneous algebraically independent polynomials
of degrees $d_1 \leqslant \ldots \leqslant d_r$.
Hence we have a morphism $\chi = (\chi_1,\ldots,\chi_r) : \gg \to
\gg/\!/G \simeq \tg/W \simeq \AM^r$.  It is called the Steinberg map,
or the adjoint quotient. 

The morphism $\chi$ has been extensively studied (see \cite{SLO2} and the
references therein).  First, it is flat, and its schematic fibers are
irreducible, reduced and normal complete intersections, of codimension
$r$ in $\gg$.  If $t \in \tg$, let $\gg_{\ov t}$ be the fiber
$\chi^{-1}(\ov t)$.  It is the union of finitely many adjoint orbits. It
contains exactly one orbit of regular elements, which is open and
dense in $\gg_{\ov t}$, and whose complement has codimension
$\geqslant 2$ in $\gg_{\ov t}$.  This regular orbit is exactly the
smooth locus of $\gg_{\ov t}$.  So $\tg/W$ parametrizes
the orbits of regular elements.  The fiber $\gg_{\ov t}$ also
contains exactly one orbit of semisimple elements, the orbit of $t$,
which is the only closed orbit in $\gg_{\ov t}$, and which lies in the
closure of every other orbit in $\gg_{\ov t}$.
In fact, $\chi$ can be interpreted as the map which sends $x$ to the
intersection of the orbit of $x_s$ with $\tg$, which is a $W$-orbit.

For example, for $G = SL_n$, we can define the $\chi_i : \sg\lg_n \to
k$ by the formula
\[
\det(\xi - x) = \xi^n + \sum_{i = 1}^{n - 1} (-1)^{i + 1} \chi_i(x) \xi^{n - 1 - i} \in k[\xi]
\]
for $x \in \sg\lg_n$. So $\chi(x)$ can be interpreted as the
characteristic polynomial of $x$. Restricting $\chi_i$ to $\tg$, we
recover $\phi_i = \sigma_{i+1}$.

\subsubsection{Springer's resolution of the nilpotent cone}

Let $\NC$ be the closed subvariety of $\gg$ consisting of its
nilpotent elements.  It is the fiber $\gg_{0} = \chi^{-1}(0)$. In
particular, it is a complete intersection in $\gg$, given by the
equations $\chi_1(x) = \cdots = \chi_r(x) = 0$.  It is singular. We
are going to describe Springer's resolution of the nilpotent cone.

The set $\BC$ of Borel subalgebras of $\gg$ is a homogeneous space
under $G$, in bijection with $G/B$, since the normalizer of $\bg$ in
$G$ is $B$. Hence $\BC$ is endowed with a structure of smooth
projective variety, of dimension $\nu$.

Let $\NCt = \{ (x,\bg') \in \NC \times \BC \mid x \in \bg' \}$.
It is a smooth variety of dimension $2\nu = \dim\NC$: the second projection makes it
a vector bundle over $\BC$, which can be identified with $G\times^B \ug$
via $g *^B x \mapsto (\Ad(g)(x), \Ad(g)(\bg))$. It can be further identified with the
cotangent bundle $T^*\BC$, since $T\BC = T(G/B) = G \times^B (\gg/\bg)$
and $\ug = \bg^\perp$.  Now let $\pi_\NC:\NCt\to\NC$ be the first
projection. Since $\NCt$ is closed in $\NC \times \BC$ and $\BC$ is
projective, the morphism $\pi_\NC$ is projective. Moreover, it is an
isomorphism over the open dense subvariety of $\NC$ consisting of the
regular nilpotent elements, so $\pi_\NC$ is indeed a resolution of
$\NC$.

\subsubsection{Grothendieck's simultaneous resolution of the adjoint quotient}

In the last paragraph, we have seen the resolution of the fiber
$\chi^{-1}(0)$.  We are now going to explain Grothendieck's
simultaneous resolution, which gives resolutions for all the fibers of
$\chi$ simultaneously.

Let $\tilde\gg = \{ (x,\bg') \in \gg \times \BC \mid x \in \bg' \}$
and let $\pi:\tilde\gg\to\gg$ be the first projection. With the second projection,
$\tilde\gg$ is a vector bundle over $\BC$, isomorphic to
$G \times^B \bg$ via $g *^B x \mapsto (\Ad(g)(x), \Ad(g)(\bg))$.
Hence $\tilde\gg$ is a smooth variety of dimension $2\nu + r = \dim \gg$.
Finally, we define $\theta$ as the composition
$\tilde\gg \simeq G\times^B \bg \to \bg/[\bg,\bg]\stackrel{\sim}{\to} \tg$.
Then the commutative diagram
\[
\xymatrix{
\tilde \gg
\ar[r]^\pi
\ar[d]_\theta
& \gg \ar[d]^\chi
\\
\tg \ar[r]_\phi
& \tg/W
}
\]
is a simultaneous resolution of the
singularities of the flat morphism $\chi$.  That is, $\theta$ is
smooth, $\phi$ is finite surjective, $\pi$ is proper, and $\pi$
induces a resolution of singularities $\theta^{-1}(t) \to
\chi^{-1}(\phi(t))$ for all $t \in \tg$.

\subsection{Springer correspondence for $\EM W$}
\label{subsec:springer}

If $\EM = \KM$ or $\FM$, let $\PG_\EM$ denote the set of pairs
$(\OC,\LC)$, where $\OC$ is some nilpotent orbit and $\LC$ is an
irreducible $G$-equivariant local system on $\OC$. Those pairs index
the simple $G$-equivariant perverse sheaves on $\NC$. So their classes
form a basis of the Grothendieck group of the category of
$G$-equivariant perverse sheaves on $\NC$. In formulas, we have
\[
K_0(\p \MC(\NC,\EM)) \simeq \bigoplus_\OC K_0(\EM A_G(\OC)) \simeq \bigoplus_{(\OC,\LC) \in \PG_\EM} \ZM [\ic(\ov\OC,\LC)].
\]

The aim of this subsection is to define a Springer correspondence for
$\EM W$, that is an injection $\Psi_\EM : \Irr \EM W \to \PG_\EM$, valid in
both cases $\EM = \KM$ and $\EM = \FM$. 

\subsubsection{The perverse sheaves $\KC_\rs$, $\KC$ and $\KC_\NC$}

Let us consider the following commutative diagram with cartesian squares:

\[
\xymatrix{
\tilde\gg_\rs
\ar[d]_{\pi_\rs}
\ar@<-0.5ex>@{^{(}->}[r]^{\tilde j_\rs}
\ar@{}[dr] | {{\boxempty_\rs}}
&
\tilde\gg
\ar[d]^\pi
\ar@{}[dr] | {{\boxempty_\NC}}
&
\NCt
\ar@<0.5ex>@{_{(}->}[l]_{i_\NCt}
\ar[d]^{\pi_\NC}
\\
\gg_\rs
\ar@<-0.5ex>@{^{(}->}[r]_{j_\rs}
&
\gg
&
\NC
\ar@<0.5ex>@{_{(}->}[l]^{i_\NC}
}
\]

Let us define the complex
\[
\KC := \pi_* \OM_{\tilde \gg} [2\nu + r].
\]

Note that $\pi_* = \pi_!$ since $\pi$ is proper. Let $\EM$ be $\KM$, $\OM$ or $\FM$.
Since modular reduction commutes with direct images,
we have $\EM \KC = \pi_* \EM_{\tilde \gg} [2\nu + r]$.
By the proper base change theorem, the fiber at a point $x$ in $\gg$ of $\EM \KC$
is given by $(\EM \KC)_x = \Rg(\BC_x, \EM)$.

Let $\KC_\rs = j_\rs^*\KC$ and $\KC_\NC = i_\NC^*\KC[-r]$.
By the proper base change theorem and the commutation between modular reduction
and inverse images, we have
\begin{gather*}
\EM\KC_\rs = j_\rs^*\EM \KC = {\pi_\rs}_* \EM_{{\tilde \gg}_\rs} [2\nu + r],\\
\EM\KC_\NC = i_\NC^*\EM \KC[-r] = {\pi_\NC}_* \EM_\NCt [2\nu].
\end{gather*}

The morphism $\pi$ is separable, proper and small, hence
$\EM\KC$ is an intersection cohomology complex by Proposition \ref{prop:small}.
The morphism $\pi_\rs$ obtained after the base change $j_\rs$
is a Galois finite \'etale covering, with Galois group $W$. In
particular, $W$ is a quotient of the étale fundamental group of
$\gg_\rs$. If $E$ is a representation of $W$ (over $\KM$, $\OM$ or
$\FM$), we will simply denote by $\un E$ the associated local system
on $\gg_\rs$, shifted by the dimension $2\nu + r$.
Then we have $\EM\KC = {j_\rs}_{!*} \EM\KC_\rs = {j_\rs}_{!*} \un{\EM W}$.
Note that, if $\EM = \OM$, we have
${}^{p_+} {j_\rs}_{!*} \KC_\rs
= \DC_{\gg,\OM} ({}^p {j_\rs}_{!*} \KC_\rs)
= \DC_{\gg,\OM} (\KC) = \KC$
so it does not matter whether we use $p$ or $p_+$ (we have used the fact that
the regular representation is self-dual, and that $\KC$ is self-dual because
$\pi$ is proper and $\tilde\gg$ is smooth).

Thus the endomorphism algebra of $\EM\KC_\rs$ is the group algebra
$\EM W$. Since the functor ${j_\rs}_{!*}$ is fully faithful, it induces an isomorphism
$\End(\KC_\rs) = \EM W \elem{\sim} \End(\KC)$.
In particular, we have an action of $\EM W$ on the stalks
$\HC^i_x(\EM\KC) = H^{i + 2 \nu + r}(\BC_x,\EM)$. 

When $\EM = \KM$, the group algebra $\KM W$ is semisimple, and so are the perverse sheaves
\[
\KM\KC_\rs \simeq \bigoplus_{E \in \Irr \KM W} \un E^{\dim E}
\]
and
\[
\KM\KC \simeq \bigoplus_{E \in \Irr \KM W} ({j_\rs}_{!*} \un E)^{\dim E}.
\]

If $\ell$ does not divide the order of the Weyl group $W$,
then we have a similar decomposition for $\EM = \OM$ or $\FM$.
However, we are mostly interested in the case where $\ell$ divides $|W|$. Then
$\FM\KC_\rs$ and $\FM\KC$ are not semisimple.
More precisely, we have decompositions
\begin{gather*}
\OM W = \bigoplus_{F \in \Irr \FM W} P_F^{\dim F}\\
\FM W = \bigoplus_{F \in \Irr \FM W} (\FM P_F)^{\dim F}
\end{gather*}
where $P_F$ is a projective indecomposable $\OM W$-module such that
$\FM P_F$ is a projective cover of $F$.
Besides, $\FM P_F$ has top and socle isomorphic to $F$.

Hence we have a similar decomposition for $\KC_\rs$, and its modular reduction:
\begin{gather*}
\KC_\rs = \bigoplus_{F \in \Irr \FM W} \un {P_F}^{\dim F},\\
\FM \KC_\rs = \bigoplus_{F \in \Irr \FM W} (\un {\FM P_F})^{\dim F}.
\end{gather*}
These are decompositions into indecomposable summands, and the
indecomposable summand $\un{\FM P_F}$ has top and socle
isomorphic to $\un F$.  By Proposition \ref{prop:top socle},
applying ${j_\rs}_{!*}$ we get decompositions into indecomposable
summands, and the indecomposable summand ${j_\rs}_{!*} (\un{\FM P_F})$ 
has top and socle isomorphic to ${j_\rs}_{!*} \un F$.
\begin{gather*}
\KC = \bigoplus_{F \in \Irr \FM W} ({j_\rs}_{!*} \un{P_F})^{\dim F}\\
\FM \KC = \bigoplus_{F \in \Irr \FM W} ({j_\rs}_{!*} \un{\FM P_F})^{\dim F}
\end{gather*}

The morphism $\pi_\NC$ is proper and semi-small, hence $\EM\KC_\NC$
is perverse. The functor ${i_\NC}^*(-)[-r]$ induces a morphism
\begin{equation}\label{mor:res}
\res : \End(\EM\KC) \longto \End(\EM\KC_\NC)
\end{equation}

\subsubsection{Springer correspondence by restriction}
\label{subsubsec:res}

\begin{theorem}
\label{th:res}
The morphism $\res$ in (\ref{mor:res}) is an isomorphism.
\end{theorem}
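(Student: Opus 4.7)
The plan is to identify both $\End(\EM\KC)$ and $\End(\EM\KC_\NC)$ with $\EM W$, and then to verify that the specific morphism $\res$ realizes this identification.

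First, $\End(\EM\KC) \simeq \EM W$ follows easily. Since $\EM\KC = {j_\rs}_{!*}\un{\EM W}$ is the intermediate extension of the local system associated to the regular representation of $W$ on the $W$-Galois \'etale cover $\pi_\rs : \tilde\gg_\rs \to \gg_\rs$, and since ${j_\rs}_{!*}$ is fully faithful, we obtain $\End(\EM\KC) \simeq \End(\un{\EM W}) \simeq \EM W$.

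Next I would compute $\End(\EM\KC_\NC)$ using the Fourier-Deligne transform from Section~\ref{sec:fourier}. The key observation is that $\NCt = G \times^B \ug$ and $\tilde\gg = G \times^B \bg$ are sub-vector bundles of the trivial bundle $\gg \times \BC \to \BC$, and under a non-degenerate $G$-invariant bilinear form on $\gg$ (which exists in our assumed characteristic) we have $\ug = \bg^\perp$. Applying Proposition~\ref{prop:sub}~\eqref{eq:sub} to the fiberwise Fourier transform over $\BC$, then applying the base-change formula~\eqref{eq:bc_!} for the proper projection $p : \gg \times \BC \to \gg$, I obtain an isomorphism
\[
\FC(i_{\NC *}\EM\KC_\NC) \simeq \EM\KC(-\nu),
\]
the shift and Tate twist coming out of \eqref{eq:sub} applied with subbundle rank $r_F = \dim\ug = \nu$ inside the total rank $\dim\gg = 2\nu + r$. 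Since $\FC$ is an equivalence of categories (Corollary~\ref{cor:equiv db}, Theorem~\ref{th:equiv perv}) and $i_{\NC *}$ is fully faithful (closed immersion), this yields $\End(\EM\KC_\NC) \simeq \End(\EM\KC) \simeq \EM W$; in particular both algebras are free $\EM$-modules of rank $|W|$.

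The main obstacle is to show the specific map $\res$ (rather than some abstract isomorphism) is an isomorphism. Over $\KM$, the perverse sheaf $\KM\KC_\NC$ is semisimple by the Decomposition Theorem (since $\pi_\NC$ is proper and semi-small with smooth source), and Lusztig's classical argument identifies the $W$-action obtained via $\res$ with the direct sum of all Springer representations, establishing that $\res$ is an algebra isomorphism over $\KM$; this gives at least injectivity of $\res$ over $\OM$. For the integral and modular parts I would compare $\res$ with the algebra map $\EM W \to \End(\EM\KC_\NC)$ coming from the Fourier identification of the previous paragraph: by \cite{AHJR}, these two constructions of the Springer correspondence differ only by composition with the sign character automorphism of $\EM W$, and hence $\res$ is an isomorphism because the Fourier map is. Alternatively, following \cite{Riche:res}, one identifies both sides with top Borel-Moore homology of $\tilde\gg \times_\gg \tilde\gg$ and of the Steinberg variety $\NCt \times_\NC \NCt$, and verifies directly that the induced restriction on BM homology is an isomorphism via a $\Gm$-contraction argument.
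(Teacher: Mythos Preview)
Your proposal is correct and follows essentially the same route as the paper: establish $\End(\EM\KC)\simeq\EM W$ by full faithfulness of $j_{\rs!*}$, use the Fourier--Deligne computation $\FC(i_{\NC*}\EM\KC_\NC)\simeq\EM\KC(-\nu)$ to get the abstract isomorphism $\End(\EM\KC_\NC)\simeq\EM W$, and then conclude that the specific map $\res$ is an isomorphism by invoking \cite{AHJR} (the sign-twist comparison) or alternatively \cite{Riche:res}. The paper sketches the $\KM$ case via the Borho--MacPherson argument rather than Lusztig's, but this is a minor expository difference; the substantive step over $\OM$ and $\FM$ is the same appeal to \cite{AHJR} or \cite{Riche:res}.
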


In the $\EM = \KM$ case this was proved by Borho and MacPherson
in \cite{BM}. They argued as follows.
The Weyl group $W$ acts on $G/T$ by $gT \cdot w = g n_w T$, where $n_w$ is
any representative of $w$ in $N_G(T)$. So $W$ acts naturally on the
cohomology complex $\Rg(G/T,\EM)$. Since the projection $G/T \to G/B$
is a locally trivial $U$-fibration, it induces an isomorphism
$\Rg(G/B,\EM) \simeq \Rg(G/T,\EM)$, and thus there is a natural action
of $W$ on $\Rg(G/B,\EM)$, and hence on the cohomology $\HH^*(G/B,\EM)$.
Let us call it the classical action. On the other hand, the stalk at
$0$ of $\KC$ is isomorphic to $\Rg(G/B,\EM)$, and 
$\EM W$ acts on it through $\EM W \simeq \End(\KC) \elem{\res}
\End(\KC_\NC)$. Let us call it the Lusztig action \cite{LusGreen}.
In fact, the two actions coincide. 
When $\EM = \KM$, one can show that the classical
action on the cohomology is the regular representation.  
Since the regular representation is faithful,
this implies that the morphism $\res$ is injective.
Then one can show that the two algebras have the same dimension
using results from \cite{STEIN2}. This proves that $\res$ is an
isomorphism in the case $\EM = \KM$.

We have
\[
\KM\KC = \bigoplus_{E \in \Irr \KM W} {j_\rs}_{!*} \un E^{\dim E}
\]
and $i_\NC^*(\KM\KC) [-r] = \KM\KC_\NC$. In fact, the restriction functor
$i_\NC^* [-r]$ sends each simple constituent ${j_\rs}_{!*} \un E$
on a simple object. The assignment
\[
\RC : E \mapsto i_\NC^* {j_\rs}_{!*} \un E [-r]
\]
is an injective map from $\Irr \KM W$ to the simple $G$-equivariant
perverse sheaves on $\NC$, which are parametrized by the pairs
$(\OC,\LC)$, where $\OC$ is a nilpotent orbit, and $\LC$ is an
irreducible $G$-equivariant $\KM$-local system on $\OC$. This is the
Springer correspondence (by restriction).

For example, for $G = SL_n$, the Specht module $S^\l$ is sent to
$\ic(\ov\OC_\l,\KM)$, where $\OC_\l$ is the nilpotent orbit
corresponding to the partition $\l$ by the Jordan normal form.

When $\EM = \OM$ or $\FM$, several difficulties arise with the
Borho-MacPherson approach to prove Theorem \ref{th:res}. First, if we
consider the cohomology of the stalk at $0$, then one loses
information, and obtains a non-faithful representation of the Weyl
group, already in the case of $G = SL_2$. It is likely that the map
from $\EM$ to the endomorphism algebra of the stalk
at $0$ considered as a (perfect) \emph{complex} of $\EM W$-modules is
injective. This would be one way to prove the injectivity, but this
complex does not seem so easy to understand. Also, working over $\OM$,
one cannot invoke a dimension argument to conclude that we have an
isomorphism.

To avoid these problems, we will use another classical approach to the
Springer theory, using a Fourier transform. This gives a different
action of the Weyl group on the Springer sheaf. Actually both actions
differ by the sign character of the Weyl group, as was proved by Hotta
in \cite{HOT} with $\KM$ coefficients, and in \cite{AHJR}
with general $\EM$ coefficients. So we can deduce Theorem \ref{th:res}
from the corresponding result about the Fourier approach.

We note that Riche has been able to prove Theorem \ref{th:res}
directly, using the Ginzburg interpretation of the endomorphism
algebra of the Springer sheaf in terms of the Borel-Moore homology of
the Steinberg variety \cite{Riche:res}.

\subsubsection{The Fourier-Deligne transform of $\EM\KC$}

We assume that there exists a non-degenerate $G$-invariant symmetric
bilinear form $\mu$ on $\gg$, so that we can identify $\gg$ with its
dual. This is the case, for example, if $p$ is very good for $G$ (take
the Killing form), or if $G = GL_n$ (take $\mu(X,Y) = \tr(XY)$).  For
a more detailed discussion, see \cite{LET}.

\begin{lemma}
The root subspace $\gg_\a$ is orthogonal to $\tg$ and to all the root subspaces
$\gg_\b$ with $\b \neq -\a$.
\end{lemma}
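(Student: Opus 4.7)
The plan is to exploit the $G$-invariance of $\mu$, which at the Lie algebra level yields the infinitesimal invariance property
\[
\mu([h,x],y) + \mu(x,[h,y]) = 0 \quad \text{for all } h \in \gg,\ x,y \in \gg.
\]
I will apply this with $h$ chosen inside $\tg$, so that the adjoint action decomposes via the root space decomposition $\gg = \tg \oplus \bigoplus_{\gamma \in \Phi} \gg_\gamma$ into the weights $\alpha(h)$ and $\beta(h)$.

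First I would treat the orthogonality of two root spaces. Take $x \in \gg_\alpha$, $y \in \gg_\beta$, and any $h \in \tg$. Then $[h,x] = \alpha(h)x$ and $[h,y] = \beta(h)y$, so the invariance identity gives
\[
\bigl(\alpha(h)+\beta(h)\bigr)\,\mu(x,y) = 0.
\]
If $\beta \neq -\alpha$, then $\alpha + \beta$ is a nonzero element of $X(T)$, and (since $p$ is assumed good enough for $\mu$ to be non-degenerate; in particular $\tg$ is large enough that roots do not vanish identically on it) there exists $h \in \tg$ with $\alpha(h) + \beta(h) \neq 0$ in $k$. Dividing gives $\mu(x,y) = 0$, as required.

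Next I would handle orthogonality between $\gg_\alpha$ and $\tg$. Take $x \in \gg_\alpha$, $y \in \tg$, and any $h \in \tg$. Then $[h,y] = 0$ since $\tg$ is abelian, so the invariance identity reduces to
\[
\alpha(h)\,\mu(x,y) = 0.
\]
Since $\alpha$ is a nonzero root, one can choose $h \in \tg$ with $\alpha(h) \neq 0$, whence $\mu(x,y) = 0$.

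The only subtle point — and essentially the only place where hypotheses on $p$ enter — is the existence of $h \in \tg$ realizing a nonzero value of $\alpha$ (resp.\ $\alpha + \beta$) in $k$. This is guaranteed under the standing assumptions on $p$ (very good for $G$, or $G = GL_n$) under which $\mu$ was introduced; the same hypotheses that make $\mu$ nondegenerate ensure that no nonzero root becomes identically zero on $\tg$. No other obstacle arises, and the lemma is a direct consequence of the infinitesimal $G$-invariance of $\mu$ together with the eigenspace nature of the root decomposition.
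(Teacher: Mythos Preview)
Your argument is correct, but the paper takes a slightly different route: rather than the infinitesimal invariance $\mu([h,x],y)+\mu(x,[h,y])=0$, it uses the group-level invariance $\mu(\Ad(t)x,\Ad(t)y)=\mu(x,y)$ for $t\in T$ directly. This yields $\mu(x,e_\alpha)=\alpha(t)\mu(x,e_\alpha)$ for $x\in\tg$, and $\mu(e_\beta,e_\alpha)=\alpha(t)\beta(t)\mu(e_\beta,e_\alpha)$; one then chooses $t\in T$ with $\alpha(t)\neq 1$ (resp.\ $(\alpha+\beta)(t)\neq 1$). The advantage of the paper's version is that it sidesteps your ``only subtle point'' entirely: a nonzero element of $X(T)$ is automatically a nontrivial character of $T$, so such a $t$ exists regardless of $p$, and no separate verification that $d\alpha$ (or $d(\alpha+\beta)$) is nonzero on $\tg$ is needed. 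Your infinitesimal approach is perfectly valid under the standing hypotheses on $p$, but it does rely on that extra fact, which the group-level argument avoids.
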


\begin{proof}
Let $x \in \tg$. For $t \in T$, we have
$\mu(x,e_\a) = \mu(\Ad(t)x, \Ad(t)e_\a) = \a(t) \mu(x, e_\a)$.
Since $\a \neq 0$, we can choose $t$ so that $\a(t) \neq 1$, and thus
$\mu(x, e_\a) = 0$.

Now let $\b$ be a root different from $-\a$. We have
$\mu(e_\b, e_\a) = \mu(\Ad(t)e_\b, \Ad(t)e_\a) = \a(t)\b(t)\mu(e_\b,e_\a)$.
Since $\b \neq -\a$, we may choose $t$ so that $\a(t)\b(t) \neq 1$, and thus
$\mu(e_\b, e_\a) = 0$.
\end{proof}

\begin{corollary}
\label{b orthogonal}
The orthogonal of $\bg$ in $\gg$ is $\ug$.
\end{corollary}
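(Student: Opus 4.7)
The plan is to use the lemma to show the inclusion $\ug \subset \bg^\perp$, and then conclude equality by a dimension count using the non-degeneracy of $\mu$.

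First I would observe that, thanks to the decomposition $\bg = \tg \oplus \bigoplus_{\alpha \in \Phi^+} \gg_\alpha$ and $\ug = \bigoplus_{\alpha \in \Phi^+} \gg_\alpha$, the inclusion $\ug \subset \bg^\perp$ is essentially an immediate consequence of the preceding lemma. Indeed, for $u = \sum_{\alpha \in \Phi^+} u_\alpha \in \ug$ and $b = t + \sum_{\beta \in \Phi^+} b_\beta \in \bg$, one expands $\mu(b,u)$ bilinearly. The terms $\mu(t, u_\alpha)$ vanish since $\tg \perp \gg_\alpha$, and the terms $\mu(b_\beta, u_\alpha)$ vanish because $\alpha, \beta > 0$ forces $-\beta \neq \alpha$, so the lemma applies.

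Next, since $\mu$ is assumed to be a non-degenerate bilinear form on $\gg$, one has $\dim \bg^\perp = \dim \gg - \dim \bg = (2\nu + r) - (\nu + r) = \nu = \dim \ug$, using the dimension formulas recalled in \S\ref{subsec:geometric context}. Combining this with $\ug \subset \bg^\perp$ yields $\ug = \bg^\perp$.

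There is no real obstacle here; the statement is a direct corollary of the lemma and the dimensional identity $\dim \gg = 2\dim \ug + \dim \tg$. The only implicit ingredient is that the non-degeneracy of $\mu$ on $\gg$ guarantees $\dim \bg^\perp + \dim \bg = \dim \gg$, which is standard.
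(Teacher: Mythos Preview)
Your proof is correct and follows exactly the same approach as the paper: use the preceding lemma to obtain $\ug \subset \bg^\perp$, then conclude by the dimension count $\dim \bg + \dim \ug = 2\nu + r = \dim \gg$ together with the non-degeneracy of $\mu$. The only difference is that you spell out the term-by-term verification of the inclusion, which the paper leaves implicit.
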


\begin{proof}
By the preceding lemma, $\bg$ is orthogonal to $\ug$, and we have
$\dim \bg + \dim \ug = 2\nu + r = \dim \gg$, hence the result, since
$\mu$ is non-degenerate. 
\end{proof}

Let $\FC$ be the Fourier-Deligne transform associated to $p : \gg \to
S = \Spec k$
(any vector space can be considered as a vector bundle over a point).
Since we identify $\gg$ with $\gg'$, the functor $\FC$
is an auto-equivalence of the triangulated category $D^b_c(\gg,\EM)$.
The application $a$ of Theorem \ref{th:inv} \eqref{eq:inv},
which was defined as the opposite of the canonical
isomorphism from a vector bundle to its bidual, is now multiplication by $-1$.

We will need to consider the base change $f : \BC \to \Spec k$. We will denote by
$\FC_\BC$ the Fourier-Deligne transform associated to $p_\BC : \BC \times \gg \to \BC$.
We have a commutative diagram
\[
\xymatrix{
G \times_B \gg
\ar@{=}[d]
&
G \times_B \bg
\ar@{=}[d]
\ar@<.5ex>@{_{(}->}[l]
&
G \times_B \ug
\ar@{=}[d]
\ar@<.5ex>@{_{(}->}[l]
\\
\BC \times \gg
\ar[d]_{p_\BC}
\ar[dr]_F
&
\tilde\gg
\ar@<.5ex>@{_{(}->}[l]_i
\ar[d]^\pi
\ar@{}[dl] |(.3){\D}
\ar@{}[dr] | {\boxempty_\NC}
&
\NCt
\ar@<.5ex>@{_{(}->}[l]_{i_\NCt}
\ar[d]^{\pi_\NC}
\\
\BC
\ar[dr]_f
&
\gg
\ar[d]^p
&
\NC
\ar@<.5ex>@{_{(}->}[l]_{i_\NC}
\\
&
\Spec k
}
\]

We have (below COM means isomorphisms coming from commutative diagrams)
\[
\begin{array}{rcll}
\FC(\EM\KC)
&=& \FC(\pi_!\ \EM_{\tilde\gg}[2\nu + r])
\\
&=& \FC(F_!\ i_*\ \EM_{\tilde\gg}[\nu + r]) [\nu]
&\text{by COM}_!(\D)
\\
&=& F_!\ \FC_\BC(i_*\ \EM_{\tilde\gg}[\nu + r]) [\nu]
&\text{by BC}_!(f)
\\
&=& F_!\ i_*\ {i_\NCt}_*\ \EM_\NCt (-\nu - r) [\nu] [\nu]
&\text{by SUB}
\\
&=& {i_\NC}_*\ {\pi_\NC}_!\ \EM_\NCt (-\nu - r) [2\nu]
&\text{by COM}_!(\D,\boxempty_\NC)
\\
&=& {i_\NC}_*\ \EM\KC_\NC (- \nu - r)
\end{array}
\]

Applying $\FC$ and using Theorem \ref{th:inv} \eqref{eq:inv}, we get
\[
a_* \EM\KC (-2\nu - r) = \FC({i_\NC}_*\ \EM\KC_\NC) (-\nu - r)
\]
But $a_* \EM\KC \simeq \EM\KC$ since $\EM\KC$ is monodromic
($\CM^*$-equivariant), so that:
\begin{theorem}
We have
\begin{gather*}
\FC(\EM\KC) \simeq {i_\NC}_*\ \EM\KC_\NC (- \nu - r)\\
\FC({i_\NC}_*\ \EM\KC_\NC) \simeq \EM\KC (-\nu)
\end{gather*}
\end{theorem}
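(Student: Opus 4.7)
The plan is to compute $\FC(\EM\KC)$ by factoring Springer's resolution through $\BC$ and reducing to the sub-bundle formula \eqref{eq:sub}. Writing $\pi = F \circ i$, where $i : \tilde\gg \hookrightarrow \BC \times \gg$ identifies $\tilde\gg \simeq G \times^B \bg$ with a closed sub-vector bundle of the trivial bundle over $\BC$, and $F : \BC \times \gg \to \gg$ is the projection, the base change formula \eqref{eq:bc_!} applied to the Fourier-Deligne transform $\FC_\BC$ associated to $p_\BC : \BC \times \gg \to \BC$ lets me interchange $F_!$ with $\FC$. This reduces the computation to $\FC_\BC(i_*\,\EM_{\tilde\gg}[\nu+r])$, which is the Fourier transform over $\BC$ of the constant sheaf on a sub-vector bundle of rank $\nu+r$ inside the trivial bundle of rank $2\nu+r$.

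The key geometric input is that the orthogonal of $\tilde\gg = G \times^B \bg$ inside $\BC \times \gg$, with respect to the fibrewise pairing induced by $\mu$, is $G \times^B \ug = \NCt$. This follows from Corollary \ref{b orthogonal}, together with the $G$-invariance of $\mu$, which propagates the orthogonality between $\bg$ and $\ug$ along every $G$-orbit and therefore globalizes it to an orthogonality of sub-bundles over $\BC$. Applying \eqref{eq:sub} then yields $\FC_\BC(i_*\,\EM_{\tilde\gg}[\nu+r]) \simeq {i_\NCt}_*\,\EM_\NCt(-\nu-r)[\nu]$, and pushing forward by $F_!$ using the commutativity of the cartesian square $\boxempty_\NC$ produces the first isomorphism $\FC(\EM\KC) \simeq {i_\NC}_*\,\EM\KC_\NC(-\nu-r)$, once the shifts $[\nu+r]+[\nu]=[2\nu+r]$ are accounted for.

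For the second isomorphism I would apply $\FC$ once more to the first and invoke Fourier inversion \eqref{eq:inv}, which for $\gg$ viewed as a vector bundle of rank $2\nu+r$ over a point gives $\FC \circ \FC(K) \simeq a_* K(-(2\nu+r))$ with $a$ equal to multiplication by $-1$ on $\gg$. Combined with the first isomorphism, this yields
\[
\FC({i_\NC}_*\,\EM\KC_\NC)(-\nu-r) \simeq a_*\,\EM\KC(-(2\nu+r)).
\]
Now $\EM\KC$ is $\Gm$-monodromic because the scaling $\Gm$-action on $\gg$ lifts to $\tilde\gg$, so $a_*\,\EM\KC \simeq \EM\KC$; cancelling the common Tate twist $(-\nu-r)$ on both sides gives $\FC({i_\NC}_*\,\EM\KC_\NC) \simeq \EM\KC(-\nu)$.

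The main obstacle is the careful bookkeeping of cohomological shifts and Tate twists through the successive applications of \eqref{eq:bc_!}, \eqref{eq:sub}, the commutativity of the squares $\D$ and $\boxempty_\NC$, and \eqref{eq:inv}: the essential ideas (sub-bundle formula, Fourier inversion, $\Gm$-monodromicity) are straightforward once the geometric set-up is fixed, but obtaining the precise twist $(-\nu-r)$ requires attention to the ranks $\nu+r$ and $2\nu+r$ of the bundles involved, and to the notational clash between the rank $r$ of $G$ and the generic rank variable appearing in \eqref{eq:inv}.
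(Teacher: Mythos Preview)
Your proposal is correct and follows essentially the same route as the paper: factor $\pi = F \circ i$, apply \eqref{eq:bc_!} to pass from $\FC$ to $\FC_\BC$, identify the orthogonal of $G\times^B\bg$ as $G\times^B\ug$ via Corollary~\ref{b orthogonal}, invoke \eqref{eq:sub}, push forward through the square $\boxempty_\NC$, and then derive the second isomorphism from \eqref{eq:inv} together with the $\Gm$-monodromicity of $\EM\KC$. The only cosmetic discrepancy is that what you call ${i_\NCt}_*$ (the inclusion of $\NCt$ into $\BC\times\gg$) is written in the paper as the composite $i_*\,{i_\NCt}_*$, but your bookkeeping of shifts and twists matches the paper's computation line by line.
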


Note that this proves a second time that $\KC_\NC$ is perverse.

\begin{corollary}
The functors ${j_\rs}_{!*}$, $\FC(-)(\nu + r)$ and ${i_\NC}_*$ induce isomorphisms
\[
\EM W
= \End(\EM \KC_\rs)
\elem{\sim} \End(\EM \KC)
\elem{\sim} \End({i_\NC}_*\ \EM\KC_\NC)
\stackrel{\sim}{\longleftarrow} \End(\EM\KC_\NC)
\]
\end{corollary}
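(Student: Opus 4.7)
The plan is to build the string of isomorphisms step by step, each time invoking a fully faithful functor and the explicit identification of the target object supplied by the preceding theorem.

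First, the equality $\EM W = \End(\EM\KC_\rs)$ was already recorded above: $\EM\KC_\rs = \pi_{\rs*}\EM_{\tilde\gg_\rs}[2\nu+r]$ is the local system attached to the regular representation of the Galois group $W$ of the finite \'etale cover $\pi_\rs$, so its endomorphism algebra is $\EM W$. Next, since $\EM\KC = {j_\rs}_{!*}\EM\KC_\rs$ (by Proposition~\ref{prop:small}(ii), $\pi$ being separable, proper and small) and the intermediate extension functor ${j_\rs}_{!*}$ is fully faithful on perverse sheaves, the induced map
\[
\End(\EM\KC_\rs) \longto \End({j_\rs}_{!*}\EM\KC_\rs) = \End(\EM\KC)
\]
is an isomorphism. (Over $\OM$, the remark in the text that ${}^{p}{j_\rs}_{!*}$ and ${}^{p_+}{j_\rs}_{!*}$ agree on $\KC_\rs$ ensures no ambiguity here.)

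For the middle arrow, I would apply the preceding theorem, which gives $\FC(\EM\KC) \simeq i_{\NC*}\EM\KC_\NC(-\nu-r)$; twisting by $(\nu+r)$ yields $\FC(\EM\KC)(\nu+r) \simeq i_{\NC*}\EM\KC_\NC$. By Corollary~\ref{cor:equiv db}, $\FC$ is an equivalence of triangulated categories, and the Tate twist $(\nu+r)$ is a self-equivalence, so the composite $\FC(-)(\nu+r)$ induces an isomorphism on endomorphism algebras
\[
\End(\EM\KC) \elem{\sim} \End(\FC(\EM\KC)(\nu+r)) = \End(i_{\NC*}\EM\KC_\NC).
\]
Finally, $i_\NC : \NC \to \gg$ is a closed immersion, so $i_{\NC*}$ is fully faithful; hence
\[
\End(\EM\KC_\NC) \elem{\sim} \End(i_{\NC*}\EM\KC_\NC).
\]
Concatenating the four identifications gives the claimed chain.

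There is no real obstacle here, because all the work has been done in the preceding theorem; the only point one has to be careful about, especially for $\EM=\OM$, is that each intermediate object is indeed perverse (so that ``fully faithful on perverse sheaves'' applies): $\EM\KC_\rs$ and $\EM\KC$ are perverse by Proposition~\ref{prop:small}, $\EM\KC_\NC$ is perverse by the semi-smallness of $\pi_\NC$, and $i_{\NC*}\EM\KC_\NC$ is perverse because $i_\NC$ is a closed immersion.
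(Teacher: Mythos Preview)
Your proof is correct and follows exactly the reasoning the paper intends: the corollary is stated without proof precisely because each arrow is an isomorphism for the reasons you give (full faithfulness of $j_{\rs!*}$, the equivalence $\FC(-)(\nu+r)$ from Corollary~\ref{cor:equiv db}, and full faithfulness of the closed immersion pushforward $i_{\NC*}$), with the identification of the objects supplied by the preceding theorem. There is nothing to add.
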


For $E \in \EM W\text{-mod}$, let
$\TC(E) = \FC {j_\rs}_{!*}(E\ [2\nu + r])(\nu + r)$.
By the theorem above, we have $\TC(\EM W) = \KC_\NC$. More correctly,
we should say that $\TC(\EM W)$ is supported on $\NC$, and write
$\TC(\EM W) = {i_\NC}_* \KC_\NC$, but we identify the perverse sheaves
on $\NC$ with their extension by zero on $\gg$.

\begin{corollary}
\label{cor:dec K}
The perverse sheaf $\KM \KC_\NC$ is semisimple, and we have the decomposition
\[
\KM \KC_\NC =
\bigoplus_{E \in \Irr \KM W} \TC(E)^{\dim E}.
\]
Similarly, we have decompositions into indecomposable summands
\[
\KC_\NC = 
\bigoplus_{F \in \Irr \FM W} \TC(P_F)^{\dim F},
\]
and
\[
\FM \KC_\NC
= \bigoplus_{F \in \Irr \FM W} \TC(\FM P_F)^{\dim F}.
\]
The indecomposable summand $\TC(\FM P_F)$ has top and socle isomorphic
to $\TC(F)$.
\end{corollary}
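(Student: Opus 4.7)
The plan is to transport the three decompositions of $\EM\KC$ (for $\EM = \KM$, $\OM$, $\FM$) already established in this subsection across the Fourier-Deligne equivalence, using the preceding theorem which identifies $\FC(\EM\KC)(\nu+r)$ with ${i_\NC}_*\,\EM\KC_\NC$. Everything should fall out formally from the fact that $\FC(-)(\nu+r)$ is an equivalence of abelian categories (Theorem \ref{th:equiv perv}) combined with the top/socle analysis of $\EM\KC$ already carried out.

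More precisely, I would first recall the three decompositions of $\EM\KC$ obtained from the decompositions of the group algebras $\KM W$, $\OM W$ and $\FM W$: after applying ${j_\rs}_{!*}$, the summands ${j_\rs}_{!*}\un E$ (for $E\in\Irr\KM W$), ${j_\rs}_{!*}\un{P_F}$ and ${j_\rs}_{!*}\un{\FM P_F}$ (for $F\in\Irr\FM W$) appear with multiplicity $\dim E$, $\dim F$, $\dim F$ respectively. Applying $\FC(-)(\nu+r)$ to each decomposition and using the preceding theorem to identify the left-hand side with ${i_\NC}_*\,\EM\KC_\NC$, then identifying perverse sheaves on $\NC$ with their extensions by zero to $\gg$, each summand ${j_\rs}_{!*}\un E$ (resp.\ ${j_\rs}_{!*}\un{P_F}$, ${j_\rs}_{!*}\un{\FM P_F}$) is carried by definition to $\TC(E)$ (resp.\ $\TC(P_F)$, $\TC(\FM P_F)$). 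This yields the three displayed formulas.

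The remaining qualitative assertions are immediate from the fact that $\FC(-)(\nu+r)$ is an equivalence of abelian categories. Semisimplicity of $\KM\KC_\NC$ follows because the equivalence preserves simple objects and direct sums, and $\KM\KC$ is semisimple. Indecomposability of each $\TC(P_F)$ and $\TC(\FM P_F)$ follows because $\FC(-)(\nu+r)$ preserves indecomposability (equivalences preserve endomorphism rings, so in particular preserve the property of having local endomorphism ring). Finally, the claim on tops and socles: it was already shown above that the indecomposable summand ${j_\rs}_{!*}\un{\FM P_F}$ has top and socle isomorphic to ${j_\rs}_{!*}\un F$; since an equivalence of abelian categories preserves the top and socle functors, applying $\FC(-)(\nu+r)$ yields that $\TC(\FM P_F)$ has top and socle isomorphic to $\TC(F)$.

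The only subtle point to check is the identification with the extension by zero under ${i_\NC}_*$, which is harmless since ${i_\NC}_*$ is fully faithful on perverse sheaves; beyond this bookkeeping, no real obstacle arises, as all substantive input has been packaged into the preceding theorem, the equivalence theorem for $\FC$, and Proposition \ref{prop:top socle}.
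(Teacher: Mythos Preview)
Your proposal is correct and is precisely the argument the paper has in mind: the corollary is stated without proof because it follows immediately by applying the equivalence $\FC(-)(\nu+r)$ to the decompositions of $\EM\KC$ already obtained via ${j_\rs}_{!*}$ and Proposition~\ref{prop:top socle}, and invoking the preceding theorem to identify the result with ${i_\NC}_*\,\EM\KC_\NC$. Your handling of semisimplicity, indecomposability, and tops/socles via the equivalence is exactly right.
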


\subsubsection{Springer correspondence by Fourier-Deligne transform.}

We are now ready to define a Springer correspondence by Fourier-Deligne transform. We
treat the ordinary and modular cases simultaneously. The ordinary case
was achieved by Brylinski \cite{BRY}, following 
the case of $\DC$-modules over complex varieties \cite{HK}. See the
survey by Shoji \cite{SHOJI} for a nice exposition.

At the time of \cite{Juteau:thesis}, the modular case was new. All the steps go through as in the ordinary
case, except the fact that $\FM\KC_\NC$ is no longer semisimple in
general. However, to each simple $\FM W$-module, we can still
associate a simple $G$-equivariant perverse sheaf on the nilpotent
cone. Recall that we denote by $\PG_\EM$ the set of pairs $(\OC,\LC)$
where $\OC$ is a nilpotent orbit and $\LC$ is a $G$-equivariant
irreducible local system on $\OC$ with coefficients in $\EM = \KM$ or
$\FM$. We will often abuse notation and identify it with the set of pairs $(x,\rho)$
with $x\in \NC$ and $\rho \in \Irr \EM A_G(x)$, up to $G$-conjugacy.
For $(x,\rho) \in \PG_\EM$, we denote by $\ic(x,\rho)$ the
corresponding simple $G$-equivariant perverse sheaf.

\begin{theorem}
\label{th:springer}
Let $\EM$ be $\KM$ or $\FM$. For $E \in \Irr \EM W$, let
\[
\TC_\EM(E) = \FC j_{\rs !*} (\un E)
\]
Then $\TC_\EM(E)$ is a simple $G$-equivariant perverse sheaf supported by $\NC$.
Hence it is of the form $\ic_\EM(x_E,\rho_E)$ for some
$(x_E,\rho_E)\in \PG_\EM$. The assignment $E \mapsto (x_E,\rho_E)$ defines
an injective map
\[
\Psi_\EM : \Irr \EM W \longto \PG_\EM.
\]
\end{theorem}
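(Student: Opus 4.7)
The plan is to run $\un E$ through the two functors $j_{\rs !*}$ and $\FC$, using that each preserves simplicity and $G$-equivariance, and then to locate the support of the output inside $\NC$. For $E \in \Irr \EM W$, the representation of $\pi_1(\gg_\rs)$ on $E$ (factoring through the Galois quotient $W$) yields a simple $G$-equivariant $\EM$-local system on $\gg_\rs$, whose shift $\un E$ is a simple perverse sheaf. By the general properties of intermediate extension recalled in Section \ref{subsec:IC}, $j_{\rs !*}(\un E)$ is then a simple $G$-equivariant perverse sheaf on $\gg$. Applying $\FC$ and invoking Corollary \ref{cor:fourier simple} together with Proposition \ref{prop:G eq} (taking $M$ to be the constant perverse sheaf on $G$), I obtain that $\TC_\EM(E)$ is a simple $G$-equivariant perverse sheaf on $\gg$.

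Next I would show that $\TC_\EM(E)$ is supported on $\NC$. The computation just before Corollary \ref{cor:dec K} gives $\FC(\EM\KC) \simeq {i_\NC}_* \EM\KC_\NC(-\nu-r)$, so $\FC(\EM\KC)$ is supported on $\NC$. In the ordinary case $\EM = \KM$, the semisimple decomposition $\KM\KC = \bigoplus_E (j_{\rs !*}\un E)^{\dim E}$ exhibits $\TC_\KM(E)$ as a direct summand of $\FC(\KM\KC)$, and we are done. The modular case is the main obstacle, since $\FM W$ is not semisimple and the direct-sum argument fails. The fix is to trace composition factors: each $F \in \Irr \FM W$ is a composition factor of the regular representation $\FM W$, so $\un F$ is a composition factor of $\un{\FM W} = \FM\KC_\rs$; by Corollary \ref{cor:IC mult}, $j_{\rs !*}(\un F)$ is a composition factor of $\FM\KC$; and applying the exact equivalence $\FC$ (Theorem \ref{th:equiv perv}) yields that $\TC_\FM(F)$ is a composition factor of $\FC(\FM\KC)$. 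Since the perverse sheaves supported on the closed subset $\NC$ form a Serre subcategory of $\p\MC(\gg,\FM)$ (as used in the proof of Proposition \ref{prop:dec tri}), the simple subquotient $\TC_\FM(F)$ is itself supported on $\NC$, as required.

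Finally, for the injectivity statement, I would observe that $E \mapsto \un E$ is injective on isomorphism classes of simples by the classification of local systems in terms of representations of $\pi_1(\gg_\rs)$; that $j_{\rs !*}$ is injective on isomorphism classes of simples by the recollement formalism of Section \ref{subsec:IC}; and that $\FC$ is an equivalence of abelian categories by Theorem \ref{th:equiv perv}, hence in particular injective on simples. Composing these three maps, and identifying simple $G$-equivariant perverse sheaves on $\NC$ with pairs $(x,\rho) \in \PG_\EM$, produces the desired injection $\Psi_\EM : \Irr\EM W \injto \PG_\EM$. The only delicate step in the whole argument is the modular support claim, which forces one to abandon the characteristic-zero direct-summand shortcut and argue instead via composition factors and the Serre-subcategory property of $\p i_{\NC *}$.
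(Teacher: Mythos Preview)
Your proof is correct; the differences from the paper are concentrated in the modular support step. The paper argues via Corollary~\ref{cor:dec K}: it decomposes $\FM\KC_\NC$ into indecomposable summands $\TC(\FM P_F)$ coming from the projective covers $P_F$, and then invokes Proposition~\ref{prop:top socle} to identify $\TC_\FM(F)$ as the top (and socle) of such a summand, hence as a subobject of something supported on $\NC$. Your route is more direct: you only need that $\TC_\FM(F)$ occurs as a \emph{composition factor} of $\FC(\FM\KC)$, which follows from Corollary~\ref{cor:IC mult} plus the exactness of $\FC$, and then the Serre-subcategory property of $i_{\NC *}\p\MC(\NC,\FM)$ finishes the job. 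This bypasses the projective-cover decomposition and the top/socle preservation of $j_{!*}$ entirely. The paper's argument has the advantage of yielding the finer structural statement in Corollary~\ref{cor:dec K} (which is used later, e.g.\ implicitly in the discussion of $\KC_\NC$), while yours is the minimal argument needed for the theorem as stated. For injectivity, your chain of injections and the paper's explicit inversion via $j_\rs^*\FC^{-1}$ are two phrasings of the same fact.
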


\begin{proof}
Since $E$ is an irreducible $\EM W$-module, the shifted local system
$\un E$ is a simple perverse sheaf on $\gg_\rs$, so its intermediate
extension $j_{\rs !*} (\un E)$ is a simple perverse sheaf on
$\gg$. Since the Fourier-Deligne transform maps a simple perverse
sheaf to a simple perverse sheaf by Corollary \ref{cor:fourier
  simple}, and preserves $G$-equivariance by Proposition \ref{prop:G eq},
we conclude that $\TC_\EM(E)$ is a simple $G$-equivariant perverse sheaf.

Let us now prove that $\TC_\EM(E)$ is supported by $\NC$. In the case
$\EM = \KM$, one can simply say that $\TC_\KM(E)$ is a direct summand
of $\KM\KC_\NC$ by Corollary \ref{cor:dec K}. In the case $\EM = \FM$,
this is no longer true when $\ell$ divides the order of the Weyl group
$W$. However, we have seen in Corollary \ref{cor:dec K} that
$\TC_\FM(E)$ is isomorphic to the top and to the socle of $\TC_\FM(\FM
P_E)$, which is a direct summand of $\FM\KC_\NC$. Thus $\TC_\FM(E)$ is
supported by $\NC$ in the modular case as well.

It follows that $\TC_\EM(E)$ is of the form $\ic_\EM(x_E,\rho_E)$ for some
uniquely determined $(x_E,\rho_E) =: \Psi_\EM(E)$ in $\PG_\EM$.  The
map $\Psi_\EM$ is injective, because, if $\Psi_\EM(E) = (x_E,\rho_E)$,
then $\TC_\EM(E) = \ic_\EM(x_E,\rho_E)$, and $E$ is the simple
$\EM W$-module corresponding to the local system
$j_\rs^*\FC^{-1}\ic_\EM(x_E,\rho_E)$. We have used the inverse
Fourier transform $\FC^{-1}$ of Theorem \ref{th:inv}. 
\end{proof}

\begin{definition}
The map $\Psi_\EM$ of Theorem \ref{th:springer} is the Springer
correspondence for $\EM W$ (by Fourier-Deligne transform).
\end{definition}

So the Springer correspondence $\Psi_\EM$ induces a bijection from
$\Irr \EM W$ onto its image. In the ordinary case, this image contains
all pairs involving a trivial local system. This is no longer true in
the modular case. However, the following proposition shows that the
trivial pair is always in the image.

\begin{proposition}
Let $\EM$ be $\KM$ or $\FM$. The Springer correspondence for $\EM W$
maps the trivial $\EM W$-module to the trivial nilpotent pair. That
is, if $1_W$ denotes the trivial $\EM W$-module, then we have
\[
\Psi_\EM(1_W) = (0,1).
\]
\end{proposition}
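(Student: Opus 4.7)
The plan is to unwind the definition of $\TC_\EM(1_W)$ and identify it directly using the DIRAC formula of Proposition~\ref{prop:dirac}. First, I would observe that the local system $\un{1_W}$ associated to the trivial $\EM W$-representation is nothing but the constant sheaf $\EM_{\gg_\rs}$, shifted by $\dim \gg = 2\nu+r$, since the covering $\pi_\rs$ decomposes into $|W|$ trivial components over the trivial character. Because $\gg$ is smooth and irreducible and $j_\rs$ is the inclusion of a smooth open dense subvariety, the intermediate extension is simply the constant sheaf:
\[
j_{\rs !*}(\un{1_W}) \simeq \EM_\gg[\dim \gg].
\]

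Next, I would view the vector space $\gg$ as a (trivial) vector bundle over $S=\Spec k$ with structure map $p\colon \gg \to \Spec k$. Then $\EM_\gg[\dim \gg] \simeq p^*\EM_{\Spec k}[\dim \gg]$, so Proposition~\ref{prop:dirac} applied with $L = \EM_{\Spec k}$ and $r = \dim\gg$ yields
\[
\FC\bigl(p^*\EM_{\Spec k}[\dim \gg]\bigr) \simeq \s'_* \EM_{\Spec k}(-\dim \gg),
\]
where $\s'\colon \Spec k \to \gg$ is the zero section, i.e.\ the inclusion of the origin $0 \in \NC$. Putting these two steps together,
\[
\TC_\EM(1_W) = \FC\,j_{\rs !*}(\un{1_W}) \simeq \s'_* \EM(-\dim\gg),
\]
which, ignoring the Tate twist (irrelevant for the perverse isomorphism class), is the simple perverse skyscraper sheaf at $0$. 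This is precisely $\ic_\EM(\{0\},\EM) = \ic_\EM(0,1)$, so $\Psi_\EM(1_W) = (0,1)$.

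The only potential obstacle is bookkeeping the shift and Tate twist conventions so that the statement comes out in the normalisation $\ic_\EM(0,1)$, but this is a routine check using that $\{0\}$ is an orbit of dimension $0$ and that the perverse structure is unaffected by a Tate twist. The argument is uniform in $\EM = \KM$ or $\FM$, since Proposition~\ref{prop:dirac} holds over any of our coefficient rings, and in particular it does not rely on any semisimplicity of $\FM\KC_\NC$.
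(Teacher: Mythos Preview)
Your proposal is correct and follows exactly the same approach as the paper's proof: identify $\un{1_W}$ with the shifted constant sheaf on $\gg_\rs$, use that $j_{\rs!*}$ of this is the shifted constant sheaf on all of $\gg$ (since $\gg$ is smooth), and then apply the DIRAC formula of Proposition~\ref{prop:dirac} to obtain the skyscraper at the origin. The paper records this in one line; you have simply unpacked the same steps in more detail.
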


\begin{proof}
We have $\TC_\EM(1_W) = \FC(j_{\rs!*} \un \EM_{\gg_\rs}) = \FC(\un \EM_\gg) =
\ic_\EM(0,1)$ by Proposition \ref{prop:dirac} \eqref{eq:dirac}.
\end{proof}

Recall that the Springer correspondence by Fourier transform differs
from the Springer correspondence by restriction.  For example, we have
\[
\RC_\KM(1_W) = i_\NC^* j_{\rs!*} \un \KM_{\gg_\rs} [-r]
= i_\NC^* \KM_\gg [2\nu] = \KM_\NC [2\nu] = \ic_\KM(x_\reg,1)
\]
(see the remarks at the end of \ref{subsubsec:res}).

We note that $\TC_\EM$ induces an equivalence of categories from
$\EM W$-mod onto its essential image, since it is fully
faithful. However, the inclusion of this image into the category of
$G$-equivariant $\EM$-perverse sheaves on $\NC$ (as a full subcategory)
does not preserve Ext groups.

Let us remark that if $\ell$ does not divide the order of any of the finite
groups $A_G(x)$, $x\in\NC$, then all the group algebras $\FM A_G(x)$ are
semisimple, so for each $x$ there is a natural bijection
$\Irr \KM A_G(x) \stackrel{\sim}{\to} \Irr \FM A_G(x)$,
and thus there is a natural bijection
$\PG_\KM \stackrel{\sim}{\to} \PG_\FM$.

\section{Decomposition matrices and basic sets}
\label{sec:decomposition}

We have decomposition maps for representations of the Weyl group $W$ and for
perverse sheaves on the nilpotent cone $\NC$. In this section, we are going
to compare them. The main result is that the decomposition matrix of a Weyl
group is a submatrix of the decomposition matrix for $G$-equivariant
perverse sheaves on $\NC$.

Let us recall some results of Subsection \ref{subsec:IC} in the
context of the nilpotent cone. We have the decomposition matrix
$D^\NC$ for $G$-equivariant perverse sheaves on the nilpotent cone:
for $(x,\rho)\in\PG_\KM$ and $(y,\sigma)\in\PG_\FM$, recall that
\[
d^\NC_{(x,\rho),(y,\sigma)}
:= [\FM \ic(\ov\OC_x, \LC_{\rho,\OM}) : \ic(\ov\OC_y, \LC_\sigma)],
\]
where $\OC_x$ (resp. $\OC_y$) is the orbit of $x$ (resp. $y$), and
$\LC_\rho$ (resp. $\LC_\sigma$) is the local system associated to
$\rho$ (resp. $\sigma$). Recall that this number does not depend on
the choice of an integral form $\LC_{\rho,\OM}$.
Those decomposition numbers satisfy the following block triangularity
properties:
\begin{gather}\label{eq:unitri}
d^\NC_{(x,\rho),(x,\sigma)} = d^{A_G(x)}_{\rho,\sigma};\\
d^\NC_{(x,\rho),(y,\sigma)} \neq 0 \Imp \OC_y \leq \OC_x.
\end{gather}

Actually, the decomposition map $d$ is only one side of the
Brauer-Nesbitt $cde$-triangle. We will first see that the $e$
coefficients can be interpreted as multiplicities of simple perverse
sheaves in the perverse sheaves $\TC(\KM P_F)$ (we do not claim that
these perverse sheaves are projective, however).

After that, we will see that the Springer correspondence allows us to
define \emph{Springer basic sets} for Weyl groups. This gives a
geometric proof that the decomposition matrices of Weyl groups are
unitriangular. Geck and Rouquier have defined \emph{canonical basic
  sets} for Hecke algebras, under some assumptions on the
characteristic, using Lusztig's $a$ function \cite{GeckRouquier}. It would be interesting
to compare both approaches.

\subsection{Comparison of $e$ maps}

\begin{theorem}\label{th:e}
Let $F \in \Irr \FM W$. Then $\TC(\KM P_F)$ is supported on $\NC$, and for each
$E \in \Irr \KM W$  we have
\[
[\KM P_F : E] = [\TC(\KM P_F) : \TC(E)].
\]
\end{theorem}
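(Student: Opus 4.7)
The plan is to exploit the fact that over $\KM$ everything is semisimple and $\TC$ is an additive functor (even an equivalence onto its essential image), so the multiplicities can be read off directly from a decomposition of $\KM P_F$ into simples.

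First I would establish that $\TC(\KM P_F)$ is supported on $\NC$. From the decomposition $\OM W = \bigoplus_{F \in \Irr \FM W} P_F^{\dim F}$ recorded just before the definition of $\TC$, extending scalars gives $\KM W \simeq \bigoplus_F (\KM P_F)^{\dim F}$, so $\KM P_F$ is a direct summand of $\KM W$ in $\KM W\modules$. Applying the additive functor $\TC$ and using the identification $\TC(\KM W) = \KM \KC_\NC$ (from the theorem preceding Corollary~\ref{cor:dec K}, under the usual abuse of identifying perverse sheaves on $\NC$ with their extensions by zero to $\gg$), we conclude that $\TC(\KM P_F)$ is a direct summand of $\KM \KC_\NC$. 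In particular it is supported on $\NC$.

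Next, since $\KM W$ is semisimple, the finitely generated $\KM W$-module $\KM P_F$ decomposes as
\[
\KM P_F \;\simeq\; \bigoplus_{E \in \Irr \KM W} E^{\,[\KM P_F : E]}.
\]
Applying $\TC$ and using additivity yields
\[
\TC(\KM P_F) \;\simeq\; \bigoplus_{E \in \Irr \KM W} \TC(E)^{\,[\KM P_F : E]}.
\]
By Theorem~\ref{th:springer}, each $\TC(E)$ is a simple $G$-equivariant perverse sheaf, and the assignment $E \mapsto \TC(E)$ is injective, so the summands $\TC(E)$ for distinct $E \in \Irr \KM W$ are pairwise non-isomorphic simple objects of $\p\MC_G(\NC,\KM)$. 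The right-hand side is therefore the decomposition of $\TC(\KM P_F)$ into simples in the semisimple category $\p\MC_G(\NC,\KM)$, and comparing multiplicities gives
\[
[\TC(\KM P_F) : \TC(E)] \;=\; [\KM P_F : E]
\]
for every $E \in \Irr \KM W$, as required.

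There is no real obstacle here; the only point to be careful about is the interpretation of $\TC$ on $\KM W$-modules. The definition $\TC(E) = \FC\, j_{\rs!*}(\un E)(\nu+r)$ makes sense for any $\KM W$-module $E$ (not just simple ones), and the formation of $\un E$, of $j_{\rs!*}$, and of $\FC$ is functorial and additive in $E$, which is all that the argument requires.
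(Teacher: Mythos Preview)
Your proof is correct. The support argument is identical to the paper's, and for the multiplicity statement you use the semisimplicity of $\KM W$ to decompose $\KM P_F$ into simples and then push this decomposition through the additive functor $\TC$, invoking Theorem~\ref{th:springer} to see that the $\TC(E)$ are pairwise non-isomorphic simples.

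The paper takes a slightly different but closely related route: it appeals to Corollary~\ref{cor:IC mult} to get $[\KM P_F:E]=[{j_\rs}_{!*}\un{\KM P_F}:{j_\rs}_{!*}\un E]$, and then uses that $\FC$ is an equivalence (hence preserves composition multiplicities) to pass to $\TC$. Your argument is a bit more direct in the semisimple setting, relying only on the elementary fact that ${j_\rs}_{!*}$ and $\FC$ preserve finite direct sums; the paper's formulation via Corollary~\ref{cor:IC mult} has the advantage that exactly the same template works in the non-semisimple situation of Theorem~\ref{th:d}, where a naive direct-sum argument is unavailable.
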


\begin{proof}
We have
\[
\bigoplus_{F \in \FM W} \TC(\KM P_F)^{\dim F}
= \TC(\KM W) = \KM\KC_\NC 
\]
hence $\TC(\KM P_F)$ is supported on $\NC$. Moreover,
\[
\begin{array}{rcll}
[\KM P_F : E]
&=& [{j_\rs}_{!*} \KM \un P_F : {j_\rs}_{!*} \un E]
&\text{by Corollary \ref{cor:IC mult}}
\\
&=& [\FC {j_\rs}_{!*} \KM \un P_F (\nu + r) : \FC {j_\rs}_{!*} \un E (\nu + r)]
\\
&=& [\KM \TC(P_F) : \TC(E)]
\end{array}
\]
\end{proof}

\subsection{Comparison of $d$ maps}

If $E \in \Irr \KM W$ and $F \in \Irr \FM W$, let $d^W_{E,F}$ be the
corresponding decomposition number.

\begin{theorem}\label{th:d}
Let $E \in \Irr \KM W$, and let $E_\OM$ be an integral form for $E$.
Then $\TC(E_\OM)$ is supported on $\NC$, and for each $F \in \Irr \FM W$
we have
\[
[\FM E_\OM : F] = [\FM \TC(E_\OM) : \TC(F)]
\]
Thus
\[
d^W_{E,F} = d^\NC_{\Psi_\KM(E),\Psi_\FM(F)}
\]
\end{theorem}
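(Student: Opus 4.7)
The plan is to mirror the three-step argument of Theorem \ref{th:e}, with the crucial difference that we now apply modular reduction to an $\OM$-form, which forces us to check a torsion-freeness/support statement before invoking the multiplicity formalism of Subsection \ref{subsec:IC}.

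\emph{Step 1: $\TC(E_\OM)$ is supported on $\NC$.} Since $E_\OM$ is a torsion-free $\OM W$-module, the corresponding local system $\un{E_\OM}$ on $\gg_\rs$ is torsion-free as a perverse sheaf (after the usual shift by $2\nu+r$). The intermediate extension $j_{\rs !*}$ preserves torsion-freeness, and the Fourier-Deligne transform commutes with $\KM\otimes_\OM(-)$ and with $\FM\otimes_\OM^L(-)$ (both extensions of scalars are base changes in the sense of Proposition \ref{prop:bc}), so it preserves the torsion-free perverse heart. Hence $\TC(E_\OM)$ is a torsion-free $\OM$-perverse sheaf. Now $\KM\TC(E_\OM)=\TC(E)=\ic_\KM(\Psi_\KM(E))$ is supported on $\NC$ by Theorem \ref{th:springer}, so Proposition \ref{prop:int supp F} forces $\TC(E_\OM)$ itself to be supported on $\NC$.

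\emph{Step 2: the multiplicity equality.} Local systems on $\gg_\rs$ factoring through the finite Galois cover $\tilde\gg_\rs\to\gg_\rs$ correspond to $\EM W$-modules, and this correspondence sends composition series to composition series. Applied to the torsion-free perverse sheaf $\un{E_\OM}[2\nu+r]$ and the simple $\un F[2\nu+r]$ on $\gg_\rs$, Corollary \ref{cor:open mult} gives
\[
[\FM E_\OM : F]\;=\;[\FM\,j_{\rs!*}(\un{E_\OM}[2\nu+r]):j_{\rs!*}(\un F[2\nu+r])].
\]
Since the Fourier-Deligne transform is an exact equivalence of abelian categories of $\FM$-perverse sheaves (Theorem \ref{th:equiv perv}) and commutes with modular reduction (so that $\FM\TC(E_\OM)\cong\FC(\FM\,j_{\rs!*}(\un{E_\OM}[2\nu+r]))(\nu+r)$ and $\TC(F)\cong\FC(j_{\rs!*}(\un F[2\nu+r]))(\nu+r)$), it preserves composition multiplicities, so the right-hand side equals $[\FM\TC(E_\OM):\TC(F)]$.

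\emph{Step 3: identification with $d^\NC$.} By Theorem \ref{th:springer}, $\TC(F)=\ic_\FM(\Psi_\FM(F))$, and by Step 1, $\TC(E_\OM)$ is a torsion-free $\OM$-perverse sheaf on $\NC$ with $\KM\TC(E_\OM)\cong\ic_\KM(\Psi_\KM(E))$, i.e.\ an integral form of that $\ic$. A standard Brauer--Nesbitt argument shows that $[\FM A_\OM]\in K_0(\p\MC(\NC,\FM))$ is independent of the choice of torsion-free $\OM$-form $A_\OM$ of a given $\KM$-perverse sheaf, so
\[
[\FM\TC(E_\OM):\TC(F)]\;=\;d^\NC_{\Psi_\KM(E),\Psi_\FM(F)},
\]
which combined with Step~2 yields both asserted equalities.

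The main conceptual point, and the only place where the argument goes beyond Theorem \ref{th:e}, is Step 1: one cannot simply realise $\TC(E_\OM)$ as a summand of $\KC_\NC$ (as was done for $\TC(\KM P_F)$), so the support has to be extracted from the $\KM$-support via the integral-form/support principle of Proposition \ref{prop:int supp F}. The rest is a formal translation along the chain $(\text{representations of }W)\leftrightarrow(\text{local systems on }\gg_\rs)\xrightarrow{j_{\rs!*}}(\text{perverse sheaves on }\gg)\xrightarrow{\FC}(\text{perverse sheaves on }\NC)$, in which each functor is known to preserve the relevant multiplicities.
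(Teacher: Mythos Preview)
Your proof is correct and follows essentially the same route as the paper's: torsion-freeness of $\TC(E_\OM)$ together with Proposition~\ref{prop:int supp F} for the support claim, then Corollary~\ref{cor:open mult} and the fact that $\FC$ is an exact equivalence commuting with $\FM(-)$ for the multiplicity equality. Your Step~2 is in fact slightly more streamlined than the paper's, which first passes through $[\,{j_\rs}_{!*}\FM\un{E_\OM}:{j_\rs}_{!*}\un F\,]$ and then invokes the short exact sequence $0\to T\to\FM\,{j_\rs}_{!*}\un{E_\OM}\to{j_\rs}_{!*}\FM\un{E_\OM}\to 0$ with $T$ supported on $\gg\setminus\gg_\rs$; you bypass this by using the first equality of Corollary~\ref{cor:open mult} directly. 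One small correction: the commutation of $\FC$ with $\KM(-)$ and $\FM(-)$ is not an instance of Proposition~\ref{prop:bc} (that proposition concerns base change of the scheme $S$, not of the coefficient ring); the commutation is immediate from the six-functor definition of $\FC$, and in any case torsion-freeness is preserved simply because $\FC$ is an exact equivalence of $\OM$-perverse sheaves.
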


\begin{proof}
By \cite[(2.57)]{decperv}, we have a short exact sequence
\begin{equation}\label{T}
0 \longto T \longto \FM {j_\rs}_{!*} \un E_\OM
\longto {j_\rs}_{!*} \FM \un E_\OM \longto 0
\end{equation}
with $T$ supported on $\gg - \gg_\rs$.

We have $\KM \TC(E_\OM) = \TC(E)$, so it is supported by $\NC$.
Now $\TC(E_\OM)$ is torsion-free because $E_\OM$ is torsion-free,
${j_\rs}_{!*}$ preserves monomorphisms, and $\FC$ is an equivalence.
Hence by Proposition \ref{prop:int supp F} it is also supported by $\NC$.
Moreover,
\[
\begin{array}{cll}
\multicolumn{2}{l}{[\FM E_\OM : F]}\\
=& [{j_\rs}_{!*} \FM \underline E_\OM : {j_\rs}_{!*} \underline F]
&\text{by Corollary \ref{cor:open mult}}
\\
=& [\FM {j_\rs}_{!*} \underline E_\OM : {j_\rs}_{!*} \underline F]
&\text{by (\ref{T}) and } [T : {j_\rs}_{!*} \underline F] = 0
\\
=& [\FM \FC({j_\rs}_{!*} \underline E_\OM) (\nu + r) : \FC({j_\rs}_{!*} \underline F)(\nu + r)]
&\text{by EQUIV and } \FC\FM = \FM\FC
\\
=& [\FM \TC(E_\OM) : \TC(F)].
\end{array}
\]
\end{proof}

This theorem means that we can obtain the decomposition matrix of the
Weyl group $W$ by extracting certain rows (the image $\impsi_\KM$ of
the ordinary Springer correspondence) and certain columns (the image
$\impsi_\FM$ of the modular Springer correspondence) of the
decomposition matrix for $G$-equivariant perverse sheaves on the
nilpotent cone $\NC$.

\subsection{Springer basic sets}
\label{subsec:basicsets}

Recall that an $\ell$-modular basic set of a finite group is a set of Brauer characters which is a basis of
the Grothendieck group of modular representations. Such a basic set is said to be ordinary if those
Brauer characters are restrictions of ordinary characters to the set of $\ell$-regular elements.
Then this set of ordinary characters is usually called a basic set itself.
Typically, one looks for ordinary basic sets such that the corresponding submatrix of the decomposition matrix
is unitriangular (for a certain order on ordinary characters). Such a ``triangular basic set''
allows one to define an injection of Brauer characters to ordinary characters.

For a nilpotent orbit $\OC$, let us denote by $A_G(\OC)$ the group
$A_G(x_\OC)$ for some $x_\OC \in \OC$. Since we assume that $G$ is
simple adjoint, each group $A_G(\OC)$ is either $\SG_2^k$, $\SG_3$,
$\SG_4$ or $\SG_5$. In the case of a symmetric group, the ordinary irreducible
representations are parametrized by partitions, they are naturally
ordered by the dominance order, and we have a well-known triangular basic set given
by the $\ell$-regular partitions. In the case of $\SG_2^k$, we take
the $k$-fold cartesian product of the ordered set $(2) > (1^2)$. In
the case $\ell = 2$, we take the obvious basic set consisting of the
trivial representation (corresponding to the choice of $(2)$ in each factor).
For each nilpotent orbit $\OC$, let
$\beta_\OC : \Irr \FM A_G(\OC) \injto \Irr \KM A_G(\OC)$ denote the injections
corresponding to the basic sets mentioned above. We denote by
$\beta_\NC : \PG_\FM \injto \PG_\KM$ the injection obtained by taking
the disjoint union of all $\beta_\OC$. Recall that, if $\ell$ does not
divide the orders of the $A_G(\OC)$, then this is actually a bijection.

In the following diagram, we would like to fill in the dashed line to
obtain a \emph{Springer basic set} given by the injection $\beta_S$.

\begin{theorem}

There exists an injection $\beta_S : \Irr \FM W \injto \Irr \KM W$
making the following diagram commutative:
\[
\xymatrix{
\Irr \FM W \ar@<-0.5ex>@{^{(}->}[rr]^{\Psi_\FM}
\ar@<-0.5ex>@{^{(}-->}[d]^{\beta_S} &&
\PG_\FM \ar@<-0.5ex>@{^{(}->}[d]^{\beta_\NC}
\\
\Irr \KM W \ar@<-0.5ex>@{^{(}->}[rr]^{\Psi_\KM} &&
\PG_\KM
}
\]
It has the following properties (unitriangularity):
\begin{gather}
\forall F \in \Irr \FM W,\ d^W_{\beta_S(F),F} = 1\\
\forall E \in \Irr \KM W,\ \forall F\in \Irr \FM W,\ 
d^W_{E,F} \neq 0 \Imp E \leq F
\end{gather}
\end{theorem}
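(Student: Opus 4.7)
My plan is to define $\beta_S := \Psi_\KM^{-1}\circ\beta_\NC\circ\Psi_\FM$, so that commutativity of the diagram is built in and injectivity follows from the injectivity of all three factors. The substantive task is then to verify the well-definedness of this composition, that is, to show that for every $F\in\Irr\FM W$ the pair $\beta_\NC(\Psi_\FM(F))$ lies in the image $\impsi_\KM:=\Psi_\KM(\Irr\KM W)$.

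Granting well-definedness, set $\Psi_\FM(F)=(x_F,\sigma_F)$ and $\rho_F:=\beta_{\OC_{x_F}}(\sigma_F)$, so that $\beta_S(F)$ corresponds under $\Psi_\KM$ to $(x_F,\rho_F)$. The first unitriangularity statement then falls out of Theorem \ref{th:d} and \eqref{eq:unitri}:
\[
d^W_{\beta_S(F),F}=d^\NC_{(x_F,\rho_F),(x_F,\sigma_F)}=d^{A_G(x_F)}_{\rho_F,\sigma_F}=1,
\]
the last equality being the defining property of the basic set $\beta_{\OC_{x_F}}$. For the second, if $d^W_{E,F}\neq 0$ then Theorem \ref{th:d} gives $d^\NC_{\Psi_\KM(E),\Psi_\FM(F)}\neq 0$, and \eqref{eq:unitri} forces $\OC_{x_E}\geq\OC_{x_F}$; in the equal-orbit case the statement reduces to the triangularity of $D^{A_G(x_F)}$ with respect to $\beta_{\OC_{x_F}}$. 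The relevant order on $\Irr\EM W$ is the pullback under $\Psi_\EM$ of the order on $\PG_\EM$ which refines the orbit order by the chosen basic-set orders on each $\Irr A_G(\OC)$.

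The main obstacle, as expected, is the inclusion $\beta_\NC(\Psi_\FM(\Irr\FM W))\subseteq\impsi_\KM$. A priori, the modular Springer correspondence could send some $F$ to a pair whose $\beta_\NC$-image is not attained by $\Psi_\KM$, and ruling this out is the heart of the matter. The approach is to observe that the submatrix of $D^\NC$ indexed by rows $\beta_\NC(\PG_\FM)$ and all columns $\PG_\FM$ is square of size $|\PG_\FM|$ and, by \eqref{eq:unitri} combined with the basic-set triangularity of each $D^{A_G(\OC)}$, is unitriangular (hence invertible) in the order above. One then argues by downward induction on the orbit stratum of $\OC_{x_F}$: at the inductive step, the Cartan--decomposition relation $[\FM P_F:F']=\sum_E[\KM P_F:E]\,d^W_{E,F'}$, combined with Theorem \ref{th:e} (which identifies $[\KM P_F:E]$ with $[\TC(\KM P_F):\TC(E)]$ and exhibits $\TC(\KM P_F)$ as a direct summand of $\KM\KC_\NC$) and Theorem \ref{th:d}, together with the inductive hypothesis excluding strictly larger orbits and the basic-set unitriangularity of $D^{A_G(x_F)}$, pins down a specific $E_F\in\Irr\KM W$ with $\Psi_\KM(E_F)=(x_F,\rho_F)$, which is the required $\beta_S(F)$.
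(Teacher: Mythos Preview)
Your overall architecture matches the paper's: define $\beta_S:=\Psi_\KM^{-1}\circ\beta_\NC\circ\Psi_\FM$, then verify the two unitriangularity statements via Theorem~\ref{th:d} and the block-triangularity \eqref{eq:unitri}. Those verifications are essentially identical to the paper's.

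The gap is in the well-definedness step, which you correctly identify as the crux but do not actually settle. Your proposed downward induction using the Cartan relation $[\FM P_F:F']=\sum_E d^W_{E,F}\,d^W_{E,F'}$ together with Theorems~\ref{th:e} and~\ref{th:d} never produces the needed conclusion. Concretely: from Theorem~\ref{th:e} you learn that every simple summand of $\TC(\KM P_F)$ lies in $\im\Psi_\KM$, and from the triangularity of $D^\NC$ you learn that each such summand $\TC(E)$ satisfies $\OC_{\Psi_\KM(E)}\geq\OC_{x_F}$. But nothing in your argument rules out the possibility that \emph{all} of these summands live on strictly larger orbits, or that those landing on $\OC_{x_F}$ carry a local system different from $\rho_F=\beta_{x_F}(\sigma_F)$. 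The Cartan identity you invoke collapses, via Theorem~\ref{th:d}, to the tautology $\sum_E (d^W_{E,F})^2=[\FM P_F:F]$ and gives no new leverage. The inductive hypothesis on larger orbits tells you nothing about which pairs on $\OC_{x_F}$ are hit by $\Psi_\KM$.

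What the paper does instead is prove a separate lemma (Proposition~\ref{prop:imspri}): if $(x,\rho)\notin\im\Psi_\KM$ then $d^\NC_{(x,\rho),(y,\sigma)}=0$ for every $(y,\sigma)\in\im\Psi_\FM$. The proof is geometric, not combinatorial: the hypothesis means $j_\rs^*\FC^{-1}\ic_\KM(x,\rho)=0$, so $\FC^{-1}\ic_\KM(x,\rho)$ is supported on $\gg\setminus\gg_\rs$; by Proposition~\ref{prop:int supp F} so is any integral form, hence also its modular reduction. Expanding $[\FM\,\FC^{-1}\ic_\OM(x,\rho_\OM)]$ in terms of simples and restricting to $\gg_\rs$ then forces the relevant decomposition numbers to vanish. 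With this in hand, well-definedness is immediate: $d^\NC_{(x_F,\rho_F),(x_F,\sigma_F)}=1\neq 0$ and $(x_F,\sigma_F)\in\im\Psi_\FM$ force $(x_F,\rho_F)\in\im\Psi_\KM$. This Fourier-transform support argument is the missing idea in your proposal.
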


The first part can be reformulated as: each pair in the image of the
modular Springer correspondence goes under $\beta_\NC$ to a pair which
is in the image of the ordinary Springer correspondence. The second
part says that we can arrange the rows and columns of the
decomposition matrix of $W$ so that it is unitriangular, using the
order on ordinary characters induced by the order on nilpotent pairs
described above, and the map $\beta_S$. In other words, we have
constructed a triangular basic set using the Springer correspondence. We call it
the \emph{Springer basic set}.
To achieve this aim, we first need the following result.

\begin{proposition}
\label{prop:imspri}
For $(x,\rho) \in \PG_\KM$ and $(y,\sigma)\in\PG_\FM$,
if $(x,\rho)\notin\im\Psi_\KM$ and
$d^\NC_{(x,\rho),(y,\sigma)} \neq 0$
then $(y,\sigma)\notin\im\Psi_\FM$.
\end{proposition}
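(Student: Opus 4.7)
The plan is to argue by contradiction, using the Fourier-Deligne transform to move the question from $\NC$ to $\gg$. Assume $(y,\sigma) = \Psi_\FM(F)$ for some $F \in \Irr \FM W$, so that $\ic_\FM(y,\sigma) = \TC_\FM(F) = \FC\, j_{\rs!*}(\un F)$; I will show that $(x,\rho) \in \im\Psi_\KM$, contradicting the hypothesis.

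First I would set $B := \ic(\ov\OC_x, \LC_{\rho,\OM})$, the torsion-free $\OM$-integral form defining $d^\NC_{(x,\rho),(y,\sigma)}$, and $C := \FC^{-1}(B)$ using the quasi-inverse of Corollary~\ref{cor:equiv db}. Because $\FC$ is an exact equivalence of abelian categories (Theorem~\ref{th:equiv perv}), $C$ is a torsion-free $\OM$-perverse sheaf on $\gg$ with $\KM C = \FC^{-1}\ic_\KM(x,\rho)$ simple, and composition multiplicities transport:
\[
[\FM C : j_{\rs!*}(\un F)] = [\FM B : \ic_\FM(y,\sigma)] = d^\NC_{(x,\rho),(y,\sigma)} \neq 0.
\]

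The crucial step is then to control the support of $\FM C$. Write $\ov Y := \supp(\KM C)$, the closed support of the simple perverse sheaf $\KM C$. By Proposition~\ref{prop:int supp F}, applied to the torsion-free integral form $C$ of $\KM C$ on the open-closed pair $(\gg\setminus\ov Y, \ov Y)$, the integral form $C$ is also supported on $\ov Y$; consequently $\supp(\FM C) \subset \ov Y$. On the other hand, $j_{\rs!*}(\un F)$ has support equal to all of $\gg$, since $\un F$ is a non-zero local system on the dense open subvariety $\gg_\rs$. As perverse sheaves supported on a closed subset form a Serre subcategory, having $j_{\rs!*}(\un F)$ as a composition factor of $\FM C$ forces $\gg \subset \ov Y$, i.e.\ $\ov Y = \gg$.

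To conclude, I would invoke that any simple $G$-equivariant $\KM$-perverse sheaf on $\gg$ with full support must be of the form $j_{\rs!*}(\un E)$ for a unique $E \in \Irr\KM W$, via the equivalence between $G$-equivariant irreducible local systems on $\gg_\rs$ and irreducible representations of the Galois group $W$ of $\pi_\rs : \tilde\gg_\rs \to \gg_\rs$. Thus $\KM C = j_{\rs!*}(\un E)$, and applying $\FC$ yields $\ic_\KM(x,\rho) = \TC_\KM(E)$, so $(x,\rho) = \Psi_\KM(E) \in \im\Psi_\KM$, the desired contradiction. The key non-trivial input is Proposition~\ref{prop:int supp F}, which provides the support bound $\supp(\FM C) \subset \supp(\KM C)$; the rest is bookkeeping with the Fourier-Deligne construction of $\Psi_\EM$ and the classification of simple $G$-equivariant perverse sheaves on $\gg$ with full support.
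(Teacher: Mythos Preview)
Your argument mirrors the paper's: both pivot on the assertion that $(x,\rho) \notin \im\Psi_\KM$ forces $\FC^{-1}\ic_\KM(x,\rho)$ to be supported away from $\gg_\rs$, and both invoke Proposition~\ref{prop:int supp F} to transfer this support constraint to the integral form and then to its modular reduction. You phrase it as a contrapositive (if the modular reduction has a constituent with full support, then $\KM C$ has full support, hence $(x,\rho)\in\im\Psi_\KM$), whereas the paper argues directly (restrict to $\gg_\rs$ and read off the vanishing of the relevant coefficients in the Grothendieck group). Logically these are the same proof.

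There is, however, a problem with your justification of the final step. You assert that every $G$-equivariant irreducible local system on $\gg_\rs$ arises from some $E\in\Irr\KM W$ via the Galois cover $\pi_\rs$, but this is false: since $\gg_\rs \cong G\times^{N_G(T)}\tg_\rs$, the category of $G$-equivariant local systems on $\gg_\rs$ identifies with that of local systems on $\tg_\rs/W$, and $\pi_1(\tg_\rs/W)$ surjects onto $W$ but is strictly larger. What actually pins $\LC$ down to a $W$-representation is the extra constraint that $\FC(j_{\rs!*}\LC)$ is supported on the nilpotent cone; this is precisely the nontrivial direction of the equivalence $(x,\rho)\in\im\Psi_\KM \Leftrightarrow j_\rs^*\FC^{-1}\ic_\KM(x,\rho)\neq 0$. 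The paper asserts this implication as its opening sentence without proof, treating it as known from the characteristic-zero theory (cf.\ \cite{BRY,LusGreen}), so your overall strategy is on equal footing with the paper's --- but the specific reason you cite does not do the job.
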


\begin{proof}
Since $(x,\rho)\notin\im\Psi_\KM$, we have $j_\rs^*\FC^{-1}\ic_\KM(x,\rho) = 0$.
In other words, we are in the situation of Proposition \ref{prop:int supp F}
with $U = \gg_\rs$ and $Z = \gg \setminus \gg_\rs$, and
$\FC^{-1}\ic_\KM(x,\rho)$ is supported by $Z$. We deduce that its integral form
$\FC^{-1}\ic_\OM(x,\rho_\OM)$ (for any choice of $\rho_\OM$)
is also supported by $Z$, and also the modular reduction
$\FM\FC^{-1}\ic_\OM(x,\rho_\OM)$. Thus we have the following equality
in the Grothendieck group of $\FM$-local systems on $\gg_\rs$:
\[
\begin{array}{rcl}
[j_\rs^* \FM \FC^{-1} \ic_\OM(x,\rho_\OM)]
&=& [j_\rs^* \FC^{-1} \FM \ic_\OM(x,\rho_\OM)]\\
&=& \displaystyle\sum_{(y,\sigma)\in\PG_\FM} d^\NC_{(x,\rho),(y,\sigma)} [j_\rs^*\FC^{-1}\ic_\FM(y,\sigma)]
= 0.
\end{array}
\]
The $[j_\rs^*\FC^{-1}\ic_\FM(y,\sigma)]$ which are non-zero are the
classes of distinct simple objects, hence are linearly independent.
Now if $(y,\sigma) = \Psi_\FM(F)$, then 
$j_\rs^*\FC^{-1}\ic_\FM(y,\sigma) = \un F[2\nu+r] \neq 0$.
We can conclude that
$d^\NC_{(x,\rho),(y,\sigma)} = 0$ for all $(y,\sigma)\in\im\Psi_\FM$.
\end{proof}

\begin{proof}[Proof of theorem]
Let $F \in \Irr \FM W$. We want to prove that $\beta_\NC(\Psi_\FM(F))$
is in $\im \Psi_\KM$.  Let $(x,\sigma) := \Psi_\FM(F)$, and $\rho :=
\beta_x(\sigma)$ so that $(x,\rho) = \beta_\NC(x,\sigma)$. Then
$d^\NC_{(x,\rho),(x,\sigma)} = 1 \neq 0$, hence, by Proposition \ref{prop:imspri},
$(x,\rho) = \Psi_\KM(E)$ for some $E\in\Irr\KM W$ (which is uniquely
determined since $\Psi_\KM$ is injective). We define $\beta_S(F) := E$. 
The diagram commutes by construction.
Moreover, for $F \in \Irr \FM W$ with $\Psi_\FM(F) = (y,\sigma)$, we have
\[
d^W_{\beta_S(F),F} = d^\NC_{\Psi_\KM(\beta_S(F)),\Psi_\FM(F)}
= d^\NC_{\beta_\NC(\Psi_\FM(F)),\Psi_\FM(F)}
= d^\NC_{(y,\beta_y(\sigma)),(y,\sigma)}
= d^{A_G(y)}_{\beta_y(\sigma),\sigma}
= 1,
\]
and if in addition $E\in\Irr\EM W$ is such that
$d^W_{E,F} \neq 0$, then let $(x,\rho) = \Psi_\KM(E)$
\[
d^\NC_{(x,\rho),(y,\sigma)}
= d^\NC_{\Psi_\KM(E),\Psi_\FM(F)}
= d^W_{E,F}
\neq 0
\]
hence $x \leq y$, and in case of equality we have
$d^{A_G(x)}_{\rho,\sigma}\neq 0$, hence $\beta_x(\sigma)\leq \rho$.
By definition, this means $E\leq\beta_S(F)$.
\end{proof}

\section{Some decomposition numbers}
\label{sec:dec}

We refer to \cite{decperv} for some methods to compute decomposition numbers for
perverse sheaves. The calculations for the extreme cases (regular and
subregular orbits, or minimal and trivial orbits) were done there.
We assume that $G$ is almost simple, so that there is a unique
subregular nilpotent orbit and a unique minimal (non-zero) nilpotent orbit.

\subsection{Subregular nilpotent orbit}

The singularity of the nilpotent cone along the subregular orbit is a
simple singularity \cite{BRI}, meaning that the intersection of the nilpotent
cone with a transverse slice to the subregular orbit is isomorphic to 
the quotient of $\AM^2$ by a finite subgroup of $SL_2$. Those
singularities have an $ADE$ classification. We will denote by $x_\reg$
(resp. $x_\subreg$) a regular (resp. subregular) nilpotent element.

Let $\Gamma$ be the type of $\gg$.
If $\Gamma$ is simply laced, then the nilpotent cone has a simple
singularity of the same type $\Gamma$ along the subregular orbit.
The general case is explained in \cite{SLO2}.
One can associate to $\Gamma$ a simply-laced diagram $\wh
\Gamma$ and a subgroup $A$ of its automorphism group, so that $\Gamma$
can be seen as a kind of folding of $\wh\Gamma$. We assume that
$G$ is adjoint. Then this group $A$ can be identified with
$A_G(x_\subreg)$. Let $\wh\Phi$ be a root system of type
$\wh\Gamma$, with weight lattice $P(\wh\Phi)$ and root lattice
$Q(\wh\Phi)$. The group $A$ acts naturally on
$P(\wh\Phi)/Q(\wh\Phi)$. Then the singularity of the nilpotent cone along the
subregular orbit, along with the symmetries induced by
$A_G(x_\subreg)$, is a simple singularity of type $\wh\Gamma$, with
the action of $A$ by automorphisms.

Using this geometric description, the decomposition numbers
involving the regular and subregular orbits have been computed in
\cite[Section 4]{decperv}. The result is the following:

\begin{theorem}
We assume that $G$ is adjoint. Then the decomposition numbers for the
regular and subregular orbits are given by
\[
d^\NC_{(x_\reg,1),(x_\subreg,\rho)}
= \left[\FM \otimes_\ZM \left(P(\wh\Phi)/Q(\wh\Phi)\right) : \rho\right]
\]
where $\rho$ runs over the irreducible representations of $\FM A_G(x_\subreg)$.
\end{theorem}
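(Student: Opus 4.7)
The plan is to localize the computation at a subregular point via a transverse slice, and then carry out an explicit calculation on the resulting Kleinian singularity. Since $\ov{\OC_\reg} = \NC$, the integral IC sheaf on the left of the decomposition number is $\ic(\NC,\OM)$. By the transverse-slice construction of Slodowy and $G$-equivariance, computing $d^\NC_{(x_\reg,1),(x_\subreg,\rho)}$ reduces to an $A$-equivariant computation of composition multiplicities in $\FM\ic(S,\OM)$, where $S$ is a transverse slice to $\OC_\subreg$ at $x_\subreg$ inside $\NC$ and $A = A_G(x_\subreg)$. By the Brieskorn--Slodowy theorem recalled above, $S$ is a Kleinian surface singularity of type $\wh\Gamma$ on which $A$ acts by the diagram automorphisms coming from the folding $\wh\Gamma \to \Gamma$, and $\ic(\ov{\OC_\subreg},\LC_\rho)$ restricts to a skyscraper at $0 \in S$ corresponding to the $A$-module $\rho$.

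Next I would compute $\ic(S,\OM)$ and its modular reduction via the minimal resolution $\wt\pi : \wt S \to S$. This morphism is semi-small with smooth source, so by Proposition~\ref{prop:small} the complex $\wt\pi_*\OM_{\wt S}[2]$ is perverse and contains $\ic(S,\OM)$ as a subquotient, the complement being supported at $0$. The exceptional fibre $E = \wt\pi^{-1}(0)$ is a tree of projective lines whose dual graph is $\wh\Gamma$, so $H^2(E,\ZM)$ is canonically the root lattice $Q(\wh\Phi)$ on the basis of components, while the Poincar\'e-dual stalk $H^2_E(\wt S,\ZM)$ is the weight lattice $P(\wh\Phi)$; the natural map between them is the Cartan matrix, with finite cokernel $P(\wh\Phi)/Q(\wh\Phi)$. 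Combining this with the long exact sequence relating $\ic(S,\OM)$, $\wt\pi_*\OM_{\wt S}[2]$, and the skyscraper complement, one identifies the torsion contribution in the stalk of $\ic(S,\OM)$ at $0$ with $P(\wh\Phi)/Q(\wh\Phi)$. Modular reduction then produces an $\FM A$-module isomorphic to $\FM\otimes_\ZM(P(\wh\Phi)/Q(\wh\Phi))$ in the relevant cohomological degree of $\FM\ic(S,\OM)$, and the desired decomposition number is the multiplicity of $\rho$ in it.

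The principal obstacle is tracking $A$-equivariance through every identification. One must check that the $A$-action on $S$ induced by the centralizer of $x_\subreg$ agrees under Brieskorn--Slodowy with the natural action of the diagram-symmetry group of $\wh\Gamma$, that this action induces the expected permutation of the components of $E$ (hence the standard action on $Q(\wh\Phi)$), and that the intersection pairing between $H^2(E)$ and its dual is $A$-equivariant, so that the cokernel $P(\wh\Phi)/Q(\wh\Phi)$ inherits the natural $A$-module structure coming from the root-system symmetries. Once all this equivariant bookkeeping is in place --- which is the content of \cite[Section 4]{decperv} --- the formula for $d^\NC_{(x_\reg,1),(x_\subreg,\rho)}$ as $[\FM\otimes_\ZM (P(\wh\Phi)/Q(\wh\Phi)) : \rho]$ follows immediately.
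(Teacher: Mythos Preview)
Your proposal is correct and aligns with the paper's approach: the paper does not reprove this result but simply invokes \cite[Section 4]{decperv}, and your outline is an accurate sketch of the argument carried out there (reduction to a transverse slice, identification with a Kleinian singularity of type $\wh\Gamma$ with its $A$-action, computation via the minimal resolution, and identification of the relevant torsion with $P(\wh\Phi)/Q(\wh\Phi)$ as an $A$-module).
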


\subsection{Minimal nilpotent orbit}

We denote the minimal nilpotent orbit of $\gg$ by $\OC_\mini$.
Its closure is $\ov\OC_\mini = \OC_\mini \cup \OC_\triv$, where
$\OC_\triv$ is the trivial orbit $\{0\}$. It is of dimension
$2h^\vee - 2$, where $h^\vee$ is the dual Coxeter number.

Remember that we denote by $\Phi$ the root system of $\gg$ with
respect to some Cartan subalgebra $\tg$. Now we choose some basis of
$\Phi$, and we denote by $\Phi'$ the root subsystem generated by the
long simple roots. Let $W'$ be the Weyl group of $\Phi'$.
We have an epimorphism $W \to W'$. Let $\nat_{W'}$ denote the
reflection representation of $W'$. We denote by $\chi_\mini$ its lift
to $W$. Then we have $\Psi_\KM(\chi_\mini) = (x_\mini,1)$.
Note that, if $\Phi$ is of simply-laced type, then $\Phi' = \Phi$,
$W' = W$, $\chi_\mini = \nat_W$.
On the other hand, we know that $\Psi_\FM(1_W) = (0,1)$, and of course the
trivial representation $1_W$ is the lift to $W$ of the trivial
representation $1_{W'}$ of $W'$.
By Theorem \ref{th:d} and \cite[Section 5]{decperv},
we have the following result:

\begin{theorem}
The decomposition numbers for the minimal and trivial orbits are given by:
\[
d^W_{\chi_\mini,1_W} =
d^{W'}_{\nat_{W'},1_{W'}} =
d^\NC_{(x_\mini,1),(0,1)} =
\dim_\FM \FM \otimes_\ZM \left( P^\vee(\Phi')/Q^\vee(\Phi') \right).
\]
\end{theorem}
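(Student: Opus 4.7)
The plan splits the statement into three equalities, each with its own character. The first, $d^W_{\chi_\mini,1_W} = d^\NC_{(x_\mini,1),(0,1)}$, is an immediate application of Theorem~\ref{th:d}, given the two identifications recalled just above the statement: $\Psi_\KM(\chi_\mini) = (x_\mini,1)$ is a classical computation in the ordinary Springer correspondence, and $\Psi_\FM(1_W) = (0,1)$ was established at the end of Section~\ref{sec:springer}.

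The second equality $d^W_{\chi_\mini,1_W} = d^{W'}_{\nat_{W'},1_{W'}}$ is a purely algebraic statement about inflation along the surjection $W \twoheadrightarrow W'$. The plan is to choose a $W'$-stable $\OM$-lattice $L$ in $\nat_{W'}$, observe that inflation turns $L$ into a $W$-stable $\OM$-lattice in $\chi_\mini$, and note that the $\FM W$-module $\FM \otimes_\OM L$ is the inflation of the $\FM W'$-module $\FM \otimes_\OM L$. Since inflation along a surjective group homomorphism is a fully faithful exact functor sending simples to simples, it preserves composition multiplicities; as $1_{W'}$ inflates to $1_W$, the two decomposition numbers agree.

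The third and main equality $d^\NC_{(x_\mini,1),(0,1)} = \dim_\FM \FM \otimes_\ZM (P^\vee(\Phi')/Q^\vee(\Phi'))$ is the geometric content of the theorem; I would quote the computation of \cite[Section 5]{decperv}, which in turn uses the cohomological calculations of \cite{cohmin}. By definition this decomposition number is the multiplicity of $\ic(\{0\},\FM)$ in $\FM\,\ic(\ov\OC_\mini,\OM)$. Since $\ov\OC_\mini = \OC_\mini \sqcup \{0\}$ contains only two orbits, the multiplicity is determined entirely by the stalk at $0$ of the integral IC sheaf; a standard argument with the perverse $t$-structure (spelled out in \cite{decperv}) translates this into a question about $\ell$-torsion in the integral cohomology $H^\bullet(\OC_\mini,\ZM)$.

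The main obstacle, which is already surmounted in the references, is the explicit identification of the relevant torsion with $\FM \otimes_\ZM \bigl(P^\vee(\Phi')/Q^\vee(\Phi')\bigr)$. This rests on the description of $\ov\OC_\mini$ as the affine cone over a projective homogeneous $G$-variety (the closed $G$-orbit in $\mathbb{P}(\gg)$), combined with Borel's description of the rational cohomology of a partial flag variety, a Gysin analysis of the $\mathbb{G}_m$-bundle $\OC_\mini \to \mathbb{P}(\OC_\mini)$, and a careful handling of the role played by the long roots (which is what introduces $\Phi'$ in the non simply-laced case). With that cohomological input in hand, the three equalities combine to give the statement.
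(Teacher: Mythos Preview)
Your proposal is correct and follows essentially the same approach as the paper, which simply invokes Theorem~\ref{th:d} together with \cite[Section~5]{decperv}. You have spelled out more detail than the paper does---in particular the inflation argument for $d^W_{\chi_\mini,1_W} = d^{W'}_{\nat_{W'},1_{W'}}$ and a sketch of how the geometric computation in \cite{cohmin,decperv} proceeds---but the underlying strategy is identical.
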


\subsection{Special nilpotent orbits}
\label{subsec:special}

In \cite{LusSpec}, Lusztig introduced the special representations of a
finite Weyl group. The \emph{special nilpotent orbits} of $\gg$ 
are the nilpotent orbits $\OC$ such that the representation of
$\KM W$ corresponding (via restriction) to the pair $(\OC,\KM)$ is
\emph{special}. (The original definitions were in terms of unipotent
classes but of course one can work with nilpotent orbits instead.)

On the other hand, Spaltenstein introduced an order-reversing map
$d$ from the set of nilpotent orbits to itself, such that $d^3 = d$
(it is an involution on its image) in \cite{SpalDual}. The image of
$d$ consists exactly of the special nilpotent orbits, and the locally
closed subvarieties
\[
\widehat \OC = \ov \OC \setminus
\mathop{\bigcup_{\OC' \text{ special}}}\limits_{\ov \OC'\subset\ov \OC} \ov \OC'
\]
where $\OC$ runs through the special orbits, form a partition of the
nilpotent cone (any nilpotent orbit is contained in a
$\widehat \OC$ for a unique special orbit $\OC$).

In type $A$, all the nilpotent orbits are special, so the special
pieces are just the nilpotent orbits. Based on the particular case
of the minimal special orbits, Lusztig conjectured in \cite{LusGreen}
that special pieces are rationally smooth. This was confirmed in \cite{BeSpa} for exceptional types,
and in \cite{KP3} for classical types. More precisely, Kraft and Procesi proved the following
(they actually work with a complex group, but the result is certainly true for when the characteristic
$p$ of the base field is odd): 

\begin{theorem}
Let $\OC$ be a special nilpotent orbit of a classical
group. Define the special piece $\widehat \OC$ as above. Then $\widehat \OC$ consists of
$2^d$ conjugacy classes, where $d$ is the number of irreducible
components of $\widehat \OC \setminus \OC$. There is a smooth variety $Y$
with an action of the group $\SG_2^d$, and an isomorphism
\begin{equation}
Y / \SG_2^d \elem{\sim} \widehat \OC
\end{equation}
which identifies the stratification of $\widehat \OC$ with the
stratification of the quotient by isotropy groups. (These are the
$2^d$ subproducts of $\SG_2^d$). In particular $\widehat \OC$ is
rationally smooth.
\end{theorem}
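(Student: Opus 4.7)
The plan is to follow Kraft and Procesi's classical-type analysis, which is essentially a partition-combinatorics argument combined with an explicit construction of the smooth cover $Y$. First I would use the combinatorial parametrization of nilpotent orbits in classical Lie algebras by partitions (with the usual parity constraints for types $B$, $C$, $D$), and the Lusztig-Spaltenstein description of special orbits (those fixed by the symbol-theoretic involution $d$). With this in hand, the orbits sitting inside $\widehat\OC$ can be listed explicitly: they correspond to certain refinements of the partition of $\OC$ obtained by moving pairs of boxes in prescribed ways, and a direct count shows that the resulting set has cardinality $2^d$, where $d$ matches the number of irreducible components of $\widehat\OC\setminus\OC$.

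Next I would set up an induction on the special piece, reducing to ``elementary'' or ``irreducible'' special pieces, namely those where the partition change between $\OC$ and the special orbits immediately below is as local as possible. In a minimal case, the geometry is controlled by a small transverse slice, and one can write down the slice explicitly in matrix coordinates. I would then construct $Y$ as an open subset of an affine space (or a product of a smooth piece with an affine cone over a smooth projective variety) carrying a natural linear action of $\SG_2$ on each of the $d$ coordinates corresponding to the minimal degenerations, so that the diagonal action of $\SG_2^d$ has a geometric quotient that can be identified, via a carefully chosen equivariant map, with $\widehat\OC$. The map is constructed by realizing the elementary slice as a quotient of a polar representation, à la Kraft-Procesi.

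For a general special piece, I would iterate this construction over the lattice of orbits in $\widehat\OC$: the $\SG_2^d$ action is built up as the product of the elementary $\SG_2$'s, one per irreducible component of $\widehat\OC\setminus\OC$. The stratification match is then automatic: the $2^d$ subproducts of $\SG_2^d$ arise as isotropy groups for the action on $Y$, and the corresponding quotient strata are precisely the nilpotent orbits in $\widehat\OC$. Rational smoothness follows from the fact that $Y$ is smooth, $|\SG_2^d|$ is invertible in $\mathbb{Q}$, and the quotient of a smooth variety by a finite group acting with order prime to the characteristic is rationally smooth (compare Proposition \ref{prop:E smooth}).

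The main obstacle is the explicit construction of $Y$: verifying that the natural smooth model built from matrix slices really has the correct $\SG_2^d$-quotient, rather than only having the right quotient up to finite surjection or only on an open part. This requires a careful normality argument in classical invariant theory, together with a check that the induced stratification on $Y/\SG_2^d$ matches the Jordan-type stratification on $\widehat\OC$ orbit by orbit. Everything else, in particular the combinatorial counts and the inductive bookkeeping, is fairly mechanical once this geometric core is in place.
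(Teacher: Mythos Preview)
This theorem is not proved in the paper at all: it is quoted verbatim as a result of Kraft and Procesi \cite{KP3}, introduced by the sentence ``More precisely, Kraft and Procesi proved the following''. The paper's contribution in this subsection is the \emph{corollary} about $\FM$-smoothness for $\ell\neq 2$ and the vanishing of the corresponding decomposition numbers, which follows immediately from the quoted theorem together with Proposition~\ref{prop:E smooth}. So there is no ``paper's own proof'' to compare your proposal against.

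Your outline is a reasonable high-level paraphrase of the strategy one finds in \cite{KP3}: combinatorial identification of the orbits in the piece via partition manipulations, reduction to elementary pieces, and an explicit slice/cover construction with an $\SG_2^d$-action. But as written it is only a plan, not a proof, and the genuinely hard part --- building $Y$ and proving that $Y/\SG_2^d \simeq \widehat\OC$ as stratified varieties --- is exactly what you flag as ``the main obstacle'' without resolving. If you were asked to supply a proof here, the correct move is simply to cite \cite{KP3}; if you want to reproduce their argument, you would need to carry out the invariant-theoretic construction in full, which is well beyond a sketch of this length.
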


Actually their result gives more information: by Proposition \ref{prop:E smooth},
the variety $\widehat \OC$ is not only $\KM$-smooth, but also
$\FM$-smooth for $\ell \neq 2$. The following corollary is valid when the base field is $\CM$, and
probably over a base field of characteristic $p\neq 2$.

\begin{corollary}
Let $\OC$ be a special nilpotent orbit in a
classical group, with corresponding special piece $\widehat\OC$.
If $\ell \neq 2$ then $\widehat \OC$ is $\FM$-smooth and,
for all orbits $\OC' \subset \widehat \OC \setminus \OC$, 
and all $\FM$-local systems $\LC$ on $\OC'$, we have
\[
d_{(\OC,\FM),(\OC',\LC)} = 0.
\]
\end{corollary}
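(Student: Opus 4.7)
The plan is to split the corollary into its smoothness claim and its vanishing claim, handling smoothness first and using it as the input for vanishing.

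For smoothness, the Kraft--Procesi theorem quoted just above exhibits the special piece as a quotient $\widehat\OC \simeq Y/\SG_2^d$ with $Y$ smooth. Since $|\SG_2^d| = 2^d$ is coprime to $\ell$ when $\ell \neq 2$, Proposition \ref{prop:E smooth} applies directly to give that $\widehat\OC$ is $\FM$-smooth. I would moreover argue that the same averaging argument works over $\OM$: with $q : Y \to \widehat\OC$ the finite quotient map, the $H$-invariants of $q_* \un\OM_Y[\dim Y]$ form a direct summand (since $|H|$ is invertible in $\OM$) which agrees with the IC complex on the open stratum where $q$ is \'etale. This forces $\ic(\widehat\OC,\OM) = \un\OM_{\widehat\OC}[\dim\widehat\OC]$, the $\OM$-version of smoothness needed below.

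For vanishing, let $j : \widehat\OC \hookrightarrow \ov\OC$ be the open inclusion and let $\OC' \subset \widehat\OC\setminus\OC$ carry an $\FM$-local system $\LC$. Set $S := \ic(\widehat\OC \cap \ov{\OC'},\LC)$, a simple $\FM$-perverse sheaf on $\widehat\OC$; by transitivity of intermediate extension, $\p j_{!*} S = \ic(\ov{\OC'},\LC)$. Applying the multiplicity formula $[B : \p j_{!*}S] = [\p j^* B : S]$ of Subsection \ref{subsec:IC} to $B = \FM\,\ic(\ov\OC,\OM)$, and using that $j^*$ commutes with modular reduction together with $j^*\ic(\ov\OC,\OM) = \ic(\widehat\OC,\OM)$, we get
\begin{equation*}
d^\NC_{(\OC,\FM),(\OC',\LC)} = [\FM\,\ic(\widehat\OC,\OM) : S].
\end{equation*}
By Step~1, $\FM\,\ic(\widehat\OC,\OM) = \un\FM_{\widehat\OC}[\dim\widehat\OC] = \ic(\widehat\OC,\FM)$, which is itself simple and corresponds to the pair $(\OC, \FM)$ (open dense orbit, trivial local system). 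Since $(\OC',\LC)$ gives a different simple perverse sheaf on $\widehat\OC$, the multiplicity on the right vanishes.

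The main obstacle, such as it is, lies in the upgrade of Proposition \ref{prop:E smooth} from $\FM$-smoothness to $\OM$-smoothness: without it, one cannot be sure that $\FM\,\ic(\widehat\OC,\OM)$ coincides with the constant sheaf rather than acquiring extra composition factors from hidden torsion in the $\OM$-IC. The remedy is the straightforward averaging sketched above, which works precisely because the order of the acting group $\SG_2^d$ is coprime to $\ell$.
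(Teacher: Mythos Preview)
Your argument is correct and follows the approach the paper has in mind: the paper states this corollary without proof, treating it as immediate from the Kraft--Procesi quotient description together with Proposition~\ref{prop:E smooth}. You have in fact been more careful than the paper by isolating the need for $\OM$-smoothness of $\widehat\OC$ (not just $\FM$-smoothness) in order to control $\FM\,\ic(\widehat\OC,\OM)$, and your averaging argument via the finite quotient $q:Y\to\widehat\OC$---identifying $\OM_{\widehat\OC}[\dim]$ as the $H$-invariant summand of the small pushforward $q_*\OM_Y[\dim]=\ic(\widehat\OC,\OM H)$, hence as $\ic(\widehat\OC,\OM)$---is exactly the right way to fill that gap.
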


Seeing the result of Kraft and Procesi, Lusztig conjectured that also in exceptional
types, any special piece $\widehat \OC$ is the quotient of a smooth variety by a precise
finite group $\Gamma_\OC$ (which is a subgroup of Lusztig's canonical quotient of
$A_G(\OC)$, see \cite{LusUnip}). In \cite{FJLS} we prove a closely
related result: if $\OC'$ denotes the minimal orbit in the special
piece $\widehat \OC$, and if $V$ denotes the reflection representation
of $\Gamma_\OC$ (which is always a Coxeter group), then the intersection of $\ov\OC$ with a
transversal slice to $\OC'$ is isomorphic to the quotient of $(V
\oplus V^*)^k$ by $\Gamma_\OC$, where $k$ is some integer (usually one
in exceptional types). Thus we can deduce a result like the above
corollary for exceptional types, for the primes $\ell$ not dividing
the order of $\Gamma_\OC$.

\section{The case of $GL_n$}
\label{sec:gl_n}

In this section, we study the case of $G = GL_n$.
Then $\gg = \gg\lg_n$ is the Lie algebra of all $n \times n$ matrices,
and $\NC$ is the closed subset of nilpotent matrices in the usual
sense, with $GL_n$ acting by conjugation.
Given a nilpotent matrix $x$ in $\NC$, the ordered list of the
sizes of Jordan blocks in the Jordan normal form of $x$ determines a
partition of $n$. This defines a bijection between the nilpotent
orbits and the set of partitions of $n$. We denote by $\OC_\l$ the
orbit corresponding to $\l$, and by $x_\l$ an element in $\OC_\l$.
We have $A_G(x_\l) = 1$ for all $\l \vdash n$, hence we will need to
consider only constant local systems. So both $\PG_\KM$ and $\PG_\FM$
are in natural bijection with the set $\PG_n$ of all partitions of
$n$. To simplify the notation, we set $\ic_\EM(\l) = \ic_\EM(x_\l,1)$ and
$d^\NC_{\l,\mu} := d^\NC_{(x_\l,1),(x_\mu,1)}$.

\subsection{Springer correspondence for $GL_n$}

Let us recall the parametrization of simple modules for the symmetric
group \cite{JAMESirr}.
The simple modules of $\KM \SG_n$ are the Specht modules $S^\lambda$,
where $\lambda$ runs over the partitions of $n$. Those have an
integral form $S^\lambda_\OM$ equipped with a symmetric bilinear form $\beta$
which is non-degenerate after extension of the scalars to
$\KM$. However the rank of those forms may drop after tensoring with
$\FM$. The module $D^\mu := \FM \otimes_\OM S^\mu_\OM / \Rad(\FM\otimes_\OM \beta)$
is non-zero exactly when $\mu$ is $\ell$-regular (that is, all parts
of $\mu$ have multiplicity strictly smaller than $\ell$). The modules
$D^\mu$, for $\mu$ running over all $\ell$-regular partitions of $n$,
form a complete set of representatives of the isomorphism classes of
$\FM\SG_n$-modules.

We note $d^{\SG_n}_{\lambda,\mu} := d^{\SG_n}_{S^\lambda,D^\mu}$.
The decomposition matrix of $\SG_n$ is
unitriangular with respect to the dominance order of partitions:
for $\lambda$ and $\mu$ two partitions of $n$, where $\mu$ is $\ell$-regular, we have
\begin{gather}
d^{\SG_n}_{\lambda,\lambda} = 1;\\
d^{\SG_n}_{\lambda,\mu} \neq 0 \Imp \lambda \leq \mu.
\end{gather}
Recall that we have a similar property for perverse
sheaves (see \eqref{eq:unitri}), again for the dominance order (now $\mu$ is not assumed to be $\ell$-regular)
\begin{gather}
d^{\NC}_{\lambda,\lambda} = 1;\\
d^{\NC}_{\lambda,\mu} \neq 0 \Imp \lambda \geq \mu.
\end{gather}
In characteristic zero, it is known that $\Psi_\KM$ is a bijection,
which sends the Specht module $S^\l$ to the pair $(x_{\l'},1)$,
where $\l'$ denotes the partition transposed to $\l$.
We will see that those facts are enough to determine
the modular Springer correspondence for $G = GL_n$.

\begin{theorem}\label{thm:gln}
Suppose $G = GL_n$. If $\mu$ is an $\ell$-regular partition, then we have
\[
\Psi_\FM(D^\mu) = (x_{\mu'},1)
\]
where $\mu'$ is the partition transposed to $\mu$. Thus we have,
for two partitions $\l$ and $\mu$, with $\mu$ $\ell$-regular,
$d^{\SG_n}_{\l,\mu} = d^\NC_{\l',\mu'}$.
\end{theorem}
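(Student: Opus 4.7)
The plan. Since every centralizer component group $A_G(x_\lambda)$ is trivial in $GL_n$, each simple $G$-equivariant $\FM$-perverse sheaf on $\NC$ is of the form $\ic_\FM(x_\nu,1)$, so I may write $\Psi_\FM(D^\mu)=(x_{\nu_\mu},1)$ for a unique partition $\nu_\mu\vdash n$. The content of the theorem is the identification $\nu_\mu=\mu'$; once this is proved, the equality $d^{\SG_n}_{\lambda,\mu}=d^\NC_{\lambda',\mu'}$ is immediate from Theorem \ref{th:d} combined with the ordinary Springer correspondence $\Psi_\KM(S^\lambda)=(x_{\lambda'},1)$ recalled just above. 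So the only real task is to determine $\nu_\mu$, and I would do this by applying Theorem \ref{th:d} twice and playing the two available triangularities against each other.

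The key identity coming out of Theorem \ref{th:d} is
\[
d^{\SG_n}_{\lambda,\mu}=d^\NC_{\lambda',\nu_\mu},
\]
valid for every partition $\lambda\vdash n$. Taking $\lambda=\mu$ and using $d^{\SG_n}_{\mu,\mu}=1$ (James' triangularity, which applies since $\mu$ is $\ell$-regular) gives $d^\NC_{\mu',\nu_\mu}=1$; the $\NC$-triangularity $d^\NC_{\alpha,\beta}\neq 0 \Imp \alpha\geq\beta$ then forces $\mu'\geq\nu_\mu$ in the dominance order. Conversely, $d^\NC_{\nu_\mu,\nu_\mu}=1$ plugged into the same identity (now with $\lambda=\nu_\mu'$) yields $d^{\SG_n}_{\nu_\mu',\mu}=1$, and James' triangularity $d^{\SG_n}_{\alpha,\beta}\neq 0 \Imp \alpha\leq\beta$ then gives $\nu_\mu'\leq\mu$, equivalently $\mu'\leq\nu_\mu$ since transposition reverses the dominance order. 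Sandwiching produces $\nu_\mu=\mu'$, and the theorem follows.

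I do not anticipate any serious obstacle: all the inputs are either proved earlier in the paper (Theorem \ref{th:d}, the orbit-closure triangularity of the $\NC$ decomposition matrix, and the identity $d^\NC_{\lambda,\lambda}=1$) or classical (James' triangularity, the ordinary Springer correspondence for $GL_n$). The one point that merits care is orientational: the two triangularities point in opposite directions with respect to dominance, and it is precisely the transposition $\lambda\mapsto\lambda'$ built into the Springer correspondence for $GL_n$ that allows Theorem \ref{th:d} to convert them into two matching bounds whose intersection pins down $\nu_\mu$ uniquely. In particular, the argument never uses a combinatorial description of the simple modules $D^\mu$.
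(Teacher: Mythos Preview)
Your proof is correct and follows essentially the same approach as the paper's own proof: both use Theorem \ref{th:d} together with the known ordinary Springer correspondence $\Psi_\KM(S^\lambda)=(x_{\lambda'},1)$ to translate the two opposite triangularities (James' for $\SG_n$ and the orbit-closure one for $\NC$) into the pair of inequalities $\nu_\mu\leq\mu'$ and $\nu_\mu\geq\mu'$, obtained by plugging in $\lambda=\mu$ and $\lambda=\nu_\mu'$ respectively. The paper's writeup is slightly terser but the logic is identical.
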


\begin{proof}
Let $\mu$ be an $\ell$-regular partition. We want to show that
$\Psi_\FM(\mu) = \mu'$. On the one hand, we have
\[
d^\NC_{\mu',\Psi_\FM(\mu)} 
= d^\NC_{\Psi_\KM(\mu), \Psi_\FM(\mu)}
= d^{\SG_n}_{\mu,\mu} = 1 \neq 0
\]
which proves that $\Psi_\FM(\mu) \leqslant \mu'$.

On the other hand, we have
\[
d^{\SG_n}_{\Psi_\FM(\mu)',\mu}
= d^\NC_{\Psi_\KM(\Psi_\FM(\mu)'),\Psi_\FM(\mu)}
= d^\NC_{\Psi_\FM(\mu),\Psi_\FM(\mu)}
= 1 \neq 0
\]
which proves that $\Psi_\FM(\mu)' \leqslant \mu$, that is,
$\Psi_\FM(\mu) \geqslant \mu'$.
\end{proof}

Note that this result has been generalized in \cite{AHJRI}.

\subsection{Row and column removal rule}
\label{subsec:rc}

We introduce the ``characteristic functions''
\[
\begin{array}{rcl}
\chi_{\l,\mu}
&=& \sum_{i\in\ZM} (-1)^i \dim_\KM \HC^i_{x_\mu} \ic(\ov\OC_\l,\KM)\\
&=& \sum_{i\in\ZM} (-1)^i \dim_\FM \HC^i_{x_\mu} \FM \ic(\ov\OC_\l,\OM)
\end{array}
\]
and
\[
\phi_{\nu,\mu}
= \sum_{i\in\ZM} (-1)^i \dim_\FM \HC^i_{x_\mu} \ic(\ov\OC_\nu,\FM)
\]
These form triangular systems, in the sense that
$\chi_{\l,\mu}$ can be non-zero only if $\mu \leqslant \l$,
and $\phi_{\nu,\mu}$ can be non-zero only if $\mu \leqslant \nu$.
We have
\[
\chi_{\l,\mu} = \sum_{\nu} d_{\l,\nu} \phi_{\nu,\mu}
\]
and $d_{\l,\nu}$ can be non-zero only if $\nu \leqslant \l$.
Moreover, we have $\chi_{\l,\l} = \phi_{\l,\l} = 1$, and
$d_{\l,\l} = 1$.

Kraft and Procesi found a row and column removal rule for the
singularities of the closures of the nilpotent orbits in type
$A_{n-1}$ \cite{KP1}. Actually, they state the result when the base
field is $\CM$. It is probably true also in characteristic $p$, but
since this result is not available, for the rest of this section we
work over $\CM$. One can use the Fourier-Sato transform instead of the
Fourier-Deligne transform and everything goes through.

\begin{proposition}
Let $\OC_\mu \subset \ov\OC_\l$ be a degeneration of nilpotent orbits
in $\gg\lg_n$ and assume that the first $r$ rows and the first $s$
columns of $\l$ and $\mu$ coincide. Denote by $\lamh$ and $\muh$ the
Young diagrams obtained from $\l$ and $\mu$ by erasing these rows and
columns. Then $\OC_{\muh} \subset \ov\OC_{\lamh}$ in a smaller
nilpotent cone $\NCh \subset \gg\lg_{\hat n}$, and we have
\[
\codim_{\ov\OC_{\lamh}} \OC_{\muh} = \codim_{\ov\OC_\l} \OC_\mu
\quad\text{ and }\quad
\Sing(\ov\OC_{\lamh}, \OC_{\muh}) = \Sing(\ov\OC_\l, \OC_\mu)
\]
\end{proposition}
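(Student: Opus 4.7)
The plan is to follow the original argument of Kraft and Procesi \cite{KP1}. First I would reduce the general statement to two atomic cases by iteration: removing a single common top row, and removing a single common leftmost column. Since these two operations commute, and any matching block of initial rows and columns can be peeled off one row or one column at a time, it suffices to produce a smooth equivalence of singularities and to verify the codimension identity in each atomic case.

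The combinatorial parts are easy. The inclusion $\OC_{\muh} \subset \ov\OC_{\lamh}$ is the statement that the dominance order on partitions is preserved by stripping a common first row or first column, which follows at once from the definition in terms of partial sums of parts. For the codimensions, the standard formula $\dim \OC_\l = n^2 - \sum_i (\l_i')^2$ together with the observation that column removal decrements a single common entry of $\l'$ and $\mu'$, and row removal shifts all parts of $\l'$ and $\mu'$ down by the same amount, yields $\codim_{\ov\OC_{\lamh}}\OC_{\muh} = \codim_{\ov\OC_\l}\OC_\mu$ on the nose.

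The real content is the smooth equivalence. The strategy is to model $\Sing(\ov\OC_\l,\OC_\mu)$ by the transverse Slodowy slice $S_\mu \cap \ov\OC_\l$, where $S_\mu = x_\mu + \gg^y$ for an $\sl_2$-triple $(x_\mu,h,y)$, and to produce an explicit isomorphism between this slice and $(S_{\muh}\cap\ov\OC_{\lamh})\times \AM^N$ inside $\gg\lg_n$. One chooses a basis of $\CM^n$ indexed by the boxes of the Young diagram of $\mu$, compatible with the Jordan structure, so that $\gg^y$ is naturally presented as a matrix space indexed by pairs of boxes. Stripping the common top row (resp.\ left column) then corresponds to removing those rows and columns of the matrix that are indexed by the stripped boxes, and the key Kraft--Procesi computation is that the rank conditions cutting out $\ov\OC_\l$ inside $S_\mu$ decouple into the rank conditions defining $\ov\OC_{\lamh}$ inside $S_{\muh}$ times an unconstrained affine factor.

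The main obstacle is precisely this decoupling of rank conditions. While the combinatorics of Jordan types is transparent, checking that the equations $\mathrm{rk}(x^k)\leq \sum_{i>k}\l_i'$ applied to points of $S_\mu$ split cleanly under the chosen block decomposition requires a careful case analysis, distinguishing matrix blocks indexed by stripped versus unstripped boxes and tracking degrees of nilpotency across each block. This is also where working over $\CM$ (or at least in characteristic zero) enters, since the argument relies on the full strength of $\sl_2$-theory to diagonalize the action of $h$ on $\gg$ and to match the Slodowy slices on both sides; it is the reason the author explicitly flags that \cite{KP1} is stated over $\CM$ and only conjecturally holds for $p > 0$.
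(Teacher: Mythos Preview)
The paper does not prove this proposition. It is quoted verbatim as a result of Kraft and Procesi \cite{KP1} and used as a black box to derive the subsequent row and column removal rule for decomposition numbers (Proposition~\ref{prop:rc}). So there is no ``paper's own proof'' to compare against; your proposal is an attempt to reconstruct what the paper deliberately imports from the literature.

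That said, your outline is a reasonable summary of the Kraft--Procesi strategy: iterate single row/column removals, handle the combinatorics of dominance and codimension directly, and establish the smooth equivalence via a transverse slice. One small caveat: in \cite{KP1} the argument does not literally proceed through the Slodowy slice $x_\mu + \gg^y$ and abstract $\sl_2$-theory as you describe, but via a more hands-on linear-algebraic construction (building nilpotent endomorphisms on a larger space from those on a smaller one by adding a fixed block). The effect is the same --- one exhibits $\ov\OC_\l$ near $\OC_\mu$ as a product of $\ov\OC_{\lamh}$ near $\OC_{\muh}$ with a smooth factor --- but if you were to write this up in detail you would want to follow their explicit coordinates rather than the Slodowy formalism, or else verify that the two transverse slices give the same answer. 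Your remark about characteristic zero matches exactly the caveat the paper itself makes just before stating the proposition.
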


All the partitions $\nu$ in the interval $[\mu,\l] = \{\nu\mid \mu
\leqslant \nu \leqslant \l\}$ have the same first $r$ rows and first
$s$ columns. For $\nu$ in $[\mu,\l]$, let us denote by $\nuh$ the
partition obtained from $\nu$ by erasing them.

The proposition implies that, for all $\eta \leqslant \zeta$ in
$[\mu,\l]$, the local intersection cohomology of $\ov\OC_{\etah}$ along
$\OC_{\zeth}$ is the same as the local intersection cohomology of
$\ov\OC_\eta$ along $\OC_\zeta$, both over $\KM$ and over $\FM$, and
thus $\chi_{\eta,\zeta} = \chi_{\etah,\zeth}$ and
$\phi_{\etah,\zeth} = \phi_{\etah,\zeth}$.

Since the decomposition numbers can be deduced from this
information (for $GL_n$, we only have trivial $A_G(x)$), we find a row
and column removal rule for the decomposition numbers for
$GL_n$-equivariant perverse sheaves on the nilpotent cone of
$GL_n$.

\begin{proposition}\label{prop:rc}
With the notations above, we have
\begin{equation}
d^\NC_{\l,\mu} = d^\NCh_{\lamh,\muh}
\end{equation}
\end{proposition}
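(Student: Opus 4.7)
My proof plan follows the setup already laid out in the paragraphs preceding the proposition: we have two ``triangular systems'' relating the Euler characteristics $\chi_{\lambda,\mu}$, $\phi_{\nu,\mu}$ and the decomposition numbers $d^\NC_{\lambda,\nu}$, and the Kraft--Procesi local equivalence of singularities forces the two systems in $[\mu,\lambda]$ to agree with the corresponding systems in $[\muh,\lamh]$. The goal is then a triangular inversion.

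First, I would record more carefully why the smooth equivalence of singularities forces $\chi_{\eta,\zeta} = \chi_{\etah,\zeth}$ and $\phi_{\eta,\zeta} = \phi_{\etah,\zeth}$ for every pair $\eta \leq \zeta$ in $[\mu,\lambda]$. The Kraft--Procesi proposition says that, locally around a point of $\OC_\zeta$ in $\ov\OC_\eta$, the pair $(\ov\OC_\eta,\OC_\zeta)$ is smoothly equivalent to the pair $(\ov\OC_{\etah},\OC_{\zeth})$, and the two codimensions coincide. Since the IC complex (with any coefficients) is characterized by local conditions and depends only on the analytic/étale-local structure of the pair, its stalk cohomology $\HC^i_{x_\zeta}\ic(\ov\OC_\eta,\EM)$ agrees with $\HC^i_{x_{\zeth}}\ic(\ov\OC_{\etah},\EM)$; matching codimensions ensure that the perverse normalizing shifts agree, so no sign or degree shift intervenes. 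Taking alternating sums of dimensions gives the desired equalities of $\chi$ and $\phi$, both over $\KM$ and over $\FM$.

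Next, I would invoke the ``change of basis'' formula
\[
\chi_{\lambda,\mu} = \sum_{\mu \leq \nu \leq \lambda} d^\NC_{\lambda,\nu}\,\phi_{\nu,\mu},
\]
which is triangular in $\nu$ (since $d^\NC_{\lambda,\nu}=0$ unless $\nu \leq \lambda$, $\phi_{\nu,\mu}=0$ unless $\mu \leq \nu$, and $\phi_{\nu,\nu}=1$). The sum is therefore indexed precisely by the interval $[\mu,\lambda]$, on which the row/column-removal map $\nu \mapsto \nuh$ is a bijection onto $[\muh,\lamh]$. Inverting the system downward from $\nu=\lambda$ (where $d^\NC_{\lambda,\lambda}=1$), one expresses each $d^\NC_{\lambda,\nu}$ for $\nu \in [\mu,\lambda]$ as a universal polynomial in the numbers $\chi_{\lambda,\nu'}$ and $\phi_{\nu',\nu''}$ for $\nu',\nu'' \in [\mu,\lambda]$. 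Writing the identical inversion for $\lamh$ and $[\muh,\lamh]$, and using the equalities established in the first step, a straightforward downward induction on the interval yields $d^\NC_{\lambda,\nu} = d^{\NCh}_{\lamh,\nuh}$ for all $\nu \in [\mu,\lambda]$, specializing to the desired equality at $\nu=\mu$.

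The main obstacle is really just the first step: justifying cleanly that a smooth equivalence of singularities between the pairs $(\ov\OC_\eta,\OC_\zeta)$ and $(\ov\OC_{\etah},\OC_{\zeth})$ transports IC stalks with both $\KM$ and $\FM$ coefficients, and that the normalizing shifts match. Once this is in hand, the triangular inversion is purely formal; the trivial-component-group hypothesis (we are in $GL_n$, so no nontrivial local systems enter $\PG_\EM$) makes the bookkeeping of indices a matter of partitions alone, with no extra characters to track. Note also that this whole argument is carried out over $\CM$, as stated, since the Kraft--Procesi input is formulated there; no modification of the inversion argument is needed for the modular coefficients $\FM$, because the $\phi$ and $\chi$ matrices with $\FM$-coefficients satisfy the same triangular shape.
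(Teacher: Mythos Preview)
Your proposal is correct and follows essentially the same route as the paper: both use the Kraft--Procesi smooth equivalence to identify the $\chi$ and $\phi$ values on the interval $[\mu,\lambda]$ with those on $[\muh,\lamh]$, and then invert the unitriangular system $\chi = d\cdot\phi$ to conclude. The only cosmetic difference is that the paper phrases the inversion as a full matrix identity over all $\eta,\zeta\in[\mu,\lambda]$ while you fix $\eta=\lambda$ and vary $\zeta$, which amounts to extracting one row of the same matrix equation (note a minor slip: your ``$\eta\le\zeta$'' should read $\zeta\le\eta$).
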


\begin{proof}
The decomposition numbers
$(d^\NC_{\eta,\zeta})_{\mu \leqslant \zeta \leqslant \eta \leqslant \l}$
are the unique solution of the triangular linear system
\[
\chi_{\eta,\zeta} = \sum_{\mu \leqslant \nu \leqslant \l} d^\NC_{\eta,\nu} \phi_{\nu,\zeta}
\]
whereas the decomposition numbers
$(d^\NCh_{\etah,\zeth})_{\muh \leqslant \zeth \leqslant \etah \leqslant \lamh}$
are the unique solution of the triangular linear system
\[
\chi_{\etah,\zeth} = \sum_{\mu \leqslant \nu \leqslant \l}
d^\NCh_{\etah,\nuh} \phi_{\nuh,\zeth}
\]
Since we have $\chi_{\eta,\zeta} = \chi_{\etah,\zeth}$ and
$\phi_{\eta,\zeta} = \phi_{\etah,\zeth}$, the two systems are
identical, so that $d^\NC_{\eta,\zeta} = d^\NCh_{\etah,\zeth}$ for all
$\zeta \leqslant \eta$ in $[\mu,\l]$. In particular, we have
$d^\NC_{\l,\mu} = d^\NCh_{\lamh,\muh}$.
\end{proof}

\section{The Grothendieck and Springer sheaves for $SL_2$}
\label{sec:sl2}

We will completely describe $\KC$ and $\KC_\NC$ in the case $G = SL_2$.
Apart from the open stratum $\gg_\rs$, we just
have the two nilpotent orbits $\OC_\reg = \OC_\mini = \OC_{(2)}$
and $\OC_\triv = \OC_\subreg = \OC_{(1^2)} = \{0\}$.
On $\gg_\rs$, we will only consider local systems which become trivial
after a pullback by $\pi_\rs$.
We have $W = \SG_2$. The local system ${\pi_\rs}_* \EM$ corresponds to
the regular representation $\EM\SG_2$.

In characteristic $0$, the group algebra is semi-simple, and the perverse sheaf
$\KM\KC_\rs$ splits as the sum of the constant perverse sheaf $C^\rs$ and
the shifted local system $C_\e^\rs$ corresponding to the sign
representation of $\SG_2$. These two simple components are sent by
${j_\rs}_{!*}$ on two simple perverse sheaves on $\gg$, the constant
perverse sheaf $C$ (since $\gg$ is smooth), and the other one, $C_\e$.
Let us denote by $A$ the simple perverse sheaf supported on $\{0\}$,
and by $B$ the simple perverse sheaf $\ic(\ov\OC_\reg,\KM)$.
Since $\FC(C) = A$, we must have $\FC(C_\e) = B$. This gives the
Springer correspondence for $\sg\lg_2$ by Fourier transform.

Let us make tables for the stalks of the perverse sheaves involved.
We have a line for each stratum, and one column for each cohomology
degree. If $x$ is a point of a given stratum $\OC$ and $i$ is an integer,
the corresponding entry in the table of a perverse sheaf $\AC$ will be
the class of $\HC^i_x \AC$, seen as a representation of a suitable
group $A(\OC)$, in the Grothendieck group of $\EM A(\OC)$.
There is a column $\chi$ describing the alternating sum of the stalks
of each stratum.

Let us first describe $\KM\KC$. So, over $\gg_\rs$, we have the regular
representation of $\SG_2$. Over $\OC_\reg$, the fibers are single
points, so the cohomology of $\BC_{x_\reg}$ is just $\KM$.
But we have $\BC_0 = \BC = G/B = \PM^1$. We get the following table for
$\KM\KC$.
\[
\begin{array}{|c|c|c|c|c|c|c|}
\hline
\text{Stratum} & \text{Dimension} & \chi & -3 & -2 & -1 & 0\\
\hline
\gg_\rs & 3 & -1 - \e & \KM \oplus \KM_\e&.&.&.\\
\hline
\OC_\reg & 2 & -1 & \KM &.&.&.\\
\hline
\OC_\triv & 0 & -2 & \KM & . & \KM &.
\\
\hline 
\end{array}
\]
It is the direct sum of the two simple perverse sheaves $C$
\[
\begin{array}{|c|c|c|c|c|c|c|}
\hline
\text{Stratum} & \text{Dimension} & \chi & -3 & -2 & -1 & 0\\
\hline
\gg_\rs & 3 & -1  & \KM &.&.&.\\
\hline
\OC_\reg & 2 & -1 & \KM &.&.&.\\
\hline
\OC_\triv & 0 & -1 & \KM & . & . & .
\\
\hline 
\end{array}
\]
and $C_\e$, which we deduce by subtraction (we have a direct sum!)
\[
\begin{array}{|c|c|c|c|c|c|c|}
\hline
\text{Stratum} & \text{Dimension} & \chi & -3 & -2 & -1 & 0\\
\hline
\gg_\rs & 3 & - \varepsilon & \KM_\varepsilon&.&.&.\\
\hline
\OC_\reg & 2 & 0 & . &.&.&.\\
\hline
\OC_\triv & 0 & -1 & . & . & \KM &.\\
\hline 
\end{array}
\]
The simple $G$-equivariant perverse sheaves on $\NC$ are
$B = \ic(\ov\OC_\reg,\KM)$,
\[
\begin{array}{|c|c|c|c|c|c|c|}
\hline
\text{Stratum} & \text{Dimension} & \chi & -3 & -2 & -1 & 0\\
\hline
\gg_\rs & 3 & 0 & .&.&.&.\\
\hline
\OC_\reg & 2 & 1 & . & \KM &.&.\\
\hline
\OC_\triv & 0 & 1 & . & \KM &.&.\\
\hline 
\end{array}
\]
$A = \ic(\ov\OC_\triv,\KM)$
\[
\begin{array}{|c|c|c|c|c|c|c|}
\hline
\text{Stratum} & \text{Dimension} & \chi & -3 & -2 & -1 & 0\\
\hline
\gg_\rs & 3 & 0 & .&.&.&.\\
\hline
\OC_\reg & 2 & 0 & . & . &.&.\\
\hline
\OC_\triv & 0 & 1 & . & . &.& \KM\\
\hline 
\end{array}
\]
and the cuspidal $B_\e = \ic(\ov\OC_\reg,\KM_\e)$,
which is clean (its intermediate extension is
just the extension by zero), and stable by the
Fourier-Deligne transform, by the general theory
\[
\begin{array}{|c|c|c|c|c|c|c|}
\hline
\text{Stratum} & \text{Dimension} & \chi & -3 & -2 & -1 & 0\\
\hline
\gg_\rs & 3 & 0 & .&.&.&.\\
\hline
\OC_\reg & 2 & 1 & . & \KM_\e &.&.\\
\hline
\OC_\triv & 0 & 1 & . & . &.&.\\
\hline 
\end{array}
\]

We can check that, applying $i_\NC^*[-1]$ to $\KC$, we recover $\KC_\NC$.
This functor sends $C$ to $B$ and $C_\e$ to $A$. There is a twist by
the sign character between the two versions of the Springer
representations (by Fourier-Deligne transform, and by restriction).

So, to summarize the situation over $\KM$, we have
\[
\begin{array}{|c|c|c|}
\hline
\gg_\rs & \gg & \NC\\
\hline
C^\rs \oplus C^\rs_\e & C \oplus C_\e & B \oplus A\\
\hline
\end{array}
\]

If $\ell \neq 2$, the situation over $\FM$ is similar. Now let us
assume that $\ell = 2$. Then the sign representation becomes
trivial. The regular representation is an extension of the trivial module by
itself, so similarly $\FM\KC_\rs$ is an extension of the constant $c_\rs$
(reduction of $C_\rs$) by itself.

Now $\FM\KC$ is as follows:
\[
\begin{array}{|c|c|c|c|c|c|c|}
\hline
\text{Stratum} & \text{Dimension} & \chi & -3 & -2 & -1 & 0\\
\hline
\gg_\rs & 3 & -2 & \FM \SG_2&.&.&.\\
\hline
\OC_\reg & 2 & -1 & \FM &.&.&.\\
\hline
\OC_\triv & 0 & -2 & \FM & . & \FM &.
\\
\hline 
\end{array}
\]

It must be made of the simple perverse sheaves $a$, $b$ and $c$, where
$c$ is the constant on $\gg$ (it is the reduction of $C$, and has the
same table with $\FM$ instead of $\KM$), $a$ is the constant on the
origin (the reduction of $A$), and $b = \ic(\ov\OC_\reg,\FM)$
has the following table
\[
\begin{array}{|c|c|c|c|c|c|c|}
\hline
\text{Stratum} & \text{Dimension} & \chi & -3 & -2 & -1 & 0\\
\hline
\gg_\rs & 3 & 0 & .&.&.&.\\
\hline
\OC^\reg & 2 & 1 & . & \FM &.&.\\
\hline
\OC^\triv & 0 & 0 & . & \FM &\FM&.\\
\hline 
\end{array}
\]

Looking at the $\chi$ functions, we see that
$[\FM\KC] = 2[c] + [b]$ in the Grothendieck group of
$\p\MC_G(\NC,\FM)$. We know that the top and the socle of $\FM\KC$
must be $c$, the intermediate extension of $c$, and that $b$ cannot
appear either in the top nor in the socle. Thus there is only one
possible Loewy structure:
\[
\FM\KC = 
\begin{array}{c}
c\\
b\\
c
\end{array}
\]

Similarly, we find
\[
\FM\KC_\NC = 
\begin{array}{c}
a\\
b\\
a
\end{array}
\]

Thus, as we already know, $\FC(c) = a$, but we also deduce that
$\FC(b) = b$.

The restriction functor $i^*_\NC$ sends $c$ to the reduction of $B$,
which has the following Loewy structure (by Section \ref{sec:dec} and the more detailed information
in \cite{decperv}): 
\[
\begin{array}{c}
b\\
a
\end{array}
\]

The reduction of $C_\e$ has structure
\[
\begin{array}{c}
c\\
b
\end{array}
\]
and it restricts to $a$ (the reduction of $A$, which is the restriction of $C_\e$).

So we have the following situation.

\[
\begin{array}{|c|c|c|}
\hline
\gg_\rs & \gg & \NC\\
\hline
\begin{array}{c}
c_\rs\\
c_\rs
\end{array}
&
\begin{array}{c}
c\\
b\\
c
\end{array}
&
\begin{array}{c}
a\\
b\\
a
\end{array}\\
\hline
\end{array}
\]

And we check that we can get $\KC_\NC$ either by Fourier-Deligne
transform, or by restriction. For the Springer correspondence, $b$ is
missing. In appears neither in the top nor in the socle. Thus it is said to be cuspidal.
For the modular generalized Springer correspondence in type $A$, we refer to \cite{AHJRI} (general linear group)
and \cite{AHJRII} (special linear groups).

Another way to involve $b$ would be to replace the regular representation of the symmetric group by
the direct sum of all induced modules from parabolic subgroups (whose endomorphism algebra is the Schur algebra).
What we lack here is the induction from $\SG_2$ to $\SG_2$, which gives the
trivial module, and hence the constant shifted local system $c_\rs$. If we start from the direct sum
of this constant local system with $\KC_\rs$, then taking the intermediate
extension and restricting to the nilpotent cone, we get:
\[
\begin{array}{|c|c|c|}
\hline
\gg_\rs & \gg & \NC\\
\hline
c_\rs
\oplus
\begin{array}{c}
c_\rs\\
c_\rs
\end{array}
&
c \oplus
\begin{array}{c}
c\\
b\\
c
\end{array}
&
\begin{array}{c}
b\\
a
\end{array}
\oplus
\begin{array}{c}
a\\
b\\
a
\end{array}\\
\hline
\end{array}
\]
where both $a$ and $b$ appear as the top constituants of the restriction.
The structure of the direct summands is exactly similar to that of
the projective modules of the Schur algebra of $GL_2$ in degree $2$. This is not surprising by the results of 
\cite{Mautner:schur}.

\section{Tables for exceptional groups}
\label{sec:exc}

In this Section we will determine explicitly the modular Springer correspondence for all exceptional groups.
Since we consider the non-generalized modular Springer correspondence, only local systems with trivial central
character appear, and we may assume that $G$ is simple and adjoint.

Theorem \ref{th:d} is a powerful tool to determine the modular Springer
correspondence, if one knows enough about the decomposition matrix of the Weyl group:
the case of $GL_n$ was treated this way in Theorem \ref{thm:gln}. 
In fact, the results in Subsection \ref{subsec:basicsets} imply that
we can determine explicitly the modular Springer correspondence for any
given type if the decomposition matrix of the Weyl group is known. Much better,
the knowledge of the ordinary character table of the Weyl group is actually enough:
consider the matrix of the restrictions of the characters to the $\ell$-regular
conjugacy classes, in some total order compatible with the Springer correspondence
(nilpotent orbits are ordered by closure inclusions, and we choose a suitable order on the characters
of each component group). Then the Springer basic set consists of the characters whose restriction is
not in the linear span of the preceding restrictions (i.e. the lines making the rank increase by 1, from top to bottom).
Let us give the example of $G_2$ for $\ell = 2$ to illustrate this: 

\[
\tiny
\begin{array}{cc|*{4}c}
0&\chi_{1,0}&1&1\\
A_1&\chi_{1,3}'&1&1\\
\tilde A_1&\chi_{2,2}&2&-1\\
G_2(a_1),3&\chi_{2,1}&2&-1\\
G_2(a_1),21&\chi_{1,3}''&1&1\\
G_2&\chi_{1,6}&1&1\\
\end{array}
\]

The notation for the characters of the Weyl group is the standard one: the first integer denotes the degree,
and the second one denotes the first symmetric power of the natural representation where the character appears.
The notation for pairs is as follows: if the component group is trivial, we only write the orbit;
otherwise it is a symmetric group, and we indicate the irreducible local system by a partition.
Of course the trivial character is in the image of the Springer correspondence. The next character
$\chi_{1,3}'$ has the same restriction as the trivial character, hence it is not in the Springer basic set.
So the orbit $A_1$ (with the trivial local system) is not in the image of the modular Springer correspondence.
However the character $\chi_{2,2}$ has a restriction which is not in the span of the preceding lines,
hence it is in the Springer basic set. The modular reduction is the sum of a new Brauer character and possibly
some copies of the trivial one (actually none, since they are not in the same block; but for the sake of explaining
this method, let us pretend that we do not know it). Despite this ambiguity, this
is already enough information to characterize this new Brauer character (although we do not know it explicitly),
and to know that it is sent to $\tilde A_1$ via the modular Springer correspondence.

We could have proceeded that way, however
for Weyl groups of exceptional types, the decomposition matrices are completely known anyway
(see \cite{Khos, KhosM} for $\ell > 2$, and they are obtainable by GAP3 in all exceptional types for all characteristics),
so we will display the information in the form of those decomposition matrices.
The image of the modular Springer correspondence will be given by the top $1$'s in each column.
As usual, it is more convenient to display the information block by block. Blocks of defect zero
behave just as in characteristic zero. For blocks of defect one, one can display the information in the form
of a Brauer tree (which here is a line, because all characters are rational, hence real; besides, there is no exceptional vertex). For each block of defect one, there is one pair not appearing in the modular Springer correspondence.
We felt that, even if the decomposition matrices were already known, it was useful to include them for several reasons:
we display them all together, blockwise, with the characters ordered via the Springer correspondence, with labels
both in terms of characters and in terms of orbits and local systems, with a uniform and standard notation. In this way, the ordinary and modular Springer correspondences can be read off easily from them.

\subsection{Type $G_2$}

\subsubsection{Case $\ell = 2$}
We have the principal block, of defect two:
\[
\tiny
\begin{array}{cc|*{3}c}
0&\chi_{1,0}&1\\
A_1&\chi_{1,3}'&1\\
G_2(a_1),21&\chi_{1,3}''&1\\
G_2&\chi_{1,6}&1\\
\end{array}
\]
and another block of defect one:
\[
\tiny
\begin{array}{cc|*{3}c}
\tilde A_1&\chi_{2,2}&1\\
G_2(a_1),3&\chi_{2,1}&1\\
\end{array}
\]
This confirms that the Springer basic set is $\{\chi_{1,0}, \chi_{2,2}\}$, and that the image
of the modular Springer correspondence is $\{0, \tilde A_1\}$.

\subsubsection{Case $\ell = 3$}
We have two blocks of defect one:
\[
\tiny
\begin{array}{cc|cc c cc|cc}
0&\chi_{1,0}&1&.  & \qquad\qquad & A_1&\chi_{1,3}'&1&.\\
\tilde A_1&\chi_{2,2}&1&1 & \text{\normalsize and} & G_2(a_1),3&\chi_{2,1}&1&1\\
G_2&\chi_{1,6}&.&1 & \qquad\qquad & G_2(a_1),21&\chi_{1,3}''&.&1\\
\end{array}
\]
This could be summarized by the Brauer trees:
\[
\tiny
\xymatrix@C=.4cm{\chi_{1,0} \ar@{-}[r]& \chi_{2,2} \ar@{-}[r]& \chi_{1,6}} \qquad \text{\normalsize and} \qquad
\xymatrix@C=.4cm{\chi_{1,3}' \ar@{-}[r]& \chi_{2,1} \ar@{-}[r]& \chi_{1,3}''}
\]
The corresponding pairs are:
\[
\tiny
\xymatrix@C=.4cm{0 \ar@{-}[r]& \tilde A_1 \ar@{-}[r]& G_2} \qquad \text{\normalsize and} \qquad
\xymatrix@C=.4cm{A_1 \ar@{-}[r]& (G_2(a_1),3) \ar@{-}[r]& (G_2(a_1),21)}
\]
The Springer basic set is $\{\chi_{1,0}, \chi_{2,2}, \chi_{1,3}', \chi_{2,1}\}$. The associated
irreducible Brauer characters correspond to the pairs $0$, $\tilde A_1$, $A_1$ and $(G_2(a_1), 3)$.
The missing pairs are at the end of the trees (on the side of the largest nilpotent orbit).
Note that the modular pair $(G_2(a_1),21)$ is the modular reduction of the cuspidal pair from characteristic zero
coefficients, hence we already knew it should not be involved in the modular Springer correspondence.

\subsection{Type $F_4$}

\subsubsection{Case $\ell = 2$}
We only have the principal block. The image of the modular Springer correspondence is $\{0, A_1, (\tilde A_1, 2),
A_1 + \tilde A_1\}$.
\[
\tiny
\begin{array}{cc|*{6}c}
0&\chi_{1,0}&1&.&.&.\\
A_1&\chi_{2,4}'&.&1&.&.\\
\tilde A_1,2&\chi_{4,1}&.&1&1&.\\
\tilde A_1,11&\chi_{2,4}''&.&.&1&.\\
A_1{+}\tilde A_1&\chi_{9,2}&1&1&1&1\\
A_2,2&\chi_{8,3}'&2&1&.&1\\
A_2,11&\chi_{1,12}'&1&.&.&.\\
\tilde A_2&\chi_{8,3}''&2&.&1&1\\
A_2{+}\tilde A_1&\chi_{4,7}'&.&1&1&.\\
B_2,2&\chi_{9,6}'&1&1&1&1\\
\tilde A_2{+}A_1&\chi_{6,6}''&2&.&.&1\\
B_2,11&\chi_{4,8}&.&.&.&1\\
C_3(a_1),2&\chi_{16,5}&.&2&2&2\\
C_3(a_1),11&\chi_{4,7}''&.&1&1&.\\
F_4(a_3),4&\chi_{12,4}&.&2&2&1\\
F_4(a_3),31&\chi_{9,6}''&1&1&1&1\\
F_4(a_3),22&\chi_{6,6}'&2&.&.&1\\
F_4(a_3),211&\chi_{1,12}''&1&.&.&.\\
C_3&\chi_{8,9}''&2&1&.&1\\
B_3&\chi_{8,9}'&2&.&1&1\\
F_4(a_2),2&\chi_{9,10}&1&1&1&1\\
F_4(a_2),11&\chi_{2,16}'&.&.&1&.\\
F_4(a_1),2&\chi_{4,13}&.&1&1&.\\
F_4(a_1),11&\chi_{2,16}''&.&1&.&.\\
F_4&\chi_{1,24}&1&.&.&.\\
\end{array}
\]

\subsubsection{Case $\ell = 3$}
We have two blocks of defect two:
\[
\tiny
\begin{array}{cc|*{6}c}
0&\chi_{1,0}&1&.&.&.\\
A_1&\chi_{2,4}'&1&1&.&.\\
\tilde A_1,11&\chi_{2,4}''&1&.&1&.\\
A_2,11&\chi_{1,12}'&.&1&.&.\\
B_2,11&\chi_{4,8}&1&1&1&1\\
F_4(a_3),211&\chi_{1,12}''&.&.&1&.\\
F_4(a_2),11&\chi_{2,16}'&.&1&.&1\\
F_4(a_1),11&\chi_{2,16}''&.&.&1&1\\
F_4&\chi_{1,24}&.&.&.&1\\
\end{array}
\]
and
\[
\tiny
\begin{array}{cc|*{6}c}
\tilde A_1,2&\chi_{4,1}&1&.&.&.\\
A_2,2&\chi_{8,3}'&1&1&.&.\\
\tilde A_2&\chi_{8,3}''&1&.&1&.\\
A_2{+}\tilde A_1&\chi_{4,7}'&.&1&.&.\\
C_3(a_1),2&\chi_{16,5}&1&1&1&1\\
C_3(a_1),11&\chi_{4,7}''&.&.&1&.\\
C_3&\chi_{8,9}''&.&.&1&1\\
B_3&\chi_{8,9}'&.&1&.&1\\
F_4(a_1),2&\chi_{4,13}&.&.&.&1\\
\end{array}
\]

There is one block of defect one, described by the following tree:
\[
\tiny
\xymatrix@C=.4cm{\chi_{6,6}'' \ar@{-}[r]&  \chi_{12,4} \ar@{-}[r]& \chi_{6,6}'}
\]
corresponding to the following pairs:
\[
\tiny
\xymatrix@C=.4cm{A_2{+}A_1 \ar@{-}[r]&  (F_4(a_3),4) \ar@{-}[r]& (F_4(a_3),22)}
\]

Finally, the other characters each form a block of defect zero:
\[
\tiny
\begin{array}{*{4}{c}}
\chi_{9,2}&\chi_{9,6}'&\chi_{9,6}''&\chi_{9,10}\\
A_1{+}\tilde A_1&(B_2,2)&(F_4(a_3),31)&(F_4(a_2),2)\\
\end{array}
\]

\subsection{Type $E_6$}

\subsubsection{Case $\ell = 2$}
We have the principal, block of defect $7$:
\[
\tiny
\begin{array}{cc|*{7}c}
0&\chi_{1,0}&1&.&.&.&.\\
A_1&\chi_{6,1}&.&1&.&.&.\\
2A_1&\chi_{20,2}&.&1&1&.&.\\
3A_1&\chi_{15,4}&1&1&.&1&.\\
A_2,2&\chi_{30,3}&2&1&1&1&.\\
A_2,11&\chi_{15,5}&1&.&1&.&.\\
2A_2&\chi_{24,6}&2&.&1&1&.\\
A_2{+}2A_1&\chi_{60,5}&.&1&1&.&1\\
A_3&\chi_{81,6}&1&3&1&1&1\\
2A_2{+}A_1&\chi_{10,9}&2&.&.&1&.\\
A_3{+}A_1&\chi_{60,8}&.&2&.&1&1\\
D_4(a_1),3&\chi_{80,7}&.&4&.&2&1\\
D_4(a_1),21&\chi_{90,8}&2&2&2&1&1\\
D_4(a_1),111&\chi_{20,10}&.&2&.&1&.\\
D_4&\chi_{24,12}&2&.&1&1&.\\
A_4&\chi_{81,10}&1&3&1&1&1\\
A_4{+}A_1&\chi_{60,11}&.&1&1&.&1\\
A_5&\chi_{15,16}&1&1&.&1&.\\
E_6(a_3),2&\chi_{30,15}&2&1&1&1&.\\
E_6(a_3),11&\chi_{15,17}&1&.&1&.&.\\
D_5&\chi_{20,20}&.&1&1&.&.\\
E_6(a_1)&\chi_{6,25}&.&1&.&.&.\\
E_6&\chi_{1,36}&1&.&.&.&.\\
\end{array}
\]
and a block of defect one:
\[
\tiny
\begin{array}{cc|*{3}c}
A_2{+}A_1&\chi_{64,4}&1\\
D_5(a_1)&\chi_{64,13}&1\\
\end{array}
\]

\subsubsection{Case $\ell = 3$}
We have the principal block, of defect four:
\[
\tiny
\begin{array}{cc|*{12}c}
0&\chi_{1,0}&1&.&.&.&.&.&.&.&.&.\\
A_1&\chi_{6,1}&1&1&.&.&.&.&.&.&.&.\\
2A_1&\chi_{20,2}&1&1&1&.&.&.&.&.&.&.\\
3A_1&\chi_{15,4}&.&.&1&1&.&.&.&.&.&.\\
A_2,2&\chi_{30,3}&.&1&.&.&1&.&.&.&.&.\\
A_2,11&\chi_{15,5}&.&1&.&.&.&1&.&.&.&.\\
A_2{+}A_1&\chi_{64,4}&.&1&1&.&1&1&1&.&.&.\\
2A_2&\chi_{24,6}&.&.&1&.&.&.&1&.&.&.\\
A_2{+}2A_1&\chi_{60,5}&.&1&1&1&1&.&1&1&.&.\\
2A_2{+}A_1&\chi_{10,9}&.&1&.&.&.&.&.&1&.&.\\
A_3{+}A_1&\chi_{60,8}&1&1&1&1&.&1&1&1&1&.\\
D_4(a_1),3&\chi_{80,7}&.&1&.&.&1&1&1&1&.&1\\
D_4(a_1),21&\chi_{90,8}&.&.&.&.&1&2&2&.&.&1\\
D_4(a_1),111&\chi_{20,10}&.&.&.&.&.&1&1&.&.&.\\
D_4&\chi_{24,12}&.&.&.&.&.&1&.&.&1&.\\
A_4{+}A_1&\chi_{60,11}&1&1&.&.&.&1&.&1&1&1\\
A_5&\chi_{15,16}&1&.&.&.&.&.&.&.&1&.\\
D_5(a_1)&\chi_{64,13}&.&.&.&.&.&1&1&1&1&1\\
E_6(a_3),2&\chi_{30,15}&.&.&.&.&.&.&.&1&.&1\\
E_6(a_3),11&\chi_{15,17}&.&.&.&.&.&.&1&1&.&.\\
D_5&\chi_{20,20}&.&.&.&1&.&.&.&1&1&.\\
E_6(a_1)&\chi_{6,25}&.&.&.&1&.&.&.&1&.&.\\
E_6&\chi_{1,36}&.&.&.&1&.&.&.&.&.&.\\
\end{array}
\]
and two blocks of defect zero:
\[
\tiny
\begin{array}{cc|c c cc|c}
A_3&\chi_{81,6}&1 & \qquad\text{\normalsize and}\qquad & A_4&\chi_{81,10}&1\\
\end{array}
\]

\subsubsection{Case $\ell = 5$}
We have two blocks of defect one:
\[
\tiny
\begin{array}{cc|cccc c cc|cccc}
0&\chi_{1,0}&1&.&.&. & \qquad\qquad & A_1&\chi_{6,1}&1&.&.&.\\
2A_2&\chi_{24,6}&1&1&.&. & \qquad\qquad & A_2{+}A_1&\chi_{64,4}&1&1&.&.\\
A_4&\chi_{81,10}&.&1&1&. & \text{\normalsize and} & A_3&\chi_{81,6}&.&1&1&.\\
D_5(a_1)&\chi_{64,13}&.&.&1&1 & \qquad\qquad & D_4&\chi_{24,12}&.&.&1&1\\
E_6(a_1)&\chi_{6,25}&.&.&.&1 & \qquad\qquad &  E_6&\chi_{1,36}&.&.&.&1\\
\end{array}
\]
The remaining characters have degree divisible by $5$, hence form
each a defect zero block on their own:
\[
 \tiny
\begin{array}{cc|cc|cc}
\chi_{20,2}&2A_1&\chi_{10,9}&2A_2{+}A_1&\chi_{60,11}&A_4{+}A_1\\
\chi_{15,4}&3A_1&\chi_{60,8}&A_3{+}A_1&\chi_{15,16}&A_5\\
\chi_{30,3}&(A_2,2)&\chi_{80,7}&(D_4(a_1),3)&\chi_{30,15}&(E_6(a_3),2)\\
\chi_{15,5}&(A_2,11)&\chi_{90,8}&(D_4(a_1),21)&\chi_{15,17}&(E_6(a_3),11)\\
\chi_{60,5}&A_2{+}2A_1&\chi_{20,10}&(D_4(a_1),111)&\chi_{20,20}&D_5\\
\end{array}
\]

\subsection{Type $E_7$}

\subsubsection{Case $\ell = 2$}
We have the principal block, of defect 10:
{\tiny
\[
\begin{array}{cc|*{9}c}
0&\chi_{1,0}&1&.&.&.&.&.&.\\
A_1&\chi_{7,1}&1&1&.&.&.&.&.\\
2A_1&\chi_{27,2}&1&2&1&.&.&.&.\\
3A_1''&\chi_{21,3}&1&1&1&.&.&.&.\\
3A_1'&\chi_{35,4}&1&2&1&1&.&.&.\\
A_2,2&\chi_{56,3}&2&3&2&1&.&.&.\\
A_2,11&\chi_{21,6}&1&1&1&.&.&.&.\\
4A_1&\chi_{15,7}&1&1&.&1&.&.&.\\
A_2{+}A_1,2&\chi_{120,4}&2&3&2&1&1&.&.\\
A_2{+}A_1,11&\chi_{105,5}&1&2&2&.&1&.&.\\
A_2{+}2A_1&\chi_{189,5}&3&4&3&1&1&1&.\\
A_3&\chi_{210,6}&4&5&4&1&1&1&.\\
A_2{+}3A_1&\chi_{105,6}&3&3&2&1&.&1&.\\
2A_2&\chi_{168,6}&2&2&3&.&1&1&.\\
(A_3{+}A_1)''&\chi_{189,7}&3&4&3&1&1&1&.\\
2A_2{+}A_1&\chi_{70,9}&2&1&1&.&.&1&.\\
(A_3{+}A_1)'&\chi_{280,8}&2&3&2&1&1&1&1\\
D_4(a_1),3&\chi_{315,7}&3&5&3&2&1&1&1\\
D_4(a_1),111&\chi_{35,13}&1&2&1&1&.&.&.\\
A_3{+}2A_1&\chi_{216,9}&2&3&2&1&.&1&1\\
D_4(a_1),21&\chi_{280,9}&6&6&5&1&1&2&.\\
D_4(a_1){+}A_1,11&\chi_{189,10}&3&4&3&1&1&1&.\\
D_4&\chi_{105,12}&3&3&2&1&.&1&.\\
D_4(a_1){+}A_1,2&\chi_{405,8}&5&7&5&2&1&2&1\\
A_3{+}A_2,2&\chi_{378,9}&4&5&4&2&1&2&1\\
A_3{+}A_2,11&\chi_{84,12}&2&2&1&1&.&1&.\\
A_4,11&\chi_{336,11}&4&6&4&2&1&1&1\\
A_3{+}A_2{+}A_1&\chi_{210,10}&2&3&1&2&.&1&1\\
A_4,2&\chi_{420,10}&6&8&5&3&1&2&1\\
D_4{+}A_1&\chi_{84,15}&2&2&1&1&.&1&.\\
A_5''&\chi_{105,15}&3&3&2&1&.&1&.\\
D_5(a_1),2&\chi_{420,13}&6&8&5&3&1&2&1\\
D_5(a_1),11&\chi_{336,14}&4&6&4&2&1&1&1\\
A_4{+}A_2&\chi_{210,13}&2&3&1&2&.&1&1\\
A_5{+}A_1&\chi_{70,18}&2&1&1&.&.&1&.\\
D_5(a_1){+}A_1&\chi_{378,14}&4&5&4&2&1&2&1\\
A_5'&\chi_{216,16}&2&3&2&1&.&1&1\\
D_6(a_2)&\chi_{280,17}&2&3&2&1&1&1&1\\
E_6(a_3),2&\chi_{405,15}&5&7&5&2&1&2&1\\
E_6(a_3),11&\chi_{189,17}&3&4&3&1&1&1&.\\
E_7(a_5),3&\chi_{315,16}&3&5&3&2&1&1&1\\
E_7(a_5),21&\chi_{280,18}&6&6&5&1&1&2&.\\
E_7(a_5),111&\chi_{35,22}&1&2&1&1&.&.&.\\
D_5&\chi_{189,20}&3&4&3&1&1&1&.\\
A_6&\chi_{105,21}&3&3&2&1&.&1&.\\
D_6(a_1)&\chi_{210,21}&4&5&4&1&1&1&.\\
D_5{+}A_1&\chi_{168,21}&2&2&3&.&1&1&.\\
E_7(a_4),2&\chi_{189,22}&3&4&3&1&1&1&.\\
E_7(a_4),11&\chi_{15,28}&1&1&.&1&.&.&.\\
E_6(a_1),2&\chi_{120,25}&2&3&2&1&1&.&.\\
D_6&\chi_{35,31}&1&2&1&1&.&.&.\\
E_6(a_1),11&\chi_{105,26}&1&2&2&.&1&.&.\\
E_6&\chi_{21,36}&1&1&1&.&.&.&.\\
E_7(a_3),2&\chi_{56,30}&2&3&2&1&.&.&.\\
E_7(a_3),11&\chi_{21,33}&1&1&1&.&.&.&.\\
E_7(a_2)&\chi_{27,37}&1&2&1&.&.&.&.\\
E_7(a_1)&\chi_{7,46}&1&1&.&.&.&.&.\\
E_7&\chi_{1,63}&1&.&.&.&.&.&.\\
\end{array}
\]
}
and a block of defect one:
\[
\tiny
\begin{array}{cc|*{3}c}
A_4{+}A_1,2&\chi_{512,12}&1\\
A_4{+}A_1,11&\chi_{512,11}&1\\
\end{array}
\]

\subsubsection{Case $\ell = 3$}
We have two blocks of defect four:
\[
\tiny
\begin{array}{cc|*{12}c}
0&\chi_{1,0}&1&.&.&.&.&.&.&.&.&.\\
3A_1'&\chi_{35,4}&1&1&.&.&.&.&.&.&.&.\\
A_2,11&\chi_{21,6}&.&.&1&.&.&.&.&.&.&.\\
A_2{+}A_1,2&\chi_{120,4}&1&.&1&1&.&.&.&.&.&.\\
A_3&\chi_{210,6}&.&.&1&1&1&.&.&.&.&.\\
A_2{+}3A_1&\chi_{105,6}&.&.&.&1&.&1&.&.&.&.\\
2A_2&\chi_{168,6}&1&1&.&1&.&.&1&.&.&.\\
(A_3{+}A_1)'&\chi_{280,8}&1&1&.&1&1&1&1&1&.&.\\
D_4&\chi_{105,12}&.&.&.&.&1&.&.&1&.&.\\
A_3{+}A_2,11&\chi_{84,12}&1&1&.&.&.&.&1&1&.&.\\
A_3{+}A_2{+}A_1&\chi_{210,10}&.&.&1&1&.&1&1&.&1&.\\
A_4,2&\chi_{420,10}&.&.&.&1&1&.&1&.&.&1\\
A_4{+}A_1,2&\chi_{512,12}&1&.&1&1&1&1&1&1&1&1\\
D_5(a_1),11&\chi_{336,14}&.&.&.&.&1&.&1&1&.&1\\
A_5{+}A_1&\chi_{70,18}&.&.&1&.&.&.&.&.&1&.\\
E_7(a_5),3&\chi_{315,16}&.&.&1&.&.&.&.&.&2&1\\
E_7(a_5),21&\chi_{280,18}&.&.&.&.&.&.&1&.&1&1\\
E_7(a_5),111&\chi_{35,22}&.&.&.&.&.&.&1&.&.&.\\
E_7(a_4),11&\chi_{15,28}&1&.&.&.&.&.&.&1&.&.\\
E_6(a_1),11&\chi_{105,26}&.&.&.&.&.&1&1&1&1&.\\
E_6&\chi_{21,36}&.&.&.&.&.&1&.&1&.&.\\
E_7(a_3),2&\chi_{56,30}&.&.&.&.&.&1&.&.&1&.\\
E_7(a_1)&\chi_{7,46}&.&.&.&.&.&1&.&.&.&.\\
\end{array}
\]
and
\[
\tiny
\begin{array}{cc|*{12}c}
A_1&\chi_{7,1}&1&.&.&.&.&.&.&.&.&.\\
3A_1''&\chi_{21,3}&1&1&.&.&.&.&.&.&.&.\\
A_2,2&\chi_{56,3}&1&.&1&.&.&.&.&.&.&.\\
4A_1&\chi_{15,7}&.&1&.&1&.&.&.&.&.&.\\
A_2{+}A_1,11&\chi_{105,5}&1&1&1&.&1&.&.&.&.&.\\
2A_2{+}A_1&\chi_{70,9}&.&.&1&.&.&1&.&.&.&.\\
D_4(a_1),3&\chi_{315,7}&.&.&2&.&.&1&1&.&.&.\\
D_4(a_1),111&\chi_{35,13}&.&.&.&.&1&.&.&.&.&.\\
D_4(a_1),21&\chi_{280,9}&.&.&1&.&1&.&1&.&.&.\\
A_4,11&\chi_{336,11}&.&1&.&.&1&.&1&1&.&.\\
D_4{+}A_1&\chi_{84,15}&.&1&.&1&1&.&.&.&1&.\\
A_5''&\chi_{105,15}&.&1&.&.&.&.&.&1&.&.\\
A_4{+}A_1,11&\chi_{512,11}&1&1&1&1&1&1&1&1&.&1\\
D_5(a_1),2&\chi_{420,13}&.&.&.&.&1&.&1&1&.&1\\
A_4{+}A_2&\chi_{210,13}&1&.&1&.&1&1&.&.&.&1\\
D_6(a_2)&\chi_{280,17}&1&1&.&1&1&.&.&1&1&1\\
A_6&\chi_{105,21}&1&.&.&.&.&.&.&.&.&1\\
D_6(a_1)&\chi_{210,21}&.&.&.&.&.&1&.&1&.&1\\
D_5{+}A_1&\chi_{168,21}&.&.&.&1&1&.&.&.&1&1\\
E_6(a_1),2&\chi_{120,25}&.&.&.&1&.&1&.&.&.&1\\
D_6&\chi_{35,31}&.&.&.&1&.&.&.&.&1&.\\
E_7(a_3),11&\chi_{21,33}&.&.&.&.&.&1&.&.&.&.\\
E_7&\chi_{1,63}&.&.&.&1&.&.&.&.&.&.\\
\end{array}
\]
four blocks of defect one:
\[
\tiny
\begin{array}{cc|*{4}c}
2A_1&\chi_{27,2}&1&.\\
A_5'&\chi_{216,16}&1&1\\
D_5&\chi_{189,20}&.&1\\
\end{array}
\]
\[
\tiny
\begin{array}{cc|*{4}c}
(A_3{+}A_1)''&\chi_{189,7}&1&.\\
A_3{+}2A_1&\chi_{216,9}&1&1\\
E_7(a_2)&\chi_{27,37}&.&1\\
\end{array}
\]
\[
\tiny
\begin{array}{cc|*{4}c}
D_4(a_1){+}A_1,11&\chi_{189,10}&1&.\\
D_5(a_1){+}A_1&\chi_{378,14}&1&1\\
E_7(a_4),2&\chi_{189,22}&.&1\\
\end{array}
\]
\[
\tiny
\begin{array}{cc|*{4}c}
A_2{+}2A_1&\chi_{189,5}&1&.\\
A_3{+}A_2,2&\chi_{378,9}&1&1\\
E_6(a_3),11&\chi_{189,17}&.&1\\
\end{array}
\]
and two blocks of defect zero:
\[
\tiny
\begin{array}{cc|c c cc|c}
D_4(a_1){+}A_1,2&\chi_{405,8}&1  & \qquad\text{\normalsize and}\qquad & E_6(a_3),2&\chi_{405,15}&1\\
\end{array}
\]

\subsubsection{Case $\ell = 5$}

There are 6 blocks of defect one, with following Brauer trees:
\[
\tiny
\xymatrix@R=0cm@C=.4cm{
\chi_{1,0}\ar@{-}[r]&\chi_{84,12}\ar@{-}[r]&\chi_{216,16}\ar@{-}[r]&\chi_{189,22}\ar@{-}[r]&\chi_{56,30}\\
\chi_{56,3}\ar@{-}[r]&\chi_{189,5}\ar@{-}[r]&\chi_{216,9}\ar@{-}[r]&\chi_{84,15}\ar@{-}[r]&\chi_{1,63}\\
\chi_{27,2}\ar@{-}[r]&\chi_{168,6}\ar@{-}[r]&\chi_{512,12}\ar@{-}[r]&\chi_{378,14}\ar@{-}[r]&\chi_{7,46}\\
\chi_{7,1}\ar@{-}[r]&\chi_{378,9}\ar@{-}[r]&\chi_{512,11}\ar@{-}[r]&\chi_{168,21}\ar@{-}[r]&\chi_{27,37}\\
\chi_{21,6}\ar@{-}[r]&\chi_{189,10}\ar@{-}[r]&\chi_{336,14}\ar@{-}[r]&\chi_{189,20}\ar@{-}[r]&\chi_{21,36}\\
\chi_{21,3}\ar@{-}[r]&\chi_{189,7}\ar@{-}[r]&\chi_{336,11}\ar@{-}[r]&\chi_{189,17}\ar@{-}[r]&\chi_{21,33}\\
}
\]
and the corresponding pairs are:
\[
\tiny
\xymatrix@R=0cm@C=.2cm{
0\ar@{-}[r]&(A_3{+}A_2,11)\ar@{-}[r]&A_5'\ar@{-}[r]&(E_7(a_4),2)\ar@{-}[r]&(E_7(a_3),2)\\
(A_2,2)\ar@{-}[r]&A_2{+}2A_1\ar@{-}[r]&A_3{+}2A_1\ar@{-}[r]&D_4{+}A_1\ar@{-}[r]&E_7\\
2A_1\ar@{-}[r]&2A_2\ar@{-}[r]&(A_4{+}A_1,2)\ar@{-}[r]&D_5(a_1){+}A_1\ar@{-}[r]&E_7(a_1)\\
A_1\ar@{-}[r]&(A_3{+}A_2,2)\ar@{-}[r]&(A_4{+}A_1,11)\ar@{-}[r]&D_5{+}A_1\ar@{-}[r]&E_7(a_2)\\
(A_2,11)\ar@{-}[r]&(D_4(a_1){+}A_1,11)\ar@{-}[r]&(D_5(a_1),11)\ar@{-}[r]&D_5\ar@{-}[r]&E_6\\
3A_1''\ar@{-}[r]&(A_3{+}A_1)''\ar@{-}[r]&(A_4,11)\ar@{-}[r]&(E_6(a_3),11)\ar@{-}[r]&(E_7(a_3),11)\\
}
\]

Finally, there are 30 blocks of defect zero:
\[
 \tiny
\begin{array}{cc|cc|cc}
\chi_{35,4}&3A_1'&\chi_{280,9}&(D_4(a_1),21)&\chi_{405,15}&(E_6(a_3),2)\\
\chi_{15,7}&4A_1&\chi_{105,12}&D_4&\chi_{315,16}&(E_7(a_5),3)\\
\chi_{120,4}&(A_2{+}A_1,2)&\chi_{405,8}&(D_4(a_1){+}A_1,2)&\chi_{280,18}&(E_7(a_5),21)\\
\chi_{105,5}&(A_2{+}A_1,11)&\chi_{210,10}&A_3{+}A_2{+}A_1&\chi_{35,22}&(E_7(a_5),111)\\
\chi_{210,6}&A_3&\chi_{420,10}&(A_4,2)&\chi_{105,21}&A_6\\
\chi_{105,6}&A_2{+}3A_1&\chi_{105,15}&A_5''&\chi_{210,21}&D_6(a_1)\\
\chi_{70,9}&2A_2{+}A_1&\chi_{420,13}&(D_5(a_1),2)&\chi_{15,28}&(E_7(a_4),11)\\
\chi_{280,8}&(A_3{+}A_1)'&\chi_{210,13}&A_4{+}A_2&\chi_{120,25}&(E_6(a_1),2)\\
\chi_{315,7}&(D_4(a_1),3)&\chi_{70,18}&A_5{+}A_1&\chi_{35,31}&D_6\\
\chi_{35,13}&(D_4(a_1),111)&\chi_{280,17}&D_6(a_2)&\chi_{105,26}&(E_6(a_1),11)\\
\end{array}
\]

So the following pairs are missing:
\[
E_7,\ E_7(a_1),\ E_7(a_2),\ (E_7(a_3),11),\ (E_7(a_3),2) \text{ and } E_6.
\]

\subsubsection{Case $\ell = 7$}

There are two blocks of defect one:
\[
\tiny
\xymatrix@R=.1cm@C=.4cm{
\chi_{1,0}\ar@{-}[r]&\chi_{27,2}\ar@{-}[r]&\chi_{120,4}\ar@{-}[r]&\chi_{405,8}\ar@{-}[r]&\chi_{512,12}\ar@{-}[r]&\chi_{216,16}\ar@{-}[r]&\chi_{15,28}\\
\chi_{15,7}\ar@{-}[r]&\chi_{216,9}\ar@{-}[r]&\chi_{512,11}\ar@{-}[r]&\chi_{405,15}\ar@{-}[r]&\chi_{120,25}\ar@{-}[r]&\chi_{27,37}\ar@{-}[r]&\chi_{1,63}\\
}
\]
and the corresponding pairs are:
\[
\tiny
\xymatrix@R=.1cm@C=.2cm{
0\ar@{-}[r]&2A_1\ar@{-}[r]&(A_2{+}A_1,2)\ar@{-}[r]&(D_4(a_1){+}A_1,2)\ar@{-}[r]&(A_4{+}A_1,2)\ar@{-}[r]&A_5'\ar@{-}[r]&(E_7(a_4),11)\\
4A_1\ar@{-}[r]&A_3{+}2A_1\ar@{-}[r]&(A_4{+}A_1,11)\ar@{-}[r]&(E_6(a_3),2)\ar@{-}[r]&(E_6(a_1),2)\ar@{-}[r]&E_7(a_2)\ar@{-}[r]&E_7\\
}
\]

There are 46 blocks of defect zero:
\[
 \tiny
\begin{array}{cc|cc|cc}
\chi_{7,1}&A_1&\chi_{189,10}&(D_4(a_1){+}A_1,11)&\chi_{315,16}&(E_7(a_5),3)\\
\chi_{21,3}&3A_1''&\chi_{105,12}&D_4&\chi_{280,18}&(E_7(a_5),21)\\
\chi_{35,4}&3A_1'&\chi_{378,9}&(A_3{+}A_2,2)&\chi_{35,22}&(E_7(a_5),111)\\
\chi_{56,3}&(A_2,2)&\chi_{84,12}&(A_3{+}A_2,11)&\chi_{189,20}&D_5\\
\chi_{21,6}&(A_2,11)&\chi_{336,11}&(A_4,11)&\chi_{105,21}&A_6\\
\chi_{105,5}&(A_2{+}A_1,11)&\chi_{210,10}&A_3{+}A_2{+}A_1&\chi_{210,21}&D_6(a_1)\\
\chi_{189,5}&A_2{+}2A_1&\chi_{420,10}&(A_4,2)&\chi_{168,21}&D_5{+}A_1\\
\chi_{210,6}&A_3&\chi_{84,15}&D_4{+}A_1&\chi_{189,22}&(E_7(a_4),2)\\
\chi_{105,6}&A_2{+}3A_1&\chi_{105,15}&A_5''&\chi_{35,31}&D_6\\
\chi_{168,6}&2A_2&\chi_{420,13}&(D_5(a_1),2)&\chi_{105,26}&(E_6(a_1),11)\\
\chi_{189,7}&(A_3{+}A_1)''&\chi_{336,14}&(D_5(a_1),11)&\chi_{21,36}&E_6\\
\chi_{70,9}&2A_2{+}A_1&\chi_{210,13}&A_4{+}A_2&\chi_{56,30}&(E_7(a_3),2)\\
\chi_{280,8}&(A_3{+}A_1)'&\chi_{70,18}&A_5{+}A_1&\chi_{21,33}&(E_7(a_3),11)\\
\chi_{315,7}&(D_4(a_1),3)&\chi_{378,14}&D_5(a_1){+}A_1&\chi_{7,46}&E_7(a_1)\\
\chi_{35,13}&(D_4(a_1),111)&\chi_{280,17}&D_6(a_2)&&\\
\chi_{280,9}&(D_4(a_1),21)&\chi_{189,17}&(E_6(a_3),11)&&\\
\end{array}
\]

The pairs $E_7$ and $(E_7(a_4),11)$ are missing.

\newpage

\subsection{Type $E_8$}

\subsubsection{Case $\ell = 2$}

The principal block has defect 14. To save space, we put on the same
line a character and its twist by the sign character. The left column shows the character of the pair which
appears first in the Springer order.
\[
\tiny
\begin{array}{cc|cc|*{11}c}
0&\chi_{1,0}&E_8&\chi_{1,120}&1&.&.&.&.&.&.&.&.&.&.\\
A_1&\chi_{8,1}&E_8(a_1)&\chi_{8,91}&.&1&.&.&.&.&.&.&.&.&.\\
2A_1&\chi_{35,2}&E_8(a_2)&\chi_{35,74}&1&1&1&.&.&.&.&.&.&.&.\\
3A_1&\chi_{84,4}&E_7&\chi_{84,64}&2&1&1&1&.&.&.&.&.&.&.\\
A_2,2&\chi_{112,3}&E_8(a_3),2&\chi_{112,63}&4&1&2&1&.&.&.&.&.&.&.\\
A_2,11&\chi_{28,8}&E_8(a_3),11&\chi_{28,68}&2&.&1&.&.&.&.&.&.&.&.\\
4A_1&\chi_{50,8}&E_8(b_4),11&\chi_{50,56}&.&1&1&.&1&.&.&.&.&.&.\\
A_2{+}A_1,2&\chi_{210,4}&E_8(a_4),2&\chi_{210,52}&.&1&1&.&1&1&.&.&.&.&.\\
A_2{+}A_1,11&\chi_{160,7}&E_8(a_4),11&\chi_{160,55}&.&.&.&.&.&1&.&.&.&.&.\\
A_2{+}2A_1&\chi_{560,5}&E_8(b_4),2&\chi_{560,47}&2&2&4&.&2&1&1&.&.&.&.\\
A_3&\chi_{567,6}&E_7(a_1)&\chi_{567,46}&3&2&3&1&1&1&1&.&.&.&.\\
A_2{+}3A_1&\chi_{400,7}&D_7&\chi_{400,43}&4&1&2&1&2&1&.&1&.&.&.\\
2A_2,2&\chi_{700,6}&E_8(a_5),2&\chi_{700,42}&6&1&4&1&2&1&1&1&.&.&.\\
2A_2,11&\chi_{300,8}&E_8(a_5),11&\chi_{300,44}&2&.&2&.&.&.&1&.&.&.&.\\
2A_2{+}A_1&\chi_{448,9}&E_6{+}A_1&\chi_{448,39}&6&.&2&1&.&.&1&1&.&.&.\\
A_3{+}A_1&\chi_{1344,8}&E_7(a_2)&\chi_{1344,38}&2&2&4&.&2&1&1&.&1&.&.\\
D_4(a_1),3&\chi_{1400,7}&E_8(b_5),3&\chi_{1400,37}&2&3&4&1&2&1&1&.&1&.&.\\
D_4(a_1),21&\chi_{1008,9}&E_8(b_5),21&\chi_{1008,39}&8&2&6&1&2&1&2&1&.&.&.\\
D_4(a_1),111&\chi_{56,19}&E_8(b_5),111&\chi_{56,49}&.&1&.&1&.&.&.&.&.&.&.\\
2A_2{+}2A_1&\chi_{175,12}&E_8(b_6),21&\chi_{175,36}&5&.&1&1&.&.&.&1&.&.&.\\
D_4&\chi_{525,12}&E_6&\chi_{525,36}&7&1&4&1&1&.&1&1&.&.&.\\
A_3{+}2A_1&\chi_{1050,10}&D_7(a_1),11&\chi_{1050,34}&4&3&4&1&3&1&1&1&.&1&.\\
D_4(a_1){+}A_1,3&\chi_{1400,8}&E_8(a_6),3&\chi_{1400,32}&6&4&7&1&4&1&2&1&.&1&.\\
D_4(a_1){+}A_1,21&\chi_{1575,10}&E_8(a_6),21&\chi_{1575,34}&7&3&5&2&2&1&1&1&1&.&.\\
D_4(a_1){+}A_1,111&\chi_{350,14}&E_8(a_6),111&\chi_{350,38}&2&1&3&.&1&.&1&.&.&.&.\\
A_3{+}A_2,2&\chi_{3240,9}&D_7(a_1),2&\chi_{3240,31}&10&7&10&3&6&2&3&2&1&2&.\\
A_3{+}A_2,11&\chi_{972,12}&D_6&\chi_{972,32}&2&2&2&1&2&1&1&1&.&1&.\\
A_4,2&\chi_{2268,10}&E_7(a_3),2&\chi_{2268,30}&8&5&8&2&4&1&2&1&1&1&.\\
A_4,11&\chi_{1296,13}&E_7(a_3),11&\chi_{1296,33}&6&3&6&1&2&.&1&.&1&.&.\\
A_3{+}A_2{+}A_1&\chi_{1400,11}&A_7&\chi_{1400,29}&2&5&4&1&4&1&1&1&.&2&.\\
D_4(a_1){+}A_2,2&\chi_{2240,10}&E_8(b_6),3&\chi_{2240,28}&2&6&4&2&4&1&1&1&1&2&.\\
D_4(a_1){+}A_2,11&\chi_{840,13}&E_8(b_6),111&\chi_{840,31}&.&1&.&1&.&.&.&.&1&.&.\\
D_4{+}A_1&\chi_{700,16}&E_7(a_4),11&\chi_{700,28}&2&2&1&1&2&1&.&1&.&1&.\\
2A_3&\chi_{840,14}&D_5{+}A_2,11&\chi_{840,26}&4&2&3&1&2&.&1&1&.&1&.\\
D_5(a_1),2&\chi_{2800,13}&E_6(a_1),2&\chi_{2800,25}&6&7&6&3&6&3&1&2&1&2&.\\
D_5(a_1),11&\chi_{2100,16}&E_6(a_1),11&\chi_{2100,28}&4&5&5&2&4&2&1&1&1&1&.\\
A_4{+}2A_1,2&\chi_{4200,12}&D_7(a_2),2&\chi_{4200,24}&14&6&11&3&6&1&2&3&1&1&1\\
A_4{+}2A_1,11&\chi_{3360,13}&D_7(a_2),11&\chi_{3360,25}&10&4&8&2&4&1&1&2&1&.&1\\
A_4{+}A_2&\chi_{4536,13}&D_5{+}A_2,2&\chi_{4536,23}&12&7&12&2&8&1&2&3&1&2&1\\
A_5&\chi_{3200,16}&D_5{+}A_1&\chi_{3200,22}&10&4&8&2&4&.&1&2&1&.&1\\
A_4{+}A_2{+}A_1&\chi_{2835,14}&A_6{+}A_1&\chi_{2835,22}&7&4&7&1&5&1&1&2&.&1&1\\
D_5(a_1){+}A_1&\chi_{6075,14}&E_7(a_4),2&\chi_{6075,22}&17&11&17&4&11&3&4&4&1&3&1\\
D_4{+}A_2,2&\chi_{4200,15}&A_6&\chi_{4200,21}&18&7&14&3&8&1&3&4&.&2&1\\
D_4{+}A_2,11&\chi_{168,24}&\multicolumn{2}{c|}{\text{sign-invariant}}&4&.&2&.&1&.&.&1&.&.&.\\
E_6(a_3),2&\chi_{5600,15}&D_6(a_1),2&\chi_{5600,21}&20&10&18&4&10&2&4&4&1&2&1\\
E_6(a_3),11&\chi_{2400,17}&D_6(a_1),11&\chi_{2400,23}&10&6&10&2&6&2&3&2&.&2&.\\
D_5&\chi_{2100,20}&\multicolumn{2}{c|}{\text{sign-invariant}}&8&6&8&2&6&2&2&2&.&2&.\\
A_4{+}A_3&\chi_{420,20}&\multicolumn{2}{c|}{\text{sign-invariant}}&.&2&2&.&2&.&.&.&.&1&.\\
A_5{+}A_1&\chi_{2016,19}&\multicolumn{2}{c|}{\text{sign-invariant}}&8&2&4&2&2&.&.&2&.&.&1\\
D_5(a_1){+}A_2&\chi_{1344,19}&\multicolumn{2}{c|}{\text{sign-invariant}}&.&4&.&2&4&2&.&2&.&2&.\\
E_6(a_3){+}A_1,2&\chi_{3150,18}&\multicolumn{2}{c|}{\text{sign-invariant}}&14&4&10&2&5&.&2&3&.&1&1\\
E_6(a_3){+}A_1,11&\chi_{1134,20}&\multicolumn{2}{c|}{\text{sign-invariant}}&6&2&6&.&3&.&2&1&.&1&.\\
D_6(a_2),2&\chi_{4200,18}&\multicolumn{2}{c|}{\text{sign-invariant}}&8&8&10&2&9&2&2&3&.&3&1\\
D_6(a_2),11&\chi_{2688,20}&\multicolumn{2}{c|}{\text{sign-invariant}}&4&4&8&.&4&.&2&.&.&1&1\\
E_7(a_5),3&\chi_{7168,17}&\multicolumn{2}{c|}{\text{sign-invariant}}&8&12&16&2&14&4&4&3&.&4&2\\
E_7(a_5),21&\chi_{5600,19}&\multicolumn{2}{c|}{\text{sign-invariant}}&12&10&12&4&8&2&2&3&2&2&1\\
E_7(a_5),111&\chi_{448,25}&\multicolumn{2}{c|}{\text{sign-invariant}}&.&.&.&.&2&2&.&1&.&.&.\\
E_8(a_7),5&\chi_{4480,16}&\multicolumn{2}{c|}{\text{sign-invariant}}&4&8&8&2&10&4&2&3&.&3&1\\
E_8(a_7),41&\chi_{5670,18}&\multicolumn{2}{c|}{\text{sign-invariant}}&14&10&14&4&9&2&2&3&2&2&1\\
E_8(a_7),311&\chi_{1680,22}&\multicolumn{2}{c|}{\text{sign-invariant}}&8&4&6&2&4&2&2&2&.&1&.\\
E_8(a_7),32&\chi_{4536,18}&\multicolumn{2}{c|}{\text{sign-invariant}}&16&8&16&2&9&2&4&3&.&2&1\\
E_8(a_7),221&\chi_{1400,20}&\multicolumn{2}{c|}{\text{sign-invariant}}&12&4&8&2&3&.&2&2&.&1&.\\
E_8(a_7),2111&\chi_{70,32}&\multicolumn{2}{c|}{\text{sign-invariant}}&2&.&2&.&1&.&.&.&.&.&.\\
\end{array}
\]
The other block has defect two:
\[
\tiny
\begin{array}{cc|*{3}c}
A_4{+}A_1,2&\chi_{4096,12}&1\\
A_4{+}A_1,11&\chi_{4096,11}&1\\
E_6(a_1){+}A_1,2&\chi_{4096,27}&1\\
E_6(a_1){+}A_1,11&\chi_{4096,26}&1\\
\end{array}
\]
So the image of the $2$-modular Springer correspondence is:
{\tiny
\[
\{0, A_1, 2A_1, 3A_1, 4A_1, (A_2+A_1, 2), A_2+2A_1, A_2+3A_1, A_3 + A_1, A_3+2A_1, (A_4+A_1,2), (A_4+2A_1,2)\}.
\]
}

\subsubsection{Case $\ell = 3$}

The principal block has defect five:
\[
\tiny
\begin{array}{cc|*{22}c}
0&\chi_{1,0}&1&.&.&.&.&.&.&.&.&.&.&.&.&.&.&.&.&.&.&.\\
2A_1&\chi_{35,2}&.&1&.&.&.&.&.&.&.&.&.&.&.&.&.&.&.&.&.&.\\
3A_1&\chi_{84,4}&1&1&1&.&.&.&.&.&.&.&.&.&.&.&.&.&.&.&.&.\\
A_2,11&\chi_{28,8}&.&.&.&1&.&.&.&.&.&.&.&.&.&.&.&.&.&.&.&.\\
4A_1&\chi_{50,8}&1&.&1&.&1&.&.&.&.&.&.&.&.&.&.&.&.&.&.&.\\
A_2{+}A_1,2&\chi_{210,4}&.&1&.&1&.&1&.&.&.&.&.&.&.&.&.&.&.&.&.&.\\
2A_2,2&\chi_{700,6}&.&1&.&.&.&1&1&.&.&.&.&.&.&.&.&.&.&.&.&.\\
2A_2,11&\chi_{300,8}&.&1&1&.&.&1&.&1&.&.&.&.&.&.&.&.&.&.&.&.\\
A_3{+}A_1&\chi_{1344,8}&1&1&1&1&.&1&1&1&1&.&.&.&.&.&.&.&.&.&.&.\\
2A_2{+}2A_1&\chi_{175,12}&.&.&.&.&.&1&.&.&.&1&.&.&.&.&.&.&.&.&.&.\\
D_4&\chi_{525,12}&.&.&.&1&.&.&.&.&1&.&.&.&.&.&.&.&.&.&.&.\\
A_3{+}2A_1&\chi_{1050,10}&.&.&.&.&.&.&1&.&1&.&1&.&.&.&.&.&.&.&.&.\\
D_4(a_1){+}A_1,3&\chi_{1400,8}&.&.&.&1&.&1&.&.&.&1&.&1&.&.&.&.&.&.&.&.\\
D_4(a_1){+}A_1,21&\chi_{1575,10}&.&.&.&2&.&.&.&.&.&.&.&1&1&.&.&.&.&.&.&.\\
D_4(a_1){+}A_1,111&\chi_{350,14}&.&.&.&1&.&.&.&.&.&.&.&.&1&.&.&.&.&.&.&.\\
D_4(a_1){+}A_2,2&\chi_{2240,10}&.&.&.&.&.&1&1&.&.&2&.&1&.&1&.&.&.&.&.&.\\
D_4{+}A_1&\chi_{700,16}&1&.&1&.&1&.&.&1&1&.&1&.&.&.&1&.&.&.&.&.\\
A_4{+}A_1,2&\chi_{4096,12}&1&.&.&1&.&1&1&.&1&1&.&1&1&1&.&1&.&.&.&.\\
2A_3&\chi_{840,14}&.&.&.&.&.&.&1&.&.&.&.&.&.&1&.&.&.&.&.&.\\
D_5(a_1),11&\chi_{2100,16}&.&.&.&1&.&1&.&1&1&.&.&.&1&.&.&1&.&.&.&.\\
A_4{+}2A_1,2&\chi_{4200,12}&.&.&.&1&.&2&1&1&1&2&1&1&.&1&.&1&1&.&.&.\\
A_5&\chi_{3200,16}&1&1&1&.&1&1&1&1&1&1&.&.&.&1&.&1&.&1&.&.\\
D_4{+}A_2,11&\chi_{168,24}&1&.&1&.&1&.&.&1&.&.&.&.&.&.&1&.&.&.&.&.\\
D_5&\chi_{2100,20}&.&.&.&.&.&.&.&1&1&.&.&.&.&.&.&1&.&1&.&.\\
A_4{+}A_3&\chi_{420,20}&.&.&.&1&.&1&.&1&.&1&.&.&.&.&.&.&1&.&.&.\\
E_6(a_3){+}A_1,2&\chi_{3150,18}&.&.&.&2&.&.&.&.&.&2&.&1&1&1&.&.&.&.&1&.\\
D_6(a_2),2&\chi_{4200,18}&.&1&.&1&.&1&1&1&1&1&1&.&1&1&.&1&1&1&.&1\\
D_6(a_2),11&\chi_{2688,20}&1&1&1&1&1&1&.&2&1&1&1&.&.&.&1&1&1&1&.&.\\
E_8(a_7),5&\chi_{4480,16}&.&.&.&2&.&1&.&.&.&2&.&1&1&1&.&1&1&.&1&.\\
E_8(a_7),311&\chi_{1680,22}&.&.&.&.&.&.&.&.&.&.&.&.&1&1&.&1&.&.&.&.\\
D_5{+}A_1&\chi_{3200,22}&1&.&.&1&1&.&.&1&1&.&1&.&1&.&1&1&1&1&.&1\\
E_8(a_7),221&\chi_{1400,20}&.&.&.&.&.&1&.&1&.&.&.&.&.&.&.&1&1&.&.&.\\
E_8(a_7),2111&\chi_{70,32}&.&.&.&.&.&.&.&1&.&.&.&.&.&.&.&.&.&.&.&.\\
E_7(a_4),11&\chi_{700,28}&1&1&1&.&1&.&.&1&.&.&.&.&.&.&1&.&.&1&.&.\\
E_6(a_1),11&\chi_{2100,28}&.&.&.&.&.&.&.&1&.&1&.&.&.&1&.&1&1&1&.&.\\
D_5{+}A_2,11&\chi_{840,26}&.&.&.&.&.&.&.&.&.&.&.&.&1&.&.&.&.&.&.&1\\
E_6&\chi_{525,36}&.&.&.&.&.&.&.&.&.&1&.&.&.&.&.&.&.&1&.&.\\
D_7(a_2),2&\chi_{4200,24}&.&1&.&2&.&1&.&1&.&1&.&.&1&.&.&1&2&1&1&1\\
E_6(a_1){+}A_1,11&\chi_{4096,26}&.&.&.&1&1&.&.&.&.&1&.&.&1&1&.&1&1&1&1&1\\
E_8(b_6),3&\chi_{2240,28}&.&.&.&2&.&.&.&.&.&.&.&.&1&.&.&.&1&.&1&1\\
E_8(b_6),21&\chi_{175,36}&.&.&.&1&.&.&.&.&.&.&.&.&.&.&.&.&1&.&.&.\\
D_7(a_1),11&\chi_{1050,34}&.&1&.&.&.&.&.&.&.&.&.&.&.&.&.&.&.&1&.&1\\
E_8(a_6),21&\chi_{1575,34}&.&.&.&.&.&.&.&.&.&2&.&.&.&1&.&.&.&.&1&.\\
E_7(a_2)&\chi_{1344,38}&.&.&.&.&1&.&.&1&.&1&1&.&.&.&1&.&1&1&.&1\\
E_8(a_6),3&\chi_{1400,32}&.&.&.&1&.&.&.&.&.&1&.&.&.&.&.&.&1&.&1&.\\
E_8(a_6),111&\chi_{350,38}&.&.&.&.&.&.&.&.&.&1&.&.&.&1&.&.&.&.&.&.\\
E_8(a_5),2&\chi_{700,42}&.&.&.&.&.&.&.&.&.&.&1&.&.&.&.&.&1&.&.&1\\
E_8(a_5),11&\chi_{300,44}&.&.&.&.&.&.&.&1&.&.&1&.&.&.&1&.&1&.&.&.\\
E_8(b_4),11&\chi_{50,56}&1&.&.&.&1&.&.&.&.&.&.&.&.&.&1&.&.&.&.&.\\
E_7&\chi_{84,64}&.&.&.&.&1&.&.&.&.&.&1&.&.&.&1&.&.&.&.&.\\
E_8(a_4),2&\chi_{210,52}&.&.&.&.&.&.&.&.&.&1&1&.&.&.&.&.&1&.&.&.\\
E_8(a_3),11&\chi_{28,68}&.&.&.&.&.&.&.&.&.&1&.&.&.&.&.&.&.&.&.&.\\
E_8(a_2)&\chi_{35,74}&.&.&.&.&.&.&.&.&.&.&1&.&.&.&.&.&.&.&.&.\\
E_8&\chi_{1,120}&.&.&.&.&1&.&.&.&.&.&.&.&.&.&.&.&.&.&.&.\\
\end{array}
\]
There are five blocks of defect one, with the following Brauer trees and corresponding pairs:
\[
\tiny
\xymatrix@R=.1cm@C=.4cm{
\chi_{567,6}\ar@{-}[r]&\chi_{2835,22}\ar@{-}[r]&\chi_{2268,30}&\qquad&A_3\ar@{-}[r]&A_6{+}A_1\ar@{-}[r]&(E_7(a_3),2)\\
\chi_{2268,10}\ar@{-}[r]&\chi_{2835,14}\ar@{-}[r]&\chi_{567,46}&\qquad&(A_4,2)\ar@{-}[r]&A_4{+}A_2{+}A_1\ar@{-}[r]&E_7(a_1)\\
\chi_{1134,20}\ar@{-}[r]&\chi_{5670,18}\ar@{-}[r]&\chi_{4536,18}&\qquad&(E_6(a_3){+}A_1,11)\ar@{-}[r]&(E_8(a_7),41)\ar@{-}[r]&(E_8(a_7),32)\\
\chi_{1296,13}\ar@{-}[r]&\chi_{4536,23}\ar@{-}[r]&\chi_{3240,31}&\qquad&(A_4,11)\ar@{-}[r]&(D_5{+}A_2,2)\ar@{-}[r]&(D_7(a_1),2)\\
\chi_{3240,9}\ar@{-}[r]&\chi_{4536,13}\ar@{-}[r]&\chi_{1296,33}&\qquad&(A_3{+}A_2,2)\ar@{-}[r]&A_4{+}A_2\ar@{-}[r]&(E_7(a_3),11)\\
}
\]
Four characters form blocks of defect zero:
\[
\tiny
\begin{array}{*{4}c}
\chi_{972,12}&\chi_{6075,14}&\chi_{6075,22}&\chi_{972,32}\\
(A_3{+}A_2,11)&D_5(a_1){+}A_1&(E_7(a_4),2)&D_6\\
\end{array}
\]

Finally, there is another block of full defect:
\[
\tiny
\begin{array}{cc|*{15}c}
A_1&\chi_{8,1}&1&.&.&.&.&.&.&.&.&.&.&.&.\\
A_2,2&\chi_{112,3}&1&1&.&.&.&.&.&.&.&.&.&.&.\\
A_2{+}A_1,11&\chi_{160,7}&.&1&1&.&.&.&.&.&.&.&.&.&.\\
A_2{+}2A_1&\chi_{560,5}&2&1&1&1&.&.&.&.&.&.&.&.&.\\
A_2{+}3A_1&\chi_{400,7}&1&.&.&1&1&.&.&.&.&.&.&.&.\\
2A_2{+}A_1&\chi_{448,9}&1&.&.&1&.&1&.&.&.&.&.&.&.\\
D_4(a_1),3&\chi_{1400,7}&1&1&1&1&.&1&1&.&.&.&.&.&.\\
D_4(a_1),21&\chi_{1008,9}&.&1&2&.&.&.&1&.&.&.&.&.&.\\
D_4(a_1),111&\chi_{56,19}&.&.&1&.&.&.&.&.&.&.&.&.&.\\
A_3{+}A_2{+}A_1&\chi_{1400,11}&.&.&.&1&1&2&1&1&.&.&.&.&.\\
D_4(a_1){+}A_2,11&\chi_{840,13}&1&.&.&1&.&.&.&.&1&.&.&.&.\\
A_4{+}A_1,11&\chi_{4096,11}&2&.&1&2&1&2&1&.&1&1&.&.&.\\
D_5(a_1),2&\chi_{2800,13}&.&.&1&.&.&1&1&.&.&1&.&.&.\\
A_4{+}2A_1,11&\chi_{3360,13}&1&.&1&1&1&2&1&1&.&1&.&.&.\\
D_4{+}A_2,2&\chi_{4200,15}&1&.&1&1&2&2&1&1&1&1&1&.&.\\
E_6(a_3),2&\chi_{5600,15}&1&.&1&1&1&2&1&.&1&1&.&1&.\\
E_6(a_3),11&\chi_{2400,17}&.&.&1&.&.&.&.&.&1&1&.&.&.\\
A_5{+}A_1&\chi_{2016,19}&.&1&2&.&.&2&1&1&.&.&.&.&1\\
D_5(a_1){+}A_2&\chi_{1344,19}&1&.&1&1&1&1&.&.&1&.&1&.&.\\
E_7(a_5),3&\chi_{7168,17}&2&1&3&1&2&3&1&1&1&1&1&1&1\\
E_7(a_5),21&\chi_{5600,19}&2&.&1&1&2&1&.&.&2&1&1&1&.\\
E_7(a_5),111&\chi_{448,25}&.&.&.&.&.&.&.&.&1&.&.&.&.\\
D_6(a_1),2&\chi_{5600,21}&1&.&2&.&1&1&.&.&1&1&1&1&1\\
A_6&\chi_{4200,21}&2&1&2&1&1&1&.&.&1&.&1&1&1\\
D_6(a_1),11&\chi_{2400,23}&.&.&.&.&.&1&.&.&1&.&.&1&.\\
E_6(a_1),2&\chi_{2800,25}&.&.&1&.&.&1&.&.&.&.&.&1&1\\
D_7(a_2),11&\chi_{3360,25}&1&1&2&.&1&1&.&.&.&.&1&1&1\\
E_6(a_1){+}A_1,2&\chi_{4096,27}&1&.&2&.&2&1&.&.&1&.&2&1&1\\
A_7&\chi_{1400,29}&1&1&2&.&.&.&.&.&.&.&1&.&1\\
E_8(b_6),111&\chi_{840,31}&.&.&.&.&1&.&.&.&1&.&1&.&.\\
E_6{+}A_1&\chi_{448,39}&.&.&1&.&1&.&.&.&.&.&1&.&.\\
E_8(b_5),3&\chi_{1400,37}&.&.&1&.&1&1&.&1&.&.&1&.&1\\
E_8(b_5),21&\chi_{1008,39}&.&.&.&.&.&2&.&1&.&.&.&.&1\\
E_8(b_5),111&\chi_{56,49}&.&.&.&.&.&1&.&.&.&.&.&.&.\\
D_7&\chi_{400,43}&1&.&.&.&1&.&.&.&.&.&1&.&.\\
E_8(b_4),2&\chi_{560,47}&.&.&.&.&2&1&.&1&.&.&1&.&.\\
E_8(a_4),11&\chi_{160,55}&.&.&.&.&.&1&.&1&.&.&.&.&.\\
E_8(a_3),2&\chi_{112,63}&.&.&.&.&1&.&.&1&.&.&.&.&.\\
E_8(a_1)&\chi_{8,91}&.&.&.&.&1&.&.&.&.&.&.&.&.\\
\end{array}
\]

\subsubsection{Case $\ell = 5$}

There are 5 blocks of defect one, with the following Brauer trees:
\[
\tiny
\xymatrix@R=.1cm@C=.4cm{
\chi_{35,2}\ar@{-}[r]&\chi_{840,14}\ar@{-}[r]&\chi_{2835,22}\ar@{-}[r]&\chi_{2240,28}\ar@{-}[r]&\chi_{210,52}\\
\chi_{210,4}\ar@{-}[r]&\chi_{2240,10}\ar@{-}[r]&\chi_{2835,14}\ar@{-}[r]&\chi_{840,26}\ar@{-}[r]&\chi_{35,74}\\
\chi_{420,20}\ar@{-}[r]&\chi_{4480,16}\ar@{-}[r]&\chi_{5670,18}\ar@{-}[r]&\chi_{1680,22}\ar@{-}[r]&\chi_{70,32}\\
\chi_{160,7}\ar@{-}[r]&\chi_{840,13}\ar@{-}[r]&\chi_{3360,25}\ar@{-}[r]&\chi_{3240,31}\ar@{-}[r]&\chi_{560,47}\\
\chi_{560,5}\ar@{-}[r]&\chi_{3240,9}\ar@{-}[r]&\chi_{3360,13}\ar@{-}[r]&\chi_{840,31}\ar@{-}[r]&\chi_{160,55}\\
}
\]
and corresponding pairs
\[
\tiny
\xymatrix@R=.1cm@C=.4cm{
2A_1\ar@{-}[r]&2A_3\ar@{-}[r]&A_6{+}A_1\ar@{-}[r]&(E_8(b_6),3)\ar@{-}[r]&(E_8(a_4),2)\\
(A_2{+}A_1,2)\ar@{-}[r]&(D_4(a_1){+}A_2,2)\ar@{-}[r]&A_4{+}A_2{+}A_1\ar@{-}[r]&(D_5{+}A_2,11)\ar@{-}[r]&E_8(a_2)\\
A_4{+}A_3\ar@{-}[r]&(E_8(a_7),5)\ar@{-}[r]&(E_8(a_7),41)\ar@{-}[r]&(E_8(a_7),311)\ar@{-}[r]&(E_8(a_7),2111)\\
(A_2{+}A_1,11)\ar@{-}[r]&(D_4(a_1){+}A_2,11)\ar@{-}[r]&(D_7(a_2),11)\ar@{-}[r]&(D_7(a_1),2)\ar@{-}[r]&(E_8(b_4),2)\\
A_2{+}2A_1\ar@{-}[r]&(A_3{+}A_2,2)\ar@{-}[r]&(A_4{+}2A_1,11)\ar@{-}[r]&(E_8(b_6),111)\ar@{-}[r]&(E_8(a_4),11)\\
}
\]
The other 47 characters have defect zero:
\[
 \tiny
\begin{array}{*{47}c}
\chi_{50,8}&\chi_{400,7}&\chi_{700,6}&\chi_{300,8}&\chi_{1400,7}&\chi_{175,12}&\chi_{525,12}&\chi_{1050,10}&\chi_{1400,8}\\
4A_1&A_2{+}3A_1&(2A_2,2)&(2A_2,11)&(D_4(a_1),3)&2A_2{+}2A_1&D_4&A_3{+}2A_1&(D_4(a_1){+}A_1,3)\\
\end{array}
\]
\[
\tiny
\begin{array}{*{47}c}
\chi_{1575,10}&\chi_{350,14}&\chi_{1400,11}&\chi_{700,16}&\chi_{2800,13}&\chi_{2100,16}&\chi_{4200,12}\\
(D_4(a_1){+}A_1,21)&(D_4(a_1){+}A_1,111)&A_3{+}A_2{+}A_1&D_4{+}A_1&(D_5(a_1),2)&(D_5(a_1),11)&(A_4{+}2A_1,2)
\end{array}
\]
\[
\tiny
\begin{array}{*{47}c}
\chi_{3200,16}&\chi_{6075,14}&\chi_{4200,15}&\chi_{5600,15}&\chi_{2400,17}&\chi_{2100,20}&\chi_{3150,18}&\chi_{4200,18}\\
A_5&D_5(a_1){+}A_1&(D_4{+}A_2,2)&(E_6(a_3),2)&(E_6(a_3),11)&D_5&(E_6(a_3){+}A_1,2)&(D_6(a_2),2)\\
\end{array}
\]
\[
\tiny
\begin{array}{*{47}c}
\chi_{5600,19}&\chi_{3200,22}&\chi_{1400,20}&\chi_{5600,21}&\chi_{4200,21}&\chi_{2400,23}&\chi_{6075,22}&\chi_{700,28}\\
(E_7(a_5),21)&D_5{+}A_1&(E_8(a_7),221)&(D_6(a_1),2)&A_6&(D_6(a_1),11)&(E_7(a_4),2)&(E_7(a_4),11)\\
\end{array}
\]
\[
\tiny
\begin{array}{*{47}c}
\chi_{2800,25}&\chi_{2100,28}&\chi_{525,36}&\chi_{4200,24}&\chi_{1400,29}&\chi_{175,36}&\chi_{1050,34}&\chi_{1575,34}\\
(E_6(a_1),2)&(E_6(a_1),11)&E_6&(D_7(a_2),2)&A_7&(E_8(b_6),21)&(D_7(a_1),11)&(E_8(a_6),21)\\
\end{array}
\]
\[
\tiny
\begin{array}{*{47}c}
\chi_{1400,32}&\chi_{350,38}&\chi_{1400,37}&\chi_{400,43}&\chi_{700,42}&\chi_{300,44}&\chi_{50,56}\\
(E_8(a_6),3)&(E_8(a_6),111)&(E_8(b_5),3)&D_7&(E_8(a_5),2)&(E_8(a_5),11)&(E_8(b_4),11)\\
\end{array}
\]

\subsubsection{Case $\ell = 7$}
There are four blocks of defect one, with the following Brauer trees:
\[
\tiny
\xymatrix@R=.1cm@C=.4cm{
\chi_{1,0}\ar@{-}[r]&\chi_{300,8}\ar@{-}[r]&\chi_{4096,12}\ar@{-}[r]&\chi_{6075,14}\ar@{-}[r]&\chi_{3200,22}\ar@{-}[r]&\chi_{972,32}\ar@{-}[r]&\chi_{50,56}\\
\chi_{50,8}\ar@{-}[r]&\chi_{972,12}\ar@{-}[r]&\chi_{3200,16}\ar@{-}[r]&\chi_{6075,22}\ar@{-}[r]&\chi_{4096,26}\ar@{-}[r]&\chi_{300,44}\ar@{-}[r]&\chi_{1,120}\\
\chi_{8,1}\ar@{-}[r]&\chi_{160,7}\ar@{-}[r]&\chi_{1296,13}\ar@{-}[r]&\chi_{2400,17}\ar@{-}[r]&\chi_{4096,27}\ar@{-}[r]&\chi_{3240,31}\ar@{-}[r]&\chi_{400,43}\\
\chi_{400,7}\ar@{-}[r]&\chi_{3240,9}\ar@{-}[r]&\chi_{4096,11}\ar@{-}[r]&\chi_{2400,23}\ar@{-}[r]&\chi_{1296,33}\ar@{-}[r]&\chi_{160,55}\ar@{-}[r]&\chi_{8,91}\\
}
\]
and corresponding pairs
\[
\tiny
\xymatrix@R=.1cm@C=.2cm{
0\ar@{-}[r]&(2A_2,11)\ar@{-}[r]&(A_4{+}A_1,2)\ar@{-}[r]&D_5(a_1){+}A_1\ar@{-}[r]&D_5{+}A_1\ar@{-}[r]&D_6\ar@{-}[r]&(E_8(b_4),11)\\
4A_1\ar@{-}[r]&(A_3{+}A_2,11)\ar@{-}[r]&A_5\ar@{-}[r]&(E_7(a_4),2)\ar@{-}[r]&(E_6(a_1){+}A_1,11)\ar@{-}[r]&(E_8(a_5),11)\ar@{-}[r]&E_8\\
A_1\ar@{-}[r]&(A_2{+}A_1,11)\ar@{-}[r]&(A_4,11)\ar@{-}[r]&(E_6(a_3),11)\ar@{-}[r]&(E_6(a_1){+}A_1,2)\ar@{-}[r]&(D_7(a_1),2)\ar@{-}[r]&D_7\\
A_2{+}3A_1\ar@{-}[r]&(A_3{+}A_2,2)\ar@{-}[r]&(A_4{+}A_1,11)\ar@{-}[r]&(D_6(a_1),11)\ar@{-}[r]&(E_7(a_3),11)\ar@{-}[r]&(E_8(a_4),11)\ar@{-}[r]&E_8(a_1)\\
}
\]
There are 84 blocks of defect zero:
\[
 \tiny
\begin{array}{cc|cc|cc}
\chi_{35,2}&2A_1&\chi_{3360,13}&(A_4{+}2A_1,11)&\chi_{2100,28}&(E_6(a_1),11)\\
\chi_{84,4}&3A_1&\chi_{4536,13}&A_4{+}A_2&\chi_{4536,23}&(D_5{+}A_2,2)\\
\chi_{112,3}&(A_2,2)&\chi_{2835,14}&A_4{+}A_2{+}A_1&\chi_{840,26}&(D_5{+}A_2,11)\\
\chi_{28,8}&(A_2,11)&\chi_{4200,15}&(D_4{+}A_2,2)&\chi_{525,36}&E_6\\
\chi_{210,4}&(A_2{+}A_1,2)&\chi_{168,24}&(D_4{+}A_2,11)&\chi_{4200,24}&(D_7(a_2),2)\\
\chi_{560,5}&A_2{+}2A_1&\chi_{5600,15}&(E_6(a_3),2)&\chi_{3360,25}&(D_7(a_2),11)\\
\chi_{567,6}&A_3&\chi_{2100,20}&D_5&\chi_{1400,29}&A_7\\
\chi_{700,6}&(2A_2,2)&\chi_{420,20}&A_4{+}A_3&\chi_{2268,30}&(E_7(a_3),2)\\
\chi_{448,9}&2A_2{+}A_1&\chi_{2016,19}&A_5{+}A_1&\chi_{2240,28}&(E_8(b_6),3)\\
\chi_{1344,8}&A_3{+}A_1&\chi_{1344,19}&D_5(a_1){+}A_2&\chi_{175,36}&(E_8(b_6),21)\\
\chi_{1400,7}&(D_4(a_1),3)&\chi_{3150,18}&(E_6(a_3){+}A_1,2)&\chi_{840,31}&(E_8(b_6),111)\\
\chi_{1008,9}&(D_4(a_1),21)&\chi_{1134,20}&(E_6(a_3){+}A_1,11)&\chi_{448,39}&E_6{+}A_1\\
\chi_{56,19}&(D_4(a_1),111)&\chi_{4200,18}&(D_6(a_2),2)&\chi_{1050,34}&(D_7(a_1),11)\\
\chi_{175,12}&2A_2{+}2A_1&\chi_{2688,20}&(D_6(a_2),11)&\chi_{1575,34}&(E_8(a_6),21)\\
\chi_{525,12}&D_4&\chi_{7168,17}&(E_7(a_5),3)&\chi_{1344,38}&E_7(a_2)\\
\chi_{1050,10}&A_3{+}2A_1&\chi_{5600,19}&(E_7(a_5),21)&\chi_{1400,32}&(E_8(a_6),3)\\
\chi_{1400,8}&(D_4(a_1){+}A_1,3)&\chi_{448,25}&(E_7(a_5),111)&\chi_{350,38}&(E_8(a_6),111)\\
\chi_{1575,10}&(D_4(a_1){+}A_1,21)&\chi_{4480,16}&(E_8(a_7),5)&\chi_{1400,37}&(E_8(b_5),3)\\
\chi_{350,14}&(D_4(a_1){+}A_1,111)&\chi_{5670,18}&(E_8(a_7),41)&\chi_{1008,39}&(E_8(b_5),21)\\
\chi_{2268,10}&(A_4,2)&\chi_{1680,22}&(E_8(a_7),311)&\chi_{56,49}&(E_8(b_5),111)\\
\chi_{1400,11}&A_3{+}A_2{+}A_1&\chi_{4536,18}&(E_8(a_7),32)&\chi_{700,42}&(E_8(a_5),2)\\
\chi_{2240,10}&(D_4(a_1){+}A_2,2)&\chi_{1400,20}&(E_8(a_7),221)&\chi_{567,46}&E_7(a_1)\\
\chi_{840,13}&(D_4(a_1){+}A_2,11)&\chi_{70,32}&(E_8(a_7),2111)&\chi_{560,47}&(E_8(b_4),2)\\
\chi_{700,16}&D_4{+}A_1&\chi_{5600,21}&(D_6(a_1),2)&\chi_{84,64}&E_7\\
\chi_{840,14}&2A_3&\chi_{4200,21}&A_6&\chi_{210,52}&(E_8(a_4),2)\\
\chi_{2800,13}&(D_5(a_1),2)&\chi_{700,28}&(E_7(a_4),11)&\chi_{112,63}&(E_8(a_3),2)\\
\chi_{2100,16}&(D_5(a_1),11)&\chi_{2835,22}&A_6{+}A_1&\chi_{28,68}&(E_8(a_3),11)\\
\chi_{4200,12}&(A_4{+}2A_1,2)&\chi_{2800,25}&(E_6(a_1),2)&\chi_{35,74}&E_8(a_2)\\
\end{array}
\]

The pairs $E_8$, $E_8(a_1)$, $(E_8(b_4),11)$, $D_7$ and $(E_8(a_7), 11111)$ are missing, the latter being
the modular reduction of the characteristic zero cuspidal pair.

\def\cprime{$'$}

\end{document}